\newif\ifpersonal
\theoremstyle{plain}
\newtheorem{proposition}{Proposition}[section]
\newtheorem{lemma}[proposition]{Lemma}
\newtheorem{theorem}[proposition]{Theorem}
\newtheorem*{theorem*}{Theorem}
\newtheorem{corollary}[proposition]{Corollary}
\newtheorem*{corollary*}{Corollary}
\newtheorem{conjecture}[proposition]{Conjecture}
\theoremstyle{definition}
\newtheorem{definition}[proposition]{Definition}
\newtheorem{remark}[proposition]{Remark}
\newcommand*{\personal}[1]{\textcolor[rgb]{.2,.2,1}{\tiny{(Personal: #1)}}}
\newcommand*{\todo}[1]{\textcolor{red}{(Todo: #1)}}
\newcommand*{\personal}[1]{\ignorespaces}
\newcommand*{\todo}[1]{\ignorespaces}
				\string\usetikzlibrary{decorations.markings} to use arrows with markings}{}}{}%
\newcommand{\cM}{\mathcal M}
\newcommand{\bZ}{\mathbb{Z}}							  				
\newcommand{\bN}{\mathbb{N}}							  				
\newcommand{\bP}{\mathbb{P}}
\newcommand{\bQ}{\mathbb{Q}}
\newcommand{\bC}{\mathbb{C}}
\providecommand{\bP}{\mathbb{P}}
\newcommand{\one}{\mathbbm{1}}
\newcommand{\Mot}{{\cM ot}}
\newcommand{\Vect}{\mathrm{Vect}}
\newcommand{\CHM}{\mathrm{CHM}}
\newcommand{\oset}[3][0.2ex]{%
	\mathrel{\mathop{#3}\limits^{
			\vbox to#1{\kern-2\ex@
				\hbox{$\scriptstyle#2$}\vss}}}}
\tikzset{
  closed/.style = {decoration = {markings, mark = at position 0.5 with { \node[transform shape, xscale = .8, yscale=.4] {/}; } }, postaction = {decorate} },
  open/.style = {decoration = {markings, mark = at position 0.5 with { \node[transform shape, scale = .7] {$\circ$}; } }, postaction = {decorate} }
}
\DeclareMathOperator{\codim}{codim}
\DeclareMathOperator{\Hom}{Hom}
\DeclareMathOperator{\End}{End}
\DeclareMathOperator{\Ker}{Ker}
\DeclareMathOperator{\id}{\mathsf{id}}
\let\save@mathaccent\mathaccent
\newcommand*\if@single[3]{%
	\setbox0\hbox{${\mathaccent"0362{#1}}^H$}%
	\setbox2\hbox{${\mathaccent"0362{\kern0pt#1}}^H$}%
	\ifdim\ht0=\ht2 #3\else #2\fi
}
\newcommand*\rel@kern[1]{\kern#1\dimexpr\macc@kerna}
\newcommand*\widebar[1]{\@ifnextchar^{{\wide@bar{#1}{0}}}{\wide@bar{#1}{1}}}
\newcommand*\wide@bar[2]{\if@single{#1}{\wide@bar@{#1}{#2}{1}}{\wide@bar@{#1}{#2}{2}}}
\newcommand*\wide@bar@[3]{%
	\begingroup
	\def\mathaccent##1##2{%
		\let\mathaccent\save@mathaccent
		\if#32 \let\macc@nucleus\first@char \fi
		\setbox\z@\hbox{$\macc@style{\macc@nucleus}_{}$}%
		\setbox\tw@\hbox{$\macc@style{\macc@nucleus}{}_{}$}%
		\dimen@\wd\tw@
		\advance\dimen@-\wd\z@
		\divide\dimen@ 3
		\@tempdima\wd\tw@
		\advance\@tempdima-\scriptspace
		\divide\@tempdima 10
		\advance\dimen@-\@tempdima
		\ifdim\dimen@>\z@ \dimen@0pt\fi
		\rel@kern{0.6}\kern-\dimen@
		\if#31
		\overline{\rel@kern{-0.6}\kern\dimen@\macc@nucleus\rel@kern{0.4}\kern\dimen@}%
		\advance\dimen@0.4\dimexpr\macc@kerna
		\let\final@kern#2%
		\ifdim\dimen@<\z@ \let\final@kern1\fi
		\if\final@kern1 \kern-\dimen@\fi
		\else
		\overline{\rel@kern{-0.6}\kern\dimen@#1}%
		\fi
	}%
	\macc@depth\@ne
	\let\math@bgroup\@empty \let\math@egroup\macc@set@skewchar
	zen@everymath{\mathgroup\macc@group\relax}%
	\macc@set@skewchar\relax
	\let\mathaccentV\macc@nested@a
	\if#31
	\macc@nested@a\relax111{#1}%
	\else
	\def\gobble@till@marker##1\endmarker{}%
	\futurelet\first@char\gobble@till@marker#1\endmarker
	\ifcat\noexpand\first@char A\else
	\def\first@char{}%
	\fi
	\macc@nested@a\relax111{\first@char}%
	\fi
	\endgroup
}
\newcounter{giuseppe}
\newcounter{mattia}
\newcounter{robert}
\newcounter{giulia}
\newcommand{\GA}[1]{\stepcounter{giuseppe}{\small{\color{orange}\textbf{Giuseppe \thegiuseppe}: #1}}}
\newcommand{\isocan}{\xrightarrow{\hspace{1.85pt}\sim \hspace{1.85pt}}}
\newcommand{\Addresses}{{
		\bigskip
		\footnotesize

		Giuseppe Ancona, \textsc{IRMA, Strasbourg, France}\\ \nopagebreak
		\texttt{ancona@math.unistra.fr}
		
		\medskip
		Mattia Cavicchi, \textsc{Université Bourgogne Europe, CNRS, IMB UMR 5584, F-21000 Dijon, France}\\ \nopagebreak
		\texttt{mattia.cavicchi@ube.fr}
		
		\medskip
		Robert Laterveer, \textsc{IRMA, Strasbourg, France}\\ \nopagebreak
		\texttt{laterv@math.unistra.fr}
		
		\medskip
		Giulia Sacc\`a, \textsc{Columbia University, US}\\ \nopagebreak
		\texttt{giulia@math.columbia.edu}

}}
\numberwithin{equation}{section}
\title[Lefschetz standard conjecture for some Lagrangian fibrations]{Relative and absolute Lefschetz standard conjectures for some Lagrangian fibrations}
\author{Giuseppe Ancona}
\author{Mattia Cavicchi}
\author{Robert Laterveer}
\author{Giulia Sacc\`a}
\date{\today}
\begin{document}

	\maketitle
	\begin{abstract}
		We show that the hyper-K\"ahler varieties of OG10-type constructed by Laza--Sacc\`a--Voisin (LSV) verify the Lefschetz standard conjecture.
		This is an application of a more general result, stating that certain Lagrangian fibrations verify this conjecture. The main technical assumption of this general result is that the Lagrangian fibration satisfies the hypotheses of Ng\^o's support theorem. Verifying that the LSV tenfolds do satisfy those hypotheses is of independent interest. Another point of independent interest of the paper is the definition and the study of the Lefschetz standard conjecture in the relative setting, and its relation to the classical absolute case.
		\end{abstract}

			\tableofcontents
			
\thanks{\textit{2020 Mathematics Subject Classification:}  14C15, 14C25, 14C30}

\thanks{GA and MC are supported by ANR grant ANR--18--CE40--0017. RL is supported by ANR grant ANR--20--CE40--0023.  GS acknowledges support from NSF CAREER grant DMS-2144483 and NSF FRG grant DMS-2052750.}

	\newpage
\section{Introduction}

The standard conjectures, formulated by Grothendieck and Bombieri in the sixties, have been hugely influential in the development of the theory of motives and algebraic cycles \cite{GrothendieckBombay,Kleiman69,KleimanSeattle}. Yet, in spite of their pervasive presence\footnote{Grothendieck presented them as the most urgent task in algebraic geometry.}, not a great deal more is known about them nowadays than in the sixties.

In this paper we will work over the complex numbers. In this setting it is known that the Lefschetz standard conjecture implies all the other standard conjectures. Let us recall its formulation.
Let $L$ be an ample divisor on a smooth projective variety  $X$  of dimension $d_X$. The hard Lefschetz theorem says that the map 
\begin{align}\label{CSTL} \cup L^n :  H^{d_X-n}(X, \bQ  ) \isocan  H^{d_X+n}(X, \bQ  )  \end{align} 
is an isomorphism. The Lefschetz standard conjecture predicts that the inverse of this map is induced by an algebraic correspondence. This project is guided by the basic question: how well does this conjecture behave under fibration?

The starting point of this note is Voisin's paper \cite{Voi20}, where the Lefschetz standard conjecture in degree 2 is proved for HK manifolds which are covered by Lagrangian subvarieties. Particularly relevant for our paper is the case of Lagrangian fibrations: in this case, Voisin proves the conjecture in degree two, under the assumption that the SYZ conjecture holds. Here, we use Voisin's idea, extending it thanks to the use of Ng\^o's Support Theorem.

To explain how this theorem enters the picture, let us assume that $X$   is endowed with a surjective morphism $f : X \rightarrow B$ to a smooth projective variety $B$ (with $f$ non-constant and non-finite). We ask ourselves how much the Lefschetz standard conjecture for $B$ and for the smooth fibers may help to show the Lefschetz standard conjecture for $X$. 
 
 Let us be more precise and let us read this through the decomposition theorem. To ease notation, we will work with the constant complex $C_X=\bQ [d_X]$ on $X$ and write $H^n(X)=H(X,C_X)=H^{d_X+n}(X, \bQ  )$.
Moreover, let $\eta$ be a relatively ample divisor on $X$, $L_B$ an ample divisor on $B$ and  $\beta$ its pullback on $X$. 
The decomposition theorem gives  in particular a decomposition 
 \begin{align}\label{BBD}  H^n(X)=\bigoplus_{a+b=n} H^a(B, ^p\!\!R^{b}f_* C_X ),\end{align} 
 where  the $ ^p\!\!R^{b}f_* C_X$ denote the perverse cohomology objects of $Rf_* C_X$,
 as well as the following isomorphisms:
\begin{align}\label{BBDeta} \cup \eta^{b} :  H^a(B, ^p\!\!R^{ -b}f_* C_X ) \isocan H^a(B, ^p\!\!R^{ +b}f_* C_X  ), \end{align} 
\begin{align}\label{BBDbeta}  \cup \beta^{a} :  H^{  -a}(B, ^p\!\!R^{b}f_*C_X) \isocan H^{  +a}(B, ^p\!\!R^{b}f_* C_X  ).\end{align} 

The combination of \eqref{BBD}, \eqref{BBDeta} and \eqref{BBDbeta} may suggest that the Lefschetz standard conjecture can be reduced to showing that the inverses of \eqref{BBDeta} and \eqref{BBDbeta}  are algebraic. Additionally, at first sight one might think that an algebraic construction of the inverse of  \eqref{BBDeta} can be reduced to the Lefschetz standard conjecture for the smooth fibers of $f$ and similarly that an algebraic construction of the inverse of  \eqref{BBDbeta} can be reduced to the Lefschetz standard conjecture for $B$.
  However, a number of subtleties appear.
\begin{enumerate}
\item The perverse sheaf $^p\!R^{b}f_* C_X$ might have direct factors supported on the singular locus of $f$. In this case the information on the smooth fibers will give no control on these direct factors.
\item Inverting the action of $L_B$ on the cohomology of  $B$ is not enough for inverting  $\beta$. Indeed $f^* H(B)\subset H(X)$ coincides only with the factor $H^*(B, ^p\!\!R^{-d_X}f_* C_X )$ in the decomposition $\eqref{BBD}$. This is the most tragic point.
\item Even if one were able to construct $\Lambda_{ \eta^{b}}$ and $\Lambda_{\beta^{a}}$ acting as the inverses of  \eqref{BBDeta} resp. \eqref{BBDbeta}, it is unclear how to put all of them together and deduce an algebraic correspondence  $\Lambda_{n}: H^{ n}(X ) \isocan H^{ -n}(X  ).$ For example one could think of the formula $ \Lambda_{n} = \sum_{a+b=n}\Lambda_{ \eta^{b}}\circ \Lambda_{\beta^{a}}$ but such a sum could lead to cancellations. It is indeed  unclear how, for example, $\Lambda_{ \eta^{b}}$ acts on perverse degrees different from $b$. Already the actions of  $\eta$ and $\beta$ are not in general bigraded with respect to the decomposition \eqref{BBD}.
\end{enumerate}
  
  In the case where $X$ is a hyper-K\"ahler 
  variety (hence $f$ is a Lagrangian fibration by Matsushita \cite{Matsushita99}) one can partially bypass  these problems.
  \begin{enumerate}
\item Ng\^o's support theorem \cite{Ngo10} can be used to control the support of $^p\!R^{b}f_* C_X$ and avoid problem (1) in favorable cases (see \cref{relative SCLT HK}).
\item An idea of Voisin \cite{Voi20} can be used to (roughly speaking) construct a second Lagrangian fibration inverting the role of $\eta$ and $\beta$. (Hence problem (2) reduces to problem (1).)
\item A general result of Shen and Yin shows that the operators $\eta$ and $\beta$ are actually bigraded in the setting of Lagrangian fibrations \cite{SY21}.
\end{enumerate}
These ingredients allow us to prove the following general criterion. 
\begin{theorem}\label{thm intro}
Let $X$ be a hyper-K\"ahler variety admitting a Lagrangian fibration $f:X \rightarrow B$.
Assume that:
   \begin{enumerate}
\item The Picard rank of $X$ is $2$.
\item There is only one isotropic line in the Picard group which is in the boundary of the birational K\"ahler cone (the line generated by $\beta$).
\item The fibration $f$ is an Ng\^o fibration (see Definition \ref{def ngo}).
\item The fibers of $f$ are irreducible.
\end{enumerate}
Then the Lefschetz standard conjecture holds for $X$.
\end{theorem}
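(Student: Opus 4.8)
The plan is to deduce the Lefschetz standard conjecture for $X$ from the algebraicity of two Lefschetz-type operators attached separately to $\eta$ and to $\beta$, using the three structural inputs described above. After rescaling $\beta$ we may assume that $L:=\eta+\beta$ is an ample class on $X$ (recall $\eta$ is relatively ample and $\beta=f^*L_B$ with $L_B$ ample). By Kleiman's results \cite{Kleiman69} the Lefschetz standard conjecture for $X$ is independent of the chosen ample class, and is equivalent to the algebraicity of the dual Lefschetz operator $\Lambda_L$ on $H^*(X)$ associated with $\cup L$. Now invoke the theorem of Shen--Yin \cite{SY21}: with respect to the decomposition \eqref{BBD}, the operators $\cup\eta$ and $\cup\beta$ are strictly bigraded, $\cup\eta$ raising the perverse index by $2$ and $\cup\beta$ raising the base index by $2$. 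Combined with the hard Lefschetz isomorphisms \eqref{BBDeta} and \eqref{BBDbeta}, each of them therefore underlies an $\fsl_2$-triple $(\eta,h_\eta,\Lambda_\eta)$, resp.\ $(\beta,h_\beta,\Lambda_\beta)$, and, since the two triples act in independent directions, $[\eta,\Lambda_\beta]=[\beta,\Lambda_\eta]=0$. Hence $(\eta+\beta,\ h_\eta+h_\beta,\ \Lambda_\eta+\Lambda_\beta)$ is again an $\fsl_2$-triple; as its semisimple element $h_\eta+h_\beta$ acts on the summand $H^a(B,{}^p\!R^bf_*\mathcal C)$ by $a+b$, it is (up to the usual centering) the standard hard Lefschetz grading operator for $(X,L)$, so by uniqueness $\Lambda_L=\Lambda_\eta+\Lambda_\beta$. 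This settles subtlety $(3)$ of the introduction, and reduces the theorem to proving that $\Lambda_\eta$ and $\Lambda_\beta$ are algebraic.

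For $\Lambda_\eta$ --- a ``relative'' Lefschetz standard conjecture for $f$ --- I would first treat the smooth locus and then propagate to $B$. Over the dense open $U\subseteq B$ over which $f$ is smooth, $f_U\colon X_U\to U$ is (a torsor under) an abelian scheme, since by hypothesis $(4)$ the fibres are irreducible, and $\eta$ restricts to a relatively ample class; by the work of Deninger--Murre and K\"unnemann on the motivic decomposition and relative Fourier transform of abelian schemes, the inverses of the relative hard Lefschetz isomorphisms for $(f_U,\eta)$ are induced by an algebraic correspondence on $X_U\times_U X_U$. On the other hand, hypothesis $(3)$ says $f$ is an Ng\^o fibration, so Ng\^o's support theorem forces each perverse sheaf ${}^p\!R^bf_*\mathcal C$ to have full support $B$; thus ${}^p\!R^bf_*\mathcal C=\mathrm{IC}_B(\mathcal L_b)$ for the local system $\mathcal L_b$ coming from $f_U$, there are no further summands (so subtlety $(1)$ of the introduction does not occur), and $H^*(X)=\bigoplus_{a,b}H^a\big(B,\mathrm{IC}_B(\mathcal L_b)\big)$. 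Since all supports are full, the generic relative Lefschetz datum extends uniquely to $B$, and one checks --- using the full-support conclusion, not merely the decomposition theorem --- that the closure in $X\times X$ of K\"unnemann's correspondence, which itself extends across the discriminant following Arinkin's theory of the Poincar\'e sheaf on compactified Jacobians, realises $\Lambda_\eta$ on all of $H^*(X)$. Hence $\Lambda_\eta$ is algebraic.

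The operator $\Lambda_\beta$ is the heart of the matter. On the bottom summand $H^*(B,{}^p\!R^{-d_X}f_*\mathcal C)=f^*H^*(B)$ the class $\beta$ acts as the Lefschetz operator of the base $B\cong\bP^{d_B}$ (Matsushita--Hwang), so there it is trivially algebraic; the obstruction --- subtlety $(2)$ of the introduction, ``the most tragic point'' --- is the action of $\beta$ on the higher perverse summands, where the Lefschetz standard conjecture for $B$ gives no leverage. Here the plan is to run Voisin's idea from \cite{Voi20}: hypotheses $(1)$ and $(2)$ --- Picard rank $2$, with a unique isotropic ray $\bQ_{\geq 0}\beta$ on the boundary of the birational K\"ahler cone --- are exactly what is needed to produce, by a hyper-K\"ahler rotation/duality argument along $f$, a second Lagrangian fibration in which $\beta$ takes on the role that $\eta$ plays for $f$. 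Feeding this new fibration into the argument of the previous paragraph --- in particular applying Ng\^o's support theorem to it, which is precisely how subtlety $(2)$ is traded for subtlety $(1)$ --- yields that $\Lambda_\beta$ is algebraic. Putting the three pieces together, $\Lambda_L=\Lambda_\eta+\Lambda_\beta$ is algebraic, and the Lefschetz standard conjecture for $X$ follows.

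I expect the main obstacle to be the construction of the ``second Lagrangian fibration'' underlying the treatment of $\Lambda_\beta$: one has to pin down exactly which fibration (on which birational model, or after which rotation of the complex structure) hypotheses $(1)$ and $(2)$ produce, verify that it is again an Ng\^o fibration so that Ng\^o's theorem applies to it, and check that the algebraic operator extracted from it really induces $\Lambda_\beta$ on every perverse summand and not just over the generic point of $B$. A subsidiary but still genuine technical point --- already present for $\Lambda_\eta$ --- is the passage from a relative algebraic correspondence over $U$ to a global self-correspondence of $X$ inducing the right operator on all of $H^*(X)$; the role of Ng\^o's support theorem there is to guarantee, through the absence of boundary-supported summands, that this passage introduces no spurious contributions.
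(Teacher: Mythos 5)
Your reduction $\Lambda_L=\Lambda_\eta+\Lambda_\beta$ via the commuting $\fsl_2$-triples coming from Shen--Yin is a reasonable observation, and your treatment of $\Lambda_\eta$ (generic abelian fiber plus full support from Ng\^o) is in the spirit of the paper's \cref{criterion SCLT} and \cref{relative SCLT HK}. But the heart of your plan for $\Lambda_\beta$ rests on a misreading of hypothesis (2): the assumption that the line of $\beta$ is the \emph{only} isotropic line on the boundary of the birational K\"ahler cone means precisely that there is \emph{no} second Lagrangian fibration on any birational model --- the other boundary ray corresponds to a divisorial contraction (there is a prime exceptional divisor), and a hyper-K\"ahler rotation leaves the algebraic category and cannot produce algebraic correspondences. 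So the step ``produce a second Lagrangian fibration in which $\beta$ plays the role of $\eta$ and rerun the argument'' cannot even start under the stated hypotheses; it is the strategy for the other branch of the dichotomy of \cref{trick voisin} (both rays potentially Lagrangian, assuming SYZ, i.e.\ \cref{thm intro2}). In the present case the paper instead uses Voisin's observation that, by Huybrechts--Riess, there is an algebraic automorphism $\gamma$ of the motive $\mathfrak{h}(X)$ commuting with cup product and exchanging $\eta$ and $\beta$ up to scalar, and then exploits the uniqueness of the Shen--Yin spaces $V^{i,j}$ (\cref{cor unicity}) to show that conjugation by $\gamma$ carries the $\eta$-side data to the $\beta$-side data ($\gamma^{-1}V^{i,j}=V^{j,i}$, see the proof of \cref{main thm}). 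Your scheme could be repaired along these lines (up to scalar, $\Lambda_\beta=\gamma^{-1}\circ\Lambda_\eta\circ\gamma$), but as written the decisive idea is missing.

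A second, independent gap is the sentence asserting that the closure of the correspondence defined over the smooth locus ``realises $\Lambda_\eta$ on all of $H^*(X)$.'' This is not a routine check but the main technical content of the paper's Sections 4--6: the extended correspondence only inverts relative hard Lefschetz in the prescribed perverse degree (checked via Schur's lemma for intersection complexes, using full support), and its action in the other perverse degrees is uncontrolled; to obtain operators and projectors on $H^*(X)$ adapted to the decomposition one needs the upper-triangularity and power-stabilization construction of motivic projectors in \cref{equiv SCLT}, together with the fact that the Shen--Yin decomposition is a \emph{good} decomposition so that these projectors descend to the absolute cohomology (\cref{motivic good decomposition}, \cref{decomp eta}). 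The tools you invoke do not supply this: Arinkin's Poincar\'e sheaf concerns compactified Jacobians of integral planar curves and is neither available for a general Lagrangian fibration satisfying (3)--(4) (e.g.\ the LSV Prym fibration) nor what is needed, and K\"unnemann's relative Fourier transform over the smooth locus does not by itself control the extension across the discriminant. So even the $\Lambda_\eta$ half of your argument needs the relative-motive/projector machinery (or an equivalent substitute) to be complete.
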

    The main application is the following. Some other applications will be given in the text.
  \begin{theorem}  \label{thm LSV intro} Let $M$ be a smooth cubic fourfold, and let $X$ be the associated Lagrangian fibered hyper-K\"ahler manifold constructed in \cite{LSV17,Voisin-twisted,Sacca}. Then $X$ satisfies the standard conjectures. 
  \end{theorem}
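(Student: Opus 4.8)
The plan is to deduce Theorem~\ref{thm LSV intro} from the general criterion of Theorem~\ref{thm intro}, applied to the LSV tenfold $X=X_M$, after a preliminary reduction to the case where $M$ is very general. The reduction is needed because hypotheses~(1) and~(2) of Theorem~\ref{thm intro} fail for cubic fourfolds lying on a Noether--Lefschetz locus, so the criterion cannot be applied directly to an arbitrary smooth $M$. To circumvent this, recall that the manifolds $X_M$ fit into a smooth projective family $\pi\colon\mathcal{X}\to\mathcal{U}$ over the connected parameter space $\mathcal{U}$ of smooth cubic fourfolds, equipped with a relatively ample class restricting to the natural polarisation on each $X_M$ (this is part of \cite{LSV17,Voisin-twisted,Sacca}). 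The Lefschetz standard conjecture is deformation invariant in such a family: the Lefschetz operator on $X_M$ is the realisation of a flat section of the local system $R^{2d_X}(\pi\times\pi)_*\bQ$, so an algebraic correspondence inducing it on one member, once spread out, induces it on all nearby members, and a connectedness argument then covers all of $\mathcal{U}$; alternatively, the correspondence produced by the proof of the criterion --- through the support theorem, Voisin's construction and the Shen--Yin bigrading --- can be built relatively over the family. Hence it suffices to prove the conjecture for $X=X_M$ with $M$ very general, and since over $\bC$ the Lefschetz standard conjecture for $X$ implies all the remaining standard conjectures for $X$, it is enough to check hypotheses~(1)--(4) of Theorem~\ref{thm intro} for such an $X$.

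So fix a very general smooth cubic fourfold $M$ and let $f\colon X\to B$ be the associated Lagrangian fibration, where $B\cong\bP^5$ parametrises the hyperplane sections of $M$, $X$ is of OG10 type and $b_2(X)=24$. For hypothesis~(1): by the description of $H^2(X)$ in \cite{LSV17,Sacca}, its transcendental part is Hodge-isometric, up to a Tate twist, to the primitive cohomology $H^4_{\mathrm{prim}}(M,\bQ)$, which for $M$ very general carries no nonzero Hodge class; hence $\mathrm{NS}(X)$ has rank $24-22=2$, spanned by $\beta=f^*\mathcal{O}_B(1)$ and a relatively ample divisor $\eta$. For hypothesis~(2): by Matsushita's theorem $\beta$ is isotropic for the Beauville--Bogomolov form $q$, and since $\int_X\beta^5\eta^5\neq 0$ the Fujiki relations give $q(\beta,\eta)\neq 0$; together with $q(\beta)=0$ this forces $q$ to be non-degenerate of signature $(1,1)$ on $\mathrm{NS}(X)_\bR$, so the positive cone has exactly two isotropic boundary rays, one of which is $\bR_{\ge 0}\beta$. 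One then has to show that the other isotropic ray lies outside the closure of the birational K\"ahler cone: this is a lattice-theoretic verification with the explicit rank-two Gram matrix of $\mathrm{NS}(X)$ combined with the known description of the movable cone of OG10-type manifolds (the classification of wall classes and of the monodromy group), exhibiting a wall that separates $\beta$ from the other isotropic ray. For hypothesis~(4): the generic fibre of $f$ is the intermediate Jacobian of a smooth cubic threefold, an irreducible principally polarised abelian fivefold, and by the explicit description of the degenerate compactified intermediate Jacobians over the dual variety $M^\vee\subset B$ in \cite{LSV17,Sacca} every fibre of $f$ is irreducible.

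Hypothesis~(3), that $f$ is an Ng\^o fibration in the sense of Definition~\ref{def ngo}, is the geometric core of the argument, and I expect it to be the main obstacle. Over $U=B\setminus M^\vee$ the morphism $f$ is the abelian scheme $H\mapsto \mathrm{IJ}(M\cap H)$ of intermediate Jacobians, and $X\to B$ is its compactification of \cite{LSV17,Voisin-twisted,Sacca}. To be able to invoke Ng\^o's support theorem \cite{Ngo10} one must: (a) produce a smooth commutative group scheme $P\to B$ acting on $X\to B$ with affine stabilisers and with the torus parts of its fibres underlying a polarisable local system, so that $f$ becomes a weak abelian fibration --- here one uses the realisation of intermediate Jacobians of cubic threefolds as Prym varieties and its extension across the discriminant; and (b) verify Ng\^o's $\delta$-regularity, i.e.\ that the locus in $B$ over which the affine part of the degenerate Jacobian has dimension $\geq\delta$ has codimension $\geq\delta$. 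For~(b) one stratifies $B$ according to the singularity type of the cubic threefold $M\cap H$, computes the $\delta$-invariant of the corresponding degenerate intermediate Jacobian on each stratum, and compares it with the codimension of that stratum in $\bP^5$; since $M$ is very general its hyperplane sections have controlled singularities, with the more degenerate types occurring only in suitably high codimension, and the required inequalities should follow from the classification of singular cubic threefolds together with dimension counts on $B$.

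Granting hypotheses~(1)--(4) for very general $M$, Theorem~\ref{thm intro} yields the Lefschetz standard conjecture for $X_M$; by the deformation argument of the first paragraph this propagates to $X_M$ for every smooth cubic fourfold $M$, and hence all the standard conjectures hold for $X_M$. The step that genuinely requires new work --- and which the abstract flags as being of independent interest --- is~(3): controlling the degenerations of the intermediate Jacobians of the singular hyperplane sections of $M$ precisely enough to confirm $\delta$-regularity and thereby bring Ng\^o's support theorem into play.
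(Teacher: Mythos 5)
Your overall strategy coincides with the paper's: reduce to a very general cubic fourfold, then feed the resulting Picard-rank-two LSV tenfold into the general criterion, with the Ng\^o-fibration property as the geometric core. Two points, however, need repair. The first is the reduction itself: the Lefschetz standard conjecture is not known to be deformation invariant, so the argument ``an algebraic correspondence inducing it on one member, once spread out, induces it on all nearby members, and a connectedness argument then covers all of $\mathcal{U}$'' is not valid as stated --- spreading out from an arbitrary special member is exactly the open direction. What is available, and what the paper uses, is specialization: one proves the conjecture for the very general member (equivalently, the geometric generic fibre of the family) and uses that the Lefschetz standard conjecture passes to specializations; since every LSV tenfold is a specialization of a general one, this gives the needed implication, but the justification must be phrased this way. (Also, for hypothesis (2) of Theorem \ref{thm intro} you only sketch a lattice/movable-cone computation; the paper instead quotes the known existence of an effective divisor $\Theta$ with $q(\Theta)<0$, so that there is a unique Lagrangian class \cite[Corollary 3.10]{Sacca} and one is in the second case of Remark \ref{trick voisin}.)

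The more serious issue is that your plan for hypothesis (3) misidentifies where the difficulty lies. $\delta$-regularity is not the obstacle: for a Lagrangian fibration it is automatic, either from the existence of the symplectic form (\cite[Section 2]{Ngo2}) or, in the paper's setup, because the group scheme $P\to B$ obtained via \cite{AF16} is part of a degenerate abelian scheme, whose definition already encodes $\delta$-regularity; affine stabilizers come from the same source. So the stratification of $B$ by singularity type of hyperplane sections and the dimension counts you propose are unnecessary, and carrying them out would in any case not address the real problem. The genuinely hard condition --- which your proposal mentions only in passing (``polarisable local system \dots\ one uses the Prym realisation and its extension across the discriminant'') --- is the polarizability of the Tate module of $P$. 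The paper's proof of this (Lemma \ref{polarizability}) is the substantive new content: one constructs Ng\^o's Weil pairing on the Tate module of the relative Jacobian of the family of discriminant quintic curves over the space of very good lines, shows that its restriction to the relative Prym is still a polarization (vanishing on the affine part, and nondegeneracy on the abelian part via the adjointness of $\mathrm{Nm}_{\epsilon}$ and $\epsilon^{*}$ and the resulting orthogonal splitting of the Tate module of the normalized curve), and then descends the pairing from $\mathcal{F}^{0}$ to $B$ using full faithfulness of pullback along the smooth surjective morphism $\mathcal{F}^{0}\to B$ with connected fibres. None of this appears in your proposal, so the key step of the argument is missing; as written, the proof would not go through.
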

In the same spirit as  \cite{Voi20}, we give a generalization of Theorem \ref{thm intro}.  
\begin{theorem}\label{thm intro2}
Let $X$ be a hyper-K\"ahler variety admitting a Lagrangian fibration $f:X \rightarrow B$.
Assume that:
   \begin{enumerate}
\item The Picard rank of $X$ is $2$.
\item The SYZ conjecture holds for $X$.
\item The Lagrangian fibrations on any birational model of $X$  are Ng\^o fibrations.
\item The fibers of these fibrations are irreducible.
\end{enumerate}
Then the Lefschetz standard conjecture holds for $X$.
\end{theorem}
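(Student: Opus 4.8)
The plan is to run the strategy behind Theorem~\ref{thm intro}, using the SYZ conjecture to supply the auxiliary Lagrangian fibration that, under the more restrictive hypotheses of that theorem, is available for free. Since $\Pic(X)$ has rank $2$ and contains the isotropic class $\beta$, the Beauville--Bogomolov--Fujiki form on $\Pic(X)_{\bR}$ is non-degenerate of signature $(1,1)$, hence has exactly two isotropic rays, one of them $\bR_{\ge 0}\,\beta$; call the other $\bR_{\ge 0}\,\beta'$. Applying the SYZ conjecture to $\beta'$ produces a hyper-K\"ahler birational model $\phi\colon X\dashrightarrow X'$ with a Lagrangian fibration $f'\colon X'\to B'$ such that $\beta'$ is the pull-back to $X'$ of an ample class on $B'$. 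As any birational map of hyper-K\"ahler varieties is an isomorphism in codimension one, the closure of the graph of $\phi$ is an algebraic cycle inducing an isomorphism $H^{*}(X)\isocan H^{*}(X')$ of graded $\bQ$-Hodge structures respecting cup products (a standard fact for birational hyper-K\"ahler varieties); I use it tacitly to identify $H^{*}(X)=H^{*}(X')$ and $\Pic(X)_{\bR}=\Pic(X')_{\bR}$ and to transport algebraic correspondences between $X$ and $X'$.

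The geometric key is that $\beta'$ is relatively ample for $f$ and, symmetrically, $\beta$ is relatively ample for $f'$. Indeed no curve can be contracted by both $f$ and $f'$: its class would then be orthogonal to $\beta$ and to $\beta'$, hence to all of $\Pic(X)_{\bR}$, which is impossible since curves pair positively with ample divisors. Thus $\beta=f^{*}L_B$ has strictly positive degree on every curve contracted by $f'$, and so restricts to an ample class on the fibers of $f'$; likewise $\beta'$ on the fibers of $f$. Now $f$ on $X$ and $f'$ on $X'$ are Ng\^o fibrations with irreducible fibers by hypotheses~(3)--(4), so the relative Lefschetz standard conjecture established in the text applies to both; since via Ng\^o's support theorem it reduces to the standard conjectures for the abelian-variety fibers --- which hold for \emph{every} polarization --- we may take the relative polarizations to be $\beta'$ for $f$ and $\beta$ for $f'$, obtaining that the inverses of the relative hard Lefschetz isomorphisms~\eqref{BBDeta} for $(f,\beta')$ and for $(f',\beta)$ are each induced by an algebraic self-correspondence of $X$.

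It remains to assemble a Lefschetz operator. By Shen--Yin, $\cup\beta'$ and $\cup\beta$ are bigraded for the perverse decomposition~\eqref{BBD} of $f$ (with $\beta'$ as relative polarization) and generate a copy of $\mathfrak{sl}_2\times\mathfrak{sl}_2$ acting on $H^{*}(X)$; the same construction run with $f'$ yields another such structure with the same raising operators $\cup\beta$ and $\cup\beta'$. A pair of commuting nilpotent operators admits at most one $\mathfrak{sl}_2\times\mathfrak{sl}_2$-structure with prescribed raising operators, so the two coincide; hence the lowering operator $\Lambda_{\beta}$ of $\cup\beta$, which from the $f$-picture is the inverse of~\eqref{BBDbeta} --- the ``tragic'' hard Lefschetz inverse along the base $B$ --- is, from the $f'$-picture, the inverse of relative hard Lefschetz for $(f',\beta)$, hence algebraic; symmetrically $\Lambda_{\beta'}$ is algebraic. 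Finally, every ample class on $X$ lies in $\Pic(X)_{\bR}=\langle\beta,\beta'\rangle_{\bR}$, and being relatively ample for both fibrations it has the form $A=s\beta+t\beta'$ with $s,t>0$; then $\Lambda_A:=s^{-1}\Lambda_{\beta}+t^{-1}\Lambda_{\beta'}$ completes $\cup A$ to an $\mathfrak{sl}_2$-triple whose semisimple element is $h_{\beta}+h_{\beta'}$ --- the cohomological degree operator, by Shen--Yin's normalisation --- the cross-brackets vanishing because the two $\mathfrak{sl}_2$-factors commute. Thus $\Lambda_A$ is the Lefschetz operator of $A$ and is algebraic, and by Kleiman's equivalences the Lefschetz standard conjecture holds for $X$.

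The step I expect to be most delicate is the penultimate one: that Shen--Yin's $\mathfrak{sl}_2\times\mathfrak{sl}_2$ is genuinely determined by its raising operators --- so that ``$\Lambda_{\beta}$ computed via $f$'' and ``via $f'$'' are literally the same correspondence --- and that $h_{\beta}+h_{\beta'}$ is exactly the cohomological degree operator; both rest on the rigidity of $\mathfrak{sl}_2\times\mathfrak{sl}_2$-representations together with Shen--Yin's normalisation of the perverse gradings, and need to be spelled out. A secondary point requiring care is the passage through the birational model $X'$: that $f'$ exists as an honest morphism on a genuine hyper-K\"ahler model (the content of SYZ) and that the graph cycle of $\phi$ transports the perverse decompositions and preserves algebraicity --- and it is precisely here that hypothesis~(3), imposed on \emph{all} birational models of $X$, is used.
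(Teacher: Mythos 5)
There is a genuine gap, and it is right at the first step: you apply the SYZ conjecture to the second isotropic ray $\bR_{\geq 0}\beta'$ to produce a second Lagrangian fibration. But the SYZ conjecture (Conjecture \ref{conj SYZ}) only asserts that \emph{potentially Lagrangian} classes are Lagrangian, i.e.\ isotropic classes lying \emph{on the boundary of the birational K\"ahler cone}. With Picard rank $2$ there are indeed exactly two isotropic lines, but the second one need not lie on that boundary: it may instead correspond to a birational (divisorial) contraction, which happens precisely when there is a prime exceptional divisor $\Theta$ with $q(\Theta)<0$. In that case SYZ gives you nothing and there is no second Lagrangian fibration on any birational model, so your entire construction of $f'$, and everything downstream of it, collapses. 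This is not a marginal case: the paper's main application, the LSV tenfolds, falls exactly into it (the proof of Theorem \ref{thm LSV} invokes the existence of such a $\Theta$). The paper handles this via Voisin's trick (Remark \ref{trick voisin}): in the second case one uses the Huybrechts--Riess isomorphisms to produce an automorphism $\gamma$ of the motive $\mathfrak{h}(X)$ commuting with cup product and exchanging $\eta$ and $\beta$ up to scalar, and then conjugates the (motivic, by Proposition \ref{decomp eta}) good decomposition for $(f,\eta)$ by $\gamma$ to obtain the missing second decomposition, using the uniqueness of the Shen--Yin spaces $V^{i,j}$ (Remark \ref{cor unicity}). Your proposal, at best, covers the first case of the dichotomy, which is essentially Proposition \ref{decomp two fibrations}.

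Two secondary points in the part you do treat. First, the correspondence furnished by the relative Lefschetz standard conjecture only inverts $\cup\eta^n$ on ${}^p\!R^{-n}f_*C_X$, with no control in other perverse degrees; turning this into a genuine algebraic lowering operator on absolute cohomology, compatible with a chosen splitting, is exactly the content of Propositions \ref{equiv SCLT} and \ref{motivic good decomposition} (via the good decomposition of de Cataldo and Shen--Yin), and your proposal assumes it rather than proves it. Second, your rigidity claim that a pair of commuting nilpotents admits at most one $\mathfrak{sl}_2\times\mathfrak{sl}_2$-structure with prescribed raising operators is false as stated (Jacobson--Morozov gives uniqueness only after fixing the semisimple elements, i.e.\ the gradings); it can be repaired by fixing the canonical Shen--Yin bigrading, but as written the step ``$\Lambda_\beta$ computed via $f$ and via $f'$ coincide'' is not justified. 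The paper sidesteps this assembly problem altogether: it shows the Shen--Yin decomposition is motivic with self-dual factors, hence the K\"unneth decomposition is motivic with self-dual factors, which is an equivalent formulation of the Lefschetz standard conjecture, rather than constructing $\Lambda_A$ by hand.
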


  Let us comment  on the above theorems, their assumptions, their applications and their possible generalizations.
  
  Assumption (1) is very reasonable. Indeed,  by \cite{Matsushita-def} and local Torelli any hyper-K\"ahler variety admitting a Lagrangian fibration is the specialization of a Picard rank $2$ hyper-K\"ahler admitting a Lagrangian fibration and, furthermore,  the Lefschetz standard conjecture for a variety implies the conjecture for all of its specializations.
  
  If assumption (1) holds, then there are exactly two isotropic lines with respect to the Beauville--Bogomolov form. One, namely the line generated by the lagrangian class $\beta$, is certainly on the boundary of the birational K\"ahler cone. As far as the second isotropic line is concerned, there are two cases, depending on whether it  is also on this boundary. Theorems \ref{thm intro} and \ref{thm intro2} correspond precisely to these two cases. If the other line is also on this boundary, then by the SYZ conjecture there is a second Lagrangian fibration associated with it. Notice that this conjecture has been proved for all known deformation types of hyper-K\"ahler varieties. When there is only one line there is no second fibration, since the second ray of the birational K\"ahler cone corresponds to a birational contraction. There is, however, a second fibration up to an automorphism of the motive, as was observed by Voisin \cite{Voi20}.
  
The restrictive assumptions are (3) and (4). Note that in \cite{Voi20}, this result is proved in degree $2$ without these two assumptions on the Lagrangian fibration, since singular fibers end up not mattering for the degree $2$ case.
Assumption (3) is needed to apply Ng\^o's support theorem and assumption (4) is needed to deduce through Ng\^o's support theorem that the perverse sheaves $^p\!R^{b}f_*C_X$ have full support. For the LSV tenfolds it takes some work to verify assumption (3); this part of the paper (Section \ref{S:LSV}) is of independent interest. 
  
Despite the recent growth of interest in the use of  Ng\^o's support theorem for Lagrangian fibrations, not many are known to be  Ng\^o fibrations (i.e. to satisfy the assumptions required to implement the support theorem). It is a folklore conjecture that all Lagrangian fibrations are Ng\^o fibrations (see \cite{AF} for a recent progress). Hence, we expect that in the future our methods will be applied to more general Lagrangian fibrations.

  
  Our strategy is to first deal with a relative version of the Lefschetz standard conjecture and then to pass from the relative to the classical absolute case. This relative version is new and contains some subtleties with respect to the absolute one. The correct setting for this relative version is a category of relative homological motives as defined in \cite{CoHa}. This category seems to be the correct one for dealing with problems such as the construction of algebraic classes, but of course it is very far from the conjectural abelian category of relative pure motives.
   The systematical study of the relative setting  seems interesting in its own right. In a sequel to this paper we will  give a stratified version of these constructions to deal with the case when the  perverse sheaves $^p\!R^{b}f_*C_X$ have direct factors strictly supported on the base. This will allow us to get rid of assumption (4), leading to even more applications.  
  
To end this introduction let us mention that, with very different methods, Charles and Markman proved the Lefschetz standard conjecture for hyper-Kähler
varieties of {\em K3$^{[n]}$-type} \cite{ChMa}, and more recently Foster proved it for those of Kummer type in some degrees \cite{Fost}. Together with \cite{Voi20} mentioned above, where, in addition to the case of Lagrangian fibrations, the degree two case of the Lefschetz standard conjecture is proved for codimension one families of projective hyper-Kähler fourfolds that are swept by Lagrangian subvarieties, these are among the rare previous results on this conjecture and not only in the realm of hyper-Kähler varieties. 
  

\subsection*{Organization of the paper}	
Section \ref{S:conventions} gives the general conventions and contains in particular  the construction of the category of relative homological motives.
In Section \ref{S:main results} we state the main results and recall the basic definitions needed.
Section \ref{S:relative Lef} gives several equivalent formulations of the relative Lefschetz standard conjecture; we also highlight the subtleties which appear in this relative setting and not in the absolute one.
In Section \ref{S:good decomp} we recall the notion of good decomposition of cohomology due to de Cataldo \cite{DeCataldo}. In the presence of such a good decomposition the subtleties explained in the previous section disappear. In general, the decompositions induced by the decomposition theorem are not good, but it turns out that they are so for Lagrangian fibrations. This is a particular instance of a result of Shen and Yin \cite{SY21} and it is explained in Section \ref{section SY}. The second tool is Ng\^o support theorem. We recall it for the convenience of the reader in Section \ref{section waf} and explain some applications.
In Section \ref{section voisin} we recall the argument of Voisin that allows us to reason as if there were a second Lagrangian fibration, and use this to conclude the proof of the main result, Theorem \ref{main thm} (Theorem \ref{thm intro2} in the introduction, of which Theorem \ref{thm intro} is a special case).
The long Section \ref{S:LSV} is dedicated to LSV tenfolds; the aim is to deduce Theorem \ref{thm LSV} (Theorem \ref{thm LSV intro} in the introduction) from our main result. The first subsection recalls their geometry, the second proves the polarizability of the underlying group scheme and the third concludes the proof of Theorem \ref{thm LSV}.

\subsection*{Acknowledgments}	
We thank Salvatore Floccari, Lie Fu, Emanuele Macr\`{\i}, Claire Voisin and Qizheng Yin for their interest in our work and for numerous fruitful discussions. We also thank the referee for constructive comments that helped to improve the paper.

 
\section{Conventions}\label{S:conventions}

Throughout the paper we will work with the following notation and conventions.

\begin{enumerate}

\item \textbf{Varieties.} We work with algebraic varieties over fields of characteristic zero, and most of the time with algebraic varieties over the complex numbers.
Whenever a field of definition different from $\bC$ appears (for example the residue field of the generic point of a variety) standard techniques allow us to reduce to $\bC$ (take a model and embed the field of definition).

 \item \textbf{Rational coefficients.} All the invariants we consider (Chow rings, cohomology,$\ldots$) will be with rational coefficients.

\item \textbf{Cohomology.} For a variety $X$ of dimension $d_X$ we  denote by $C_X=\bQ[d_X] $ the constant sheaf concentrated in degree $-d_X$. We  write $H^n(X)$ for $H^n(X,C_X)$. This is the usual singular cohomology with rational coefficients with degrees shifted so that the Poincar\'e duality is centered in zero.

\item \textbf{Derived categories.} Fix a base variety $B$, and consider either the analytic topology on $B(\bC)$, or the étale topology on $B$. We will denote by $D(B)$ the derived category of sheaves over $B$ for either one of these two topologies, and by $D^b(B)$ its full subcategory of bounded objects. We will use $\mathbb{Q}$-coefficients in the analytic case, and $\overline{\mathbb{Q}}_{\ell}$-coefficients, for $\ell$ a prime, in the étale case. The context will make clear which category we are using. We will consider either the \emph{classical} or the \emph{perverse} $t$-structure on $D(B)$ (see \cite{BBD} for generalities). For an integer $n \in \bZ$, the classical cohomology objects of an object $K$ in $D(B)$ will be denoted by $H^nK$. If $f: Z \rightarrow B$ is a morphism and $K$ a complex over $Z$, write $F$ for either one of the functors $f_*, f_!$. We will denote by $R^n F(K)$ the classical cohomology objects of the complex $RF(K)$, and by $^p\! R^n F(K)$ its perverse cohomology objects.
 
 \item \textbf{HK.} Our center of interest will be (projective) hyper-K\"ahler varieties. Such a variety will be called a HK variety or simply a HK. 

\item \textbf{Motives.} We will work with the category $\Mot$ of homological motives defined over $\bC$ and  with rational coefficients.
The category comes equipped with a realization functor
\[R : \Mot  \longrightarrow \Vect. \]
 to the category of rational vector spaces.
(The latter are endowed with a Hodge structure, which we will not need.)
For generalities see \cite[\S 4]{Andmot}.

\item \textbf{Tate twists.} \label{Tate twists} 
We will make the brutal choice  of fixing an isomorphism $\bQ \cong \bQ(1)$ in cohomology, hence ignoring the Tate twists.
This is done so that the operation of cup-producing with a given algebraic class is an endomorphism of the cohomology ring. 
See the discussion in \cite[5.2.3]{Andmot}.

However, we will keep track of the Tate twists for motives. Hence the realization of the motive $\one(1)$ is the complex $\bQ (1)[2]$, which by our convention is identified with the constant complex $\bQ$ in degree $-2$.

\item \textbf{Relative motives.} 
Following Corti--Hanamura \cite{CoHa} (cf. also \cite[Chapter 8]{MNP}), there is a category $\CHM(B)$ of relative Chow motives over a base $B$. Whenever $B$ is projective, there is a commutative square of functors
 \[ \begin{tikzcd}
\CHM(B) \arrow[d] \arrow[r,"R_B"]& D^b(B) \ar[d,"H^*"]\\
\CHM(\bC) \ar[r,"R"] & \Vect.
\end{tikzcd}
 \]
  Note that the horizontal functors are not   faithful. To fix this, we work up to homological equivalence. This means we can define $\Mot(B)$ as (the pseudo-abelian envelop of) the quotient of the motivic categories with respect to the tensor ideal given by the kernel of the realization functors. 
By construction, this gives  the commutative diagram
 \[ \begin{tikzcd}
\Mot(B) \arrow[d] \arrow[r,"R_B"]& D^b(B) \ar[d,"H^*"]\\
\Mot \ar[r,"R"] & \Vect.
\end{tikzcd}
 \]
 where the horizontal functors are now faithful by construction.
 

 	\end{enumerate}
	
\section{Statement of the main result}\label{S:main results}

In this section we state our main result, Theorem \ref{main}, and recall the main definitions which are needed. The result will be restated and proven as Theorem \ref{main thm} in Section \ref{section voisin}, once the necessary ingredients will have been developed in Sections \ref{S:relative Lef} to \ref{section voisin}.

%
%

\begin{theorem}\label{main}
Let $X$ be a hyper-K\"ahler variety admitting a Lagrangian fibration $f:X \rightarrow B$.
Assume that:
   \begin{enumerate}
\item The Picard rank of $X$ is $2$.
\item The SYZ conjecture (Conjecture \ref{conj SYZ}) holds for $X$. \item The Lagrangian fibrations on any birational model of $X$  are Ng\^o fibrations (cf. Definition \ref{def ngo}).
\item The fibers of these fibrations are irreducible.
\end{enumerate}
Then the Lefschetz standard conjecture holds for $X$ (and hence all the standard conjectures hold for $X$).
\end{theorem}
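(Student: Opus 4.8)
The plan is to pass through a relative version of the conjecture over the base $B$ and then descend to $X$. Fix an ample class $L=\eta+\beta$ on $X$ with $\eta$ relatively ample and $\beta$ the pullback of an ample class $L_B$ on $B$; proving the Lefschetz standard conjecture for $X$ amounts to producing, for every $n$, an algebraic correspondence inducing the inverse $\Lambda_n$ of hard Lefschetz $\cup L^{n}\colon H^{-n}(X)\isocan H^{n}(X)$. Working in the category $\Mot(B)$ of relative homological motives, the decomposition theorem and relative hard Lefschetz give the splitting \eqref{BBD} and the isomorphisms \eqref{BBDeta}, \eqref{BBDbeta}. So the first step is to reduce, in Section \ref{S:relative Lef}, the conjecture for $X$ to a \emph{relative} Lefschetz standard conjecture for $f$: the algebraicity of relative operators $\Lambda_{\eta}$ and $\Lambda_{\beta}$ inverting \eqref{BBDeta} and \eqref{BBDbeta} respectively. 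As stressed in the introduction this reduction is not formal, because of the three subtleties listed there; to overcome them one needs (a) the splitting \eqref{BBD} to be \emph{good} in the sense of de Cataldo \cite{DeCataldo} (Section \ref{S:good decomp}), so that $\eta$ and $\beta$ act bigradedly and an expression such as $\Lambda_n=\sum_{a+b=n}\Lambda_{\eta^{b}}\circ\Lambda_{\beta^{a}}$ makes sense and is correct, and (b) control of the supports of the perverse sheaves ${}^{p}R^{b}f_{*}\mathcal{C}$.

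Both inputs are available under our hypotheses. By the theorem of Shen--Yin \cite{SY21} (Section \ref{section SY}), for a Lagrangian fibration the splitting \eqref{BBD} is good, so $\eta$ and $\beta$ are bigraded and subtlety (3) of the introduction disappears. By assumption (3) the fibration $f$ is an Ng\^o fibration, so Ng\^o's support theorem \cite{Ngo10} (Section \ref{section waf}) applies; together with assumption (4) it forces each ${}^{p}R^{b}f_{*}\mathcal{C}$ to be the intermediate extension $j_{!*}$ of a local system on the smooth locus $j\colon U\hookrightarrow B$ of $f$, with no summand strictly supported on a proper closed subset, which removes subtlety (1). In particular the relative hard Lefschetz isomorphism \eqref{BBDeta} is the intermediate extension of its restriction over $U$, where it is fiberwise hard Lefschetz on the smooth fibers $X_u$.

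Since the smooth fibers of a Lagrangian fibration are abelian varieties (or torsors under abelian varieties), the Lefschetz standard conjecture — hence the algebraicity of the fiberwise hard-Lefschetz inverse — holds along the fibers of $f$ over $U$. Thus $\Lambda_{\eta}$ is induced over $U$ by an algebraic cycle on $X\times_U X$; because ${}^{p}R^{b}f_{*}\mathcal{C}$ is the intermediate extension of its restriction, the morphism needed to invert \eqref{BBDeta} is uniquely determined by its restriction over $U$, and a spreading-out argument (take the closure of the cycle and check, using full support, that it still induces the inverse) gives an algebraic correspondence over all of $B$. This is the relative Lefschetz standard conjecture for $f$, and it is here that assumptions (3) and (4) are used. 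For $\Lambda_{\beta}$ — the ``most tragic point'', since $f^{*}H^{*}(B)$ only meets one perverse piece — we use the argument of Voisin (Section \ref{section voisin}): as the Picard rank of $X$ is $2$ the Beauville--Bogomolov form has exactly two isotropic lines, one of them $\bQ\beta$, and, using the SYZ conjecture (assumption (2)) when the other line lies on the boundary of the birational K\"ahler cone and Voisin's automorphism-of-the-motive construction otherwise, one obtains a second Lagrangian fibration $f'\colon X'\to B'$ on a birational model of $X$ (or a virtual one on $X$ itself) in which $\beta$ plays the role of a relatively ample class. By assumptions (3) and (4), $f'$ is again an Ng\^o fibration with irreducible fibers, so the preceding paragraph applies to $f'$ and yields the algebraicity of the operator $\Lambda_{\eta'}$ inverting relative hard Lefschetz for $f'$; since birational hyper-K\"ahler varieties have isomorphic homological motives, this transports back to the algebraicity of $\Lambda_{\beta}$ for $f$.

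Finally, with $\Lambda_{\eta}$ and $\Lambda_{\beta}$ both algebraic and \eqref{BBD} good, one assembles the operators — the bigrading of $\eta$ and $\beta$ ensuring that no cancellation occurs — into an algebraic correspondence inducing $\Lambda_n$ in every degree, i.e.\ the Lefschetz standard conjecture for $X$, whence all the standard conjectures. The main obstacle, and the reason the paper builds the category $\Mot(B)$ and the relative conjecture with care, is exactly the relative-to-absolute passage: ensuring that ``good decomposition $+$ full support $+$ fiberwise algebraicity'' really glues to a genuine absolute cycle on $X\times X$, while tracking how $\eta$ and $\beta$ interact with the perverse filtration. A secondary difficulty is verifying that Voisin's second fibration interchanges the two operators at the level of motives and not merely of cohomology.
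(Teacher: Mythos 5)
Your overall architecture is the paper's: relative Lefschetz via Ng\^o's support theorem plus the Lefschetz conjecture for the abelian generic fiber (this is exactly \cref{criterion SCLT} and \cref{relative SCLT HK}), the Shen--Yin good decomposition to make $\eta$ and $\beta$ bigraded, and Voisin's trick (second fibration under SYZ, or the motivic automorphism $\gamma$ when only $\beta$ is potentially Lagrangian) to handle the $\beta$-direction. The genuine gap is in your final assembly step. You conclude by forming $\Lambda_n=\sum_{a+b=n}\Lambda_{\eta^{b}}\circ\Lambda_{\beta^{a}}$ and assert that ``the bigrading of $\eta$ and $\beta$ ensures that no cancellation occurs.'' But the bigradedness given by Shen--Yin concerns the operators $\cup\eta$ and $\cup\beta$, not their algebraic inverses: the correspondence $\gamma_n(f)$ of \cref{def SCLT} is only required to invert relative hard Lefschetz in the single degree $-n$, and its action on the other perverse (and $\beta$-) degrees is completely uncontrolled. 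This is precisely subtlety (3) of the introduction, and it is not removed by goodness of the decomposition alone; the paper never writes $\Lambda_n$ as such a sum. What the paper does instead is to turn the partial inverses into \emph{motivic projectors}: in \cref{equiv SCLT} one composes $\gamma_N(f)$ and $\cup\eta^N$ with previously constructed projectors, observes that the realization is upper triangular with prescribed diagonal, and takes a high power to obtain an idempotent cutting out $\mathfrak h^{\pm N}(X/B)$; in \cref{motivic good decomposition} the same mechanism, run inductively and combined with autoduality of the good decomposition, makes the good (primitive) decomposition motivic with selfdual factors. The conclusion is then reached not by exhibiting $\Lambda_n$ for an ample $L=\eta+\beta$ directly, but by showing (\cref{decomp two fibrations} and \cref{main thm}) that the bigraded Shen--Yin decomposition is motivic with each factor isomorphic to its dual; since it refines the K\"unneth decomposition, this is one of the standard equivalent formulations of the Lefschetz standard conjecture. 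Without the projector construction, your sum formula may genuinely fail because of the off-degree components of the $\Lambda$'s.

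A second, smaller soft spot, which you flag yourself but do not resolve, is the case where only $\beta$ is potentially Lagrangian: there you need the Huybrechts--Riess automorphism $\gamma$ of $\mathfrak h(X)$ to interchange the two induced \emph{motivic} decompositions, not merely the cohomological operators. The paper closes this by conjugating the motivic projectors by $\gamma$ and invoking the uniqueness of the spaces $V^{i,j}$ (\cref{cor unicity}) to get $\gamma^{-1}V^{i,j}=V^{j,i}$ in the proof of \cref{main thm}; some such uniqueness statement is indispensable, since the decomposition of \cref{thm SY} is characterized by kernels of powers of $\cup\eta$ and $\cup\beta$ and is not given by any canonical splitting of the derived pushforward.
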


This is Theorem \ref{thm intro2} in the introduction; Theorem \ref{thm intro} then follows as a special case. Let us recall some of the concepts appearing in Theorem \ref{main}.

\begin{definition}\label{definition HK} A smooth projective variety $Y$ is called hyper-K\"ahler (HK) if it is simply-connected and 
  \[ H^0(Y,\Omega^2_Y)=\bC[\sigma_Y]\ ,\] 
  where $\sigma_Y$ is a nowhere-degenerate two-form.
  
\end{definition}

\begin{remark}  In every even complex dimension greater or equal to $ 4$, there are two series of known examples: those of K3$^{[n]}$-type, which are deformation equivalent to the Hilbert schemes of $n$ points on a K3 surface, and those of generalized Kummer type, which are deformation equivalent to generalized Kummer varieties. In addition to these, there are two other deformation classes in dimensions $6$ and $10$. These deformation classes are referred to as OG$6$, respectively OG$10$, and hyper-K\"ahler manifolds in these deformation classes are called of {\em OG6-type}, respectively  of  {\em OG10-type}.
%
\end{remark}

\begin{definition}\label{definition lagrangian} Let $Y$ be a HK variety. A {\em Lagrangian fibration\/} is a surjective morphism with connected fibers $f\colon Y\to B$ to a normal variety such that $0<\dim B < \dim X$.
\end{definition}

\begin{remark} 
By \cite{Matsushita99}, the smooth fibers of the fibration are Lagrangian (i.e. they are maximal isotropic submanifolds of $Y$), in particular,  $\dim B={1\over 2}\dim X$. In fact, more is true, namely that every fiber is Lagrangian in the sense that the smooth locus of the reduced underlying variety is a Lagrangian submanifold \cite{Matsushita2000}. In particular, $f$ is equidimensional. A long-standing expectation is that the base of a Lagrangian fibration is a projective space; this is known to be true when $\dim Y=4$ \cite{HX} and also known to hold under the condition that $B$ is smooth \cite{Hwang}.
\end{remark}

\begin{remark}\label{rem huy}Recall that birational HK varieties are non separated points of the moduli spaces, in the sense that they appear as central fibers of one parameter deformations that are isomorphic away from the central fiber; the choice of such families induces an isomorphism of their cohomology rings and motives \cite{Huy,Riess}. As a consequence, the Lefschetz standard conjecture for one HK manifold implies the Lefschetz standard conjecture for all of its birational HK models. Recall also that birational HK manifolds are isomorphic in codimension $2$ and thus have canonically isomorphic second cohomology groups.
\end{remark}

The previous remark is implicitly used in the following definition. 
\begin{definition}\label{def lagrangian}
Let $Y$ be a HK and $q$ be the Beauville--Bogomolov quadratic form on $H^2(Y)$. 
A divisor class $l$ on $Y$ is said to be Lagrangian if there exists a birational HK $Y'$ on which $l$ is semi-ample. This means that $Y'$ admits a Lagrangian fibration $Y' \to B'$ and that, up to the identification $H^2(Y) \cong H^2(Y')$,  $l$ is the pullback of an ample class on $B'$.

The class $l$ is said to be potentially Lagrangian if $q(l)=0$ and $l$ is in the boundary of the birational K\"ahler cone.
\end{definition}
\begin{conjecture}\label{conj SYZ} (Conjecture SYZ)
All potentially Lagrangian classes are actually Lagrangian.
\end{conjecture}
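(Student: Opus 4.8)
This is the hyper-K\"ahler SYZ conjecture, written here in terms of the birational K\"ahler cone; it is open in general and enters the paper only as a hypothesis (assumption (2) of Theorem \ref{main}), having been verified for every currently known deformation type. The plan is to split it into a cone-theoretic reduction and a semi-ampleness input, the latter being the real content. For the reduction: starting from a potentially Lagrangian class $l$ on $X$, so $q(l)=0$ and $l$ lies on the boundary of the birational K\"ahler cone $\mathcal{BK}_X$, I would first pass to a smooth birational model $X'$ of $X$ on which the transform $l'$ of $l$ is nef. Here one uses the structure of the movable cone of a HK variety, whose closure contains $\overline{\mathcal{BK}_X}$ and is tiled by the (pullbacks along birational maps of the) nef cones of the various smooth birational models; the local finiteness of this tiling is the Morrison--Kawamata cone conjecture, known for HK varieties by work of Markman, Amerik--Verbitsky and Markman--Yoshioka. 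After applying a birational automorphism one may therefore assume $l$ lies in $\overline{g^{*}\mathcal{K}_{X'}}$ for a single birational map $g\colon X\dashrightarrow X'$, i.e.\ that $l'$ is nef on $X'$.

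The second and essential step would be to prove that a nef class $l'$ on a HK variety $X'$ with $q(l')=0$ (and $l'\neq 0$) is semi-ample. Granting this, the associated morphism $X'\to B'$ necessarily has base of dimension $\tfrac12\dim X'$, hence is a Lagrangian fibration by Matsushita's theorems recalled after Definition \ref{definition lagrangian}, and it exhibits $l'$, and therefore $l$, as the pullback of an ample class; this is precisely the statement that $l$ is Lagrangian. This semi-ampleness assertion is the hyper-K\"ahler analogue of abundance: it is classical for K3 surfaces, and for the known higher-dimensional deformation types it can be obtained from explicit descriptions --- Bayer--Macr\`{\i}'s MMP for moduli of sheaves on K3 surfaces (together with Matsushita's results) in K3$^{[n]}$-type, the parallel analysis on abelian surfaces in generalized Kummer type, and the explicit birational models in OG6- and OG10-type. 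For the purposes of this paper it is only these established cases that need to be invoked, and the LSV tenfolds fall under the OG10 case.

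The main obstacle is this very semi-ampleness statement in full generality: there is no known deformation-free argument that a nef line bundle with vanishing Beauville--Bogomolov square on an arbitrary HK variety is semi-ample, which is exactly why the conjecture is established only deformation type by deformation type. A more technical difficulty, present even in the known cases, is that $l$ must be shown to lie on a boundary face of $\mathcal{BK}_X$ of \emph{fibration} type rather than one corresponding to a divisorial or flopping contraction --- see the discussion following Theorem \ref{thm intro2} --- and excluding the degenerate faces again requires the cone conjecture together with the results of Matsushita and Hwang on the base of a Lagrangian fibration.
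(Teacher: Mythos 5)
This statement is a conjecture in the paper and is not proved there: the paper only records, in Proposition \ref{SYZ holds}, that it holds for all known deformation types by citing the literature (Matsushita for K3$^{[n]}$ and generalized Kummer type, Mongardi--Rapagnetta and Mongardi--Onorati for OG6 and OG10). Your proposal correctly identifies this status (open in general, used only as hypothesis (2) of Theorem \ref{main}, verified type by type) and sketches essentially the same route as the cited references --- reduction via the birational K\"ahler/movable cone to a nef isotropic class on a birational model, then type-specific semi-ampleness --- so it matches the paper's treatment.
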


This conjecture holds true for all known examples of HK varieties:

\begin{proposition}\label{SYZ holds} Let $Y$ be a HK variety which is either of K3$^{[n]}$-type, or of generalized Kummer type, or of OG6-type, or of OG10-type. Then Conjecture \ref{conj SYZ} is true for $Y$.
\end{proposition}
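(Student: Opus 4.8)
The plan is to derive Conjecture \ref{conj SYZ} for these four deformation types from the \emph{hyper-K\"ahler SYZ theorem} -- the statement that on a projective HK of the given type, a nonzero nef divisor class $l$ with $q(l)=0$ is semi-ample -- which is known precisely in the K3$^{[n]}$, generalized Kummer, OG6 and OG10 cases. The reduction proceeds in two steps: first move $l$ to a nef class on a birational model using the structure of the movable cone, then apply the known semi-ampleness result together with Matsushita's theorems.

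First I would recall the structure of the movable cone. Let $l$ be a potentially Lagrangian class on $Y$, i.e.\ a divisor class with $q(l)=0$ lying in the boundary of the birational K\"ahler cone $\mathcal{BK}_Y$. By Huybrechts' work \cite{Huy} (together with Boucksom's characterization of the movable cone and Markman's refinements), $\mathcal{BK}_Y$ is the union of the pullbacks $\psi^*\mathcal{K}_{Y'}$ of the K\"ahler cones of all HK birational models $\psi\colon Y\dashrightarrow Y'$; equivalently, the closed movable cone $\overline{\mathcal{M}ov}(Y)$ is tiled by the ample cones of birational models, with walls governed by classes of extremal (flopping or contracted) rational curves. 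Since $q(l)=0$, the class $l$ lies on the boundary of the positive cone, hence on the boundary of $\overline{\mathcal{M}ov}(Y)$. For $Y$ of one of our four deformation types, this wall-and-chamber decomposition is known explicitly in lattice-theoretic terms -- by Bayer--Macr\`{\i} for K3$^{[n]}$-type, by Yoshioka and the Bayer--Macr\`{\i} wall-crossing machinery for generalized Kummer type, by Mongardi--Rapagnetta for OG6, and by Mongardi--Onorati for OG10 -- and in particular a primitive isotropic class on the boundary of the movable cone is nef on a suitable HK birational model $Y'$. Transporting $l$ along the canonical isomorphism $H^2(Y)\cong H^2(Y')$ (Remark \ref{rem huy}), I may thus assume $l$ is a nonzero nef class on $Y'$ with $q(l)=0$.

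Next I would invoke the hyper-K\"ahler SYZ theorem for the relevant deformation class (Bayer--Macr\`{\i} for K3$^{[n]}$-type; Yoshioka and related work for generalized Kummer type; Mongardi--Rapagnetta for OG6; Mongardi--Onorati for OG10): some positive multiple of $l$ is base-point free, so it defines a morphism $g\colon Y'\to B'$. As $l\neq 0$ and $q(l)=0$, Matsushita's theorems \cite{Matsushita99,Matsushita2000} force $\dim B'=\tfrac12\dim Y'$ and $g$ to be a Lagrangian fibration with $l$ equal to the pullback of an ample class on $B'$. This is exactly the assertion that $l$ is Lagrangian in the sense of Definition \ref{def lagrangian}; hence Conjecture \ref{conj SYZ} holds for $Y$.

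The main obstacle is the reduction in the second paragraph, i.e.\ passing from ``$l$ lies on the boundary of the birational K\"ahler cone'' to ``$l$ is nef on some birational model''. This requires the wall-and-chamber decomposition of the movable cone to be sufficiently well-behaved near the isotropic boundary ray through $l$: one must know that this ray is either not an accumulation point of walls interior to $\overline{\mathcal{M}ov}(Y)$, or, if it is, that it nevertheless lies in the closed ample cone of one of the models. This is precisely the input supplied by the monodromy and wall-crossing computations available only for these four deformation types, and it is the reason Conjecture \ref{conj SYZ} remains open in general; for the purposes of this paper only these cases are needed, and there the cited results apply directly.
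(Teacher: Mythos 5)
Your proposal is correct and is essentially the paper's own approach: the paper proves the proposition simply by citing the known SYZ results for these deformation types (\cite[Corollary 1.1]{Matsushita} for K3$^{[n]}$ and generalized Kummer type, \cite[Theorem 7.2]{MongardiRap} for OG6, \cite[Theorem 2.2]{MongardiOno} for OG10), which are exactly the results whose internal mechanism (movable-cone wall structure, semi-ampleness of isotropic nef classes on a birational model, and Matsushita's theorems) you have unpacked. So your write-up is a correct, more detailed rendition of the same citation-based argument rather than a different route.
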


\begin{proof} For HK varieties of K3$^{[n]}$-type or of generalized Kummer type, the statement is proven in \cite[Corollary 1.1]{Matsushita}.
For HK varieties of OG6 resp. OG10-type, the statement is proven in \cite[Theorem 7.2]{MongardiRap} resp. \cite[Theorem 2.2]{MongardiOno}.
\end{proof}

\section{The relative Lefschetz standard conjecture}\label{S:relative Lef}
 
Throughout this section we fix a  surjective projective morphism $f:X \longrightarrow B$ of algebraic varieties with $X$ smooth and connected. 
We will denote by $ C_X$ the constant perverse sheaf on $X$.
We also fix a relatively ample divisor $\eta$.

The goal of the section is to formulate an analogue of the Lefschetz standard conjecture in this relative setting, and to give some basic properties.

Recall that in this geometric context an algebraic cycle in $X\times_BX$ acts on the relative cohomology \cite{CoHa}, \cite[Chapter 8]{MNP}. Seen in this way it is called a relative correspondence. 

\begin{definition}\label{def SCLT}
We say that the pair $(f,\eta)$ verifies the relative Lefschetz standard conjecture (or simply that $X$ does) if for all cohomological degrees $n\in \bN$ there exists a (non-unique) relative correspondence $\gamma_n(f)$ inducing the inverse of the relative hard Lefschetz isomorphism
\[\cup \eta^n \colon\ \  ^p\!R^{-n}f_*C_X \ \cong\  ^p\!R^{+n}f_\ast C_X [2n] \]
in degree $-n$ (without any prescribed property in the other degrees).
\end{definition}

\begin{proposition}\label{criterion SCLT}
Assume that:
\begin{enumerate}
\item The fiber over the generic point of $B$ verifies the Lefschetz standard conjecture,
\item For all $n$ the perverse sheaf $ ^p\! R^{n}f_\ast C_X$ is the intersection complex of a local system. 
\end{enumerate}
Then the pair $(f,\eta)$ verifies the relative Lefschetz standard conjecture (\cref{def SCLT}).
\end{proposition}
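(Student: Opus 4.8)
The plan is to construct the required correspondence $\gamma_n(f)$ over the dense open locus where $f$ is smooth, using the Lefschetz standard conjecture for the generic fibre, and then to extend it to all of $B$; hypothesis (2) is exactly what makes this extension harmless.

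Fix $n$ (the case $n=0$ being trivial, as the relative diagonal is its own inverse), and choose a dense open $j \colon U \hookrightarrow B$ over which $f$ is smooth. The generic fibre $X_{k(B)}$ is smooth projective over $k(B)$ and $\eta$ restricts to an ample divisor on it, so by the hard Lefschetz theorem and hypothesis (1) there is an algebraic correspondence on $X_{k(B)} \times_{k(B)} X_{k(B)}$ whose class inverts $\cup\eta^n$ on the cohomology of the geometric generic fibre (the Lefschetz standard conjecture for a variety is insensitive to the polarization, and the correspondence may be taken over $k(B)$ itself, cf. the conventions of Section \ref{S:conventions}). Taking Zariski closures of its components inside $X \times_B X$ and shrinking $U$ if necessary, we obtain a relative correspondence $\Gamma_n$ on $X_U \times_U X_U$. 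Over $U$ everything is transparent: by Deligne's decomposition theorem for the smooth projective morphism $f_U$, the restriction $\,{}^p\!R^{b}f_\ast C_X|_U$ is the local system of $b$-th fibre cohomology (suitably placed), $\cup\eta^n$ restricts to the fibrewise classical hard Lefschetz isomorphism, and, by proper base change for relative correspondences, the action of $\Gamma_n$ on relative cohomology has stalk at each point of $U$ equal to the action of the corresponding fibre cycle. Since the fibre cycle at the generic point of $U$ inverts $\cup\eta^n$ by construction, and a morphism of local systems on the irreducible variety $U$ is determined by its germ at the generic point, $\Gamma_n$ induces over $U$ exactly the inverse of relative hard Lefschetz in degree $-n$.

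Now let $\gamma_n(f)$ be any relative correspondence on $X \times_B X$ restricting to $\Gamma_n$ over $U$, say its cycle-theoretic closure. Acting on $Rf_\ast C_X$ it induces, in degree $-n$, a morphism $\psi$ between the perverse cohomology sheaves $\,{}^p\!R^{-n}f_\ast C_X\,$ and $\,{}^p\!R^{+n}f_\ast C_X\,$ paired by $\cup\eta^n$ (cf. \cref{def SCLT}), and $\psi|_U$ is the inverse of relative hard Lefschetz over $U$ by the previous paragraph. By hypothesis (2) these two perverse sheaves are intersection complexes of local systems on dense opens of $B$, so neither admits a nonzero subobject or quotient supported on $B \setminus U$; consequently any morphism between them --- in particular $\psi$ as well as the genuine inverse of $\cup\eta^n$ over $B$, which exists by the relative hard Lefschetz theorem --- is determined by its restriction to $U$. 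Since both restrict to the same morphism over $U$, they coincide on all of $B$. Thus $\gamma_n(f)$ satisfies the requirement of \cref{def SCLT}; letting $n$ vary completes the proof.

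The crux is this last step, and it is the unique place where hypothesis (2) is genuinely used: if the $\,{}^p\!R^{b}f_\ast C_X$ carried summands strictly supported on the discriminant of $f$, the generic fibre would control a correspondence only over $U$ and would pin down nothing over the bad locus --- precisely ``problem (1)'' of the Introduction. The remaining ingredients are routine: the compatibility of the action of relative correspondences with restriction to opens and with the fibrewise description over the smooth locus belongs to the Corti--Hanamura formalism recalled in Section \ref{S:conventions}; cycles always extend by closure; and the propagation of the identity ``$\Gamma_n$ inverts hard Lefschetz'' from the generic point to all of $U$ only uses that a morphism of local systems over an irreducible base is determined by its generic germ.
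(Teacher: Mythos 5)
Your proposal is correct and follows essentially the same route as the paper: take the inverse correspondences on the generic fibre supplied by hypothesis (1), extend by closure in $X\times_B X$, and use hypothesis (2) to reduce the verification of the required identity from $B$ to a dense open $U$ and then to the generic fibre. The only cosmetic difference is how the reduction to $U$ is justified — you invoke the characterization of intermediate extensions as having no nonzero sub- or quotient objects supported on $B\setminus U$, whereas the paper deduces the injectivity of $\End(IC(\mathcal{L}))\hookrightarrow\End(\mathcal{L})$ from simplicity of $IC$ of simple local systems and Schur's lemma; both arguments are standard consequences of \cite{BBD} and are interchangeable here.
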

\begin{proof}
By the first hypothesis, there exist correspondences $\gamma_n$  on the square of the generic fiber acting as the inverse of the Lefschetz operators. 
For each $n$ we define $\gamma_n(f)$ to be any relative correspondence extending   $\gamma_n$ (for example the closure of $\gamma_n$ in $X \times_B X$). We claim that such a $\gamma_n(f)$ satisfies the properties of \cref{def SCLT}.

We need to check the equalities 
  \[  \begin{split} \bigl(\gamma_n(f) \circ \cup \eta^n\bigr)\vert_{ ^p\! R^{-n}f_*C_X } &= \id\vert_{  ^p\! R^{-n}f_*C_X }\ ,\\
       \bigl( \cup \eta^n \circ \gamma_n(f)  \bigr)\vert_{  ^p\! R^{n}f_*C_X } &= \id\vert_{  ^p\! R^{n}f_*C_X } \ .\\
       \end{split}\]
Because of the second hypothesis, both equalities live in an algebra of the form $\End(IC(\mathcal{L}))$, with $\mathcal{L}$ a semisimple local system on an open $U$ of the base $B$.
Since intersection complexes of a simple local system are simple \cite[Théorème 4.3.1]{BBD}, it follows from Schur's lemma that restriction from $B$ to $U$ induces an injective map 
\begin{equation}\label{junlian shen}
\End(IC(\mathcal{L}))\ \hookrightarrow\ \End(\mathcal{L})\ ,\end{equation}
and so the equality can be checked after restriction to $U$.
Finally, since $\mathcal{L}$ is a local system, the equalities can be checked after restriction to any fiber and in particular to the generic fiber.
\end{proof}

\begin{proposition}\label{equiv SCLT}
The  pair $(f,\eta)$ verifies the relative Lefschetz standard conjecture if and only if the motive
$\mathfrak{h}(X/B)\in \Mot(B)$ admits a decomposition 
\begin{align}\label{relative kunneth}
\mathfrak{h}(X/B) = \bigoplus_n \mathfrak{h}^n(X/B)\ ,
\end{align}
satisfying
\begin{enumerate}
\item The realization of $\mathfrak{h}^n(X/B)$ is   concentrated in perverse  degree $n$,
\item There is an isomorphism $\mathfrak{h}^n(X/B)\cong \mathfrak{h}^{-n}(X/B)(-n)$ for all $n$.
\end{enumerate}  
In particular, the fact that the pair $(f,\eta)$ verifies the relative Lefschetz standard conjecture does not depend on the fixed relatively ample divisor $\eta$.
\end{proposition}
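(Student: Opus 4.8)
The plan is to prove the two directions separately, using the faithful realization functor $R_B\colon \Mot(B)\to D^b(B)$ and the relation between endomorphisms of $\mathfrak h(X/B)$ and relative correspondences. Recall that by construction of $\Mot(B)$ the object $\mathfrak h(X/B)$ realizes to $Rf_*C_X$, and $\End_{\Mot(B)}(\mathfrak h(X/B))$ is the image in $\End_{D^b(B)}(Rf_*C_X)$ of the algebra of relative correspondences in $X\times_B X$ modulo homological equivalence; the isomorphism $\mathfrak h^n(X/B)\cong \mathfrak h^{-n}(X/B)(-n)$ encodes, after realization, a morphism $^p\!R^{-n}f_*C_X\to {}^p\!R^{+n}f_*C_X[2n]$ induced by an algebraic relative correspondence.

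First I would show that the decomposition \eqref{relative kunneth} with properties (1) and (2) implies the relative Lefschetz standard conjecture. Given such a decomposition, the projectors $\pi_n\colon \mathfrak h(X/B)\to \mathfrak h^n(X/B)\hookrightarrow \mathfrak h(X/B)$ are algebraic relative correspondences whose realizations are the (perverse) Künneth projectors $R^nf_*$-projectors onto $^p\!R^nf_*C_X$. Composing $\cup\eta^n$ (itself an algebraic correspondence, being cup-product with a divisor class pulled back appropriately) with the given isomorphism $\mathfrak h^n(X/B)\cong\mathfrak h^{-n}(X/B)(-n)$ and the projectors $\pi_{\pm n}$ produces, after realization, an endomorphism of $^p\!R^{-n}f_*C_X$; by the hard Lefschetz isomorphism \eqref{BBDeta} applied fiberwise and the fact that $R_B$ is faithful, one checks this composite realizes to the inverse of $\cup\eta^n$ on the piece $^p\!R^{-n}f_*C_X$. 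Taking $\gamma_n(f)$ to be the resulting relative correspondence (or rather, $\pi_{-n}$ composed with the inverse-isomorphism composed with $\pi_{+n}$) gives exactly what Definition \ref{def SCLT} requires, with no constraint imposed in the other perverse degrees. The subtlety to be careful about is that the correspondence built this way need only invert $\cup\eta^n$ on the $n$-th graded piece, and the definition explicitly allows arbitrary behavior elsewhere, so no extra bookkeeping across degrees is needed.

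Conversely, assume $(f,\eta)$ verifies the relative Lefschetz standard conjecture, so for each $n$ we have a relative correspondence $\gamma_n(f)$ inverting $\cup\eta^n$ in perverse degree $-n$. The idea is the classical Lefschetz-type argument (as in the construction of the Künneth/Lefschetz projectors from the hard Lefschetz operators, cf. Kleiman and the motivic decomposition in \cite[Chapter 8]{MNP}): from the operators $\cup\eta$ and the $\gamma_n(f)$ one builds, by universal polynomial formulas in these operators applied to $\mathfrak h(X/B)$, a complete orthogonal system of idempotents $\pi_n\in\End_{\Mot(B)}(\mathfrak h(X/B))$ projecting onto the perverse pieces; this uses that all the needed relations can be checked after the faithful realization $R_B$, where they hold by the (relative) hard Lefschetz theorem and the semisimplicity package for perverse sheaves of \cite{BBD}. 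One then sets $\mathfrak h^n(X/B):=(\mathfrak h(X/B),\pi_n)$ in the pseudo-abelian category $\Mot(B)$; property (1) is immediate from the construction, and property (2), the isomorphism $\mathfrak h^n(X/B)\cong\mathfrak h^{-n}(X/B)(-n)$, is realized by $\cup\eta^n$ in one direction and $\gamma_n(f)$ in the other, these being mutually inverse on the relevant pieces by hypothesis — again checked via faithfulness of $R_B$.

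The main obstacle is the converse direction, specifically producing genuine \emph{algebraic} (i.e. correspondence-induced) idempotents $\pi_n$ rather than merely idempotents in $D^b(B)$. The point is that $\gamma_n(f)$ only controls $\cup\eta^n$ in a single degree, whereas the standard recursive construction of Lefschetz projectors mixes all degrees; one must check that the universal polynomial expressions in $\cup\eta$ and the $\gamma_n(f)$ that yield the projectors can be written using only algebraic operations available in $\Mot(B)$, and that the orthogonality and completeness relations — which a priori involve how $\gamma_n(f)$ acts in degrees $\ne -n$ — nonetheless hold after realization. The resolution is that these relations live in $\End_{D^b(B)}(Rf_*C_X)$, which by Proposition \ref{criterion SCLT}-type reasoning (reducing to $\End$ of semisimple perverse sheaves, hence to a semisimple algebra where idempotent lifting and orthogonalization are formal) can be handled purely on the realization side; faithfulness of $R_B$ then transports the resulting idempotents back to $\Mot(B)$. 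I would also remark that this is the relative analogue of the well-known equivalence, in the absolute setting, between the Lefschetz standard conjecture and the existence of the Chow–Künneth-type decomposition into pieces $\mathfrak h^n(X)\cong\mathfrak h^{-n}(X)(-n)$, and the proof follows the same template, the only new input being the faithfulness of $R_B$ which makes "checking after realization" legitimate.
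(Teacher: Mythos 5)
Both directions of your proposal have a genuine gap at exactly the two points where the paper has to work. For the direction ``decomposition $\Rightarrow$ relative Lefschetz'', you take $\gamma_n(f)$ to be (the correspondence underlying) the given isomorphism $\phi_n\colon \mathfrak{h}^n(X/B)\cong\mathfrak{h}^{-n}(X/B)(-n)$ cut by the projectors, and assert that ``one checks this composite realizes to the inverse of $\cup\eta^n$''. There is no reason for that: $\phi_n$ is an \emph{arbitrary} isomorphism, so $R_B(\phi_n)\circ(\cup\eta^n)$ is an arbitrary automorphism of ${}^p\!R^{-n}f_*C_X$, not the identity, and neither relative hard Lefschetz nor faithfulness of $R_B$ changes this (faithfulness lets you verify identities between algebraic maps, it does not make an identity true). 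What is actually needed is the paper's Cayley--Hamilton step: view $(\cup\eta^n)$ composed with $\phi_n$ as an endomorphism $F$ of a single motive, note that $R_B(F)$ is invertible so its inverse is a polynomial $P(R_B(F))$, and conclude by faithfulness that $P(F)$ is an algebraic inverse of $F$ inside $\bQ[F]$; only after this correction does one obtain a correspondence inverting $\cup\eta^n$ in degree $-n$.

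For the converse direction, you correctly identify the difficulty (each $\gamma_n(f)$ is uncontrolled outside degree $-n$, while the projector construction mixes degrees), but your proposed resolution is invalid: you say the orthogonality/completeness relations can be arranged ``purely on the realization side'' in a semisimple endomorphism algebra and that ``faithfulness of $R_B$ then transports the resulting idempotents back to $\Mot(B)$''. Faithfulness is not fullness --- idempotents produced in $D^b(B)$ cannot be transported back; only elements already constructed as algebraic expressions can be checked there. Moreover $\End_{D^b(B)}(Rf_*C_X)$ is not semisimple (there are nonzero Ext's from higher to lower perverse degrees), and the semisimplicity argument of \cref{criterion SCLT} relies on a full-support hypothesis not available in \cref{equiv SCLT}. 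The missing content is precisely the explicit algebraic formula: working by descending induction on $N$ with $p$ the projector onto the middle range, the paper sets $q=p\circ\gamma_N(f)\circ p\circ(\cup\eta^N)\circ p$, observes that $R_B(q)$ is upper triangular for the perverse filtration with diagonal equal to the identity in degree $-N$ and zero elsewhere, so that the power $\pi_{-N}=q^{2N}$ is an algebraic idempotent realizing ${}^p\!R^{-N}f_*C_X$; similarly for $\pi_{+N}$, and since these need not be orthogonal one replaces $\pi_{+N}$ by $\pi_{+N}-\pi_{-N}\circ\pi_{+N}$. Your appeal to ``universal polynomial formulas'' in the spirit of Kleiman does not supply such a formula (the classical construction presupposes inverses acting compatibly in all degrees, which is exactly what \cref{def SCLT} does not give), so as written the construction of the motivic projectors is not justified.
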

\begin{proof}
Let us suppose that $(f,\eta)$ verifies the relative Lefschetz standard conjecture. We show by descending induction on $N\geq 0$ that there is a motivic decomposition
\begin{align}\label{decomp induction}
\mathfrak{h}(X/B) = \mathfrak{h}^{-N\geq n\geq N} (X/B) \oplus \bigoplus_{n<-N \,\textrm{or}\, n>N} \mathfrak{h}^n(X/B)\ ,
\end{align}
such that properties (1) and (2) of the statement are satisfied and $R_B(\mathfrak{h}^{-N\geq n\geq N} (X/B))$ is concentrated in degrees between $-N$ and $N$. To show the inductive step, let us decompose further in degree $-N$ and $+N$. Consider the algebraic correspondences $\gamma_N(f)$ and $\cup \eta^N $ from Definition \ref{def SCLT} and consider the projector $p$ defining $ \mathfrak{h}^{-N\geq n\geq N} (X/B) $ inside $\mathfrak{h}(X/B)$ through the decomposition \eqref{decomp induction}. Consider the endomorphism $q=p \circ \gamma_N(f) \circ  p  \circ  (\cup \eta^N)  \circ  p$ of $ \mathfrak{h}^{-N\geq n\geq N} (X/B) $.   Let us look at its realization $R_B(q)$, which  we write as a matrix with respect to the perverse degrees (for a fixed non canonical decomposition in the derived category). By construction this is an endomorphism of a complex concentrated in degrees between $-N$ and $+N$.  The axioms of a t-structure imply that $R_B(q)$ cannot send a given degree to a higher one so the matrix associated with $R_B(q)$ is upper triangular. Let us look at the diagonal. In degree $-N$  it is  the identity by Definition \ref{def SCLT} and in higher degrees the diagonal is zero  as  already  $p  \circ  (\cup \eta^N)  \circ  p$  is so. In conclusion $R_B(q)$ is an upper triangular matrix whose diagonal is the identity in degree $-N$ and zero otherwise. Hence the powers $R_B(q)^r$ stabilize for $r$ big enough ($r\geq 2N$), becoming projectors. We deduce that $\pi_{-N}=q^{2N}$ is a projector whose realization is concentrated in degree $-N$ and coincides with $ ^p\! R^{-N}f_\ast C_X$.

Arguing similarly we deduce that $\pi_{+N }=(p \circ (\cup \eta^N) \circ  p  \circ      \gamma_N(f)   \circ  p)^{2N}$ is a projector whose realization is concentrated in degree $-N$ and coincides with $ ^p\! R^{+N}f_\ast C_X$. Now a subtlety is that the two projectors $\pi_{-N}$ and $\pi_{+N}$ are not orthogonal a priori. The composition  $\pi_{+N} \circ \pi_{-N}$ is a morphism from lower degrees to upper degrees hence vanishes. However, the other composition can be nonzero. By direct computation one shows that $\pi_{-N} $ and $  \pi_{+N} - \pi_{-N}\circ \pi_{+N} $ are orthogonal projectors and that $  \pi_{+N} - \pi_{-N}\circ \pi_{+N} $  has again $ ^p\! R^{+N}f_\ast C_X$ as realization. Let $\mathfrak{h}^{+N}(X/B)$ and $\mathfrak{h}^{-N}(X/B)$ be the motives defined by these orthogonal projectors. The isomorphism $\mathfrak{h}^N(X/B)\cong \mathfrak{h}^{-N}(X/B)(-N)$  is induced by  $\cup \eta^N$ and $\gamma_N(f)$. This concludes one implication.

The other implication is easier. The map  $\cup \eta^N$ induces through the decomposition  $ \mathfrak{h}(X/B) = \bigoplus_n \mathfrak{h}^n(X/B)$ a map from 
 $\mathfrak{h}^{-N}(X/B)(-N)$ to
  $\mathfrak{h}^N(X/B)$. The goal is to invert such a map. As these two motives are isomorphic we can see this map as an endomorphism 
 $F$ of $\mathfrak{h}^N(X/B)$. The realization  $R_B(F)$ is an isomorphism. We claim that   the inverse of  $R_B(F)$  can be written as a polynomial in $R_B(F)$, hence there is an algebraic inverse to $F$   in the algebra $\bQ[F]$.
 
 To prove the claim we use a Cayley--Hamilton argument. More precisely, as any endomorphism of perverse sheaves will respect the decomposition by supports of intersection complexes, and on each stratum the generic point determines the endomorphism (see \eqref{junlian shen}), we are reduced to consider endomorphisms of vector spaces, to which classical Cayley--Hamilton applies. (Alternatively, the recourse to Cayley--Hamilton can be justified in a more highbrow way by observing that perverse sheaves form a tannakian category, hence a fiber functor reduces the problem to vector spaces.)
\end{proof}
\begin{corollary}
Let $X$ be a smooth projective variety endowed with a morphism $f:X\to B$ verifying the relative Lefschetz standard conjecture. Then its absolute motive $\mathfrak{h}(X)\in \Mot$ admits a decomposition 
\begin{align}\label{first kunneth}
\mathfrak{h}(X) = \bigoplus_n \mathfrak{h}(B,\mathfrak{h}^n(X/B))\ ,
\end{align}
such that the realization of $\mathfrak{h}(B,\mathfrak{h}^n(X/B))$ is  $H(B,^p\!R^{n}f_*C_X )$ and the motives verify $\mathfrak{h}(B,\mathfrak{h}^n(X/B))\cong \mathfrak{h}(B,\mathfrak{h}^{-n}(X/B))(-n).$

\end{corollary}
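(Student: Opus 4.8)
The plan is to deduce this corollary from the previous proposition by pushing the relative Künneth decomposition forward to the base. Concretely, given the relative motive $\mathfrak{h}(X/B)\in\Mot(B)$ together with the decomposition $\mathfrak{h}(X/B)=\bigoplus_n\mathfrak{h}^n(X/B)$ provided by \cref{equiv SCLT}, I would apply the (absolute) motivic cohomology functor $M\mapsto\mathfrak{h}(B,M)$ that realizes as $K\mapsto H(B,R_B(M))$. Since this functor is additive, it turns the finite direct sum decomposition of $\mathfrak{h}(X/B)$ into the decomposition \eqref{first kunneth} of $\mathfrak{h}(X)=\mathfrak{h}(B,\mathfrak{h}(X/B))$. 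The key input identifying $\mathfrak{h}(B,\mathfrak{h}(X/B))$ with $\mathfrak{h}(X)$ is the Corti--Hanamura formalism recalled in the Conventions section (item on relative motives): the commutative square relating $\CHM(B)\to D^b(B)$ and $\CHM(\bC)\to\Vect$, which after passing to homological equivalence gives the faithful realization on $\Mot(B)$ and the compatibility $R(\mathfrak{h}(B,M))=H^*(B,R_B(M))$. Applying this to $M=\mathfrak{h}^n(X/B)$, whose realization is concentrated in perverse degree $n$, i.e.\ equals $^p\!R^{n}f_*C_X[-n]$ (or the appropriate shift), yields that the realization of $\mathfrak{h}(B,\mathfrak{h}^n(X/B))$ is $H(B,^p\!R^{n}f_*C_X)$, exactly as claimed.

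Next I would transport the symmetry. Proposition \ref{equiv SCLT}(2) gives an isomorphism $\mathfrak{h}^n(X/B)\cong\mathfrak{h}^{-n}(X/B)(-n)$ in $\Mot(B)$. Applying the functor $\mathfrak{h}(B,-)$ and using that it commutes with Tate twists (the twist on $\Mot(B)$ being compatible with the one on $\Mot$ via $R_B$ and $R$, as fixed in the Tate twists convention) produces the desired $\mathfrak{h}(B,\mathfrak{h}^n(X/B))\cong\mathfrak{h}(B,\mathfrak{h}^{-n}(X/B))(-n)$. One small point to check is that $\mathfrak{h}(B,-)$ is indeed a well-defined functor on $\Mot(B)$ landing in $\Mot=\Mot(\bC)$ and not merely on $\CHM(B)$: this follows because homological equivalence on $\CHM(B)$ is by definition the kernel of $R_B$, and $\mathfrak{h}(B,-)$ sends homologically trivial relative motives to homologically trivial absolute ones (its realization factors through $R_B$), so it descends to the quotient categories.

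The main obstacle, such as it is, lies in the bookkeeping around the decomposition theorem and shifts: one must make sure that the perverse cohomology objects $^p\!R^{n}f_*C_X$ appearing here match the normalization used in \eqref{BBD} and in \cref{def SCLT}, where $C_X=\bQ[d_X]$ and the conventions about Tate twists (the brutal identification $\bQ\cong\bQ(1)$ for cohomology, but not for motives) are in force; getting the indexing and the twist $(-n)$ consistent is the only place where care is needed. There is also the implicit use of the decomposition theorem to know that $\mathfrak{h}(B,\mathfrak{h}(X/B))$ genuinely recovers $\mathfrak{h}(X)$ with the stated realization $H^n(X)=\bigoplus_{a+b=n}H^a(B,^p\!R^{b}f_*C_X)$, which is \eqref{BBD}; but this is exactly the content of the Corti--Hanamura compatibility combined with the BBD decomposition theorem, both of which we are entitled to assume. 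Modulo these normalization checks, the proof is a formal consequence of \cref{equiv SCLT} and the functoriality of $\mathfrak{h}(B,-)$.
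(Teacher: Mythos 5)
Your argument is correct and is essentially the paper's own proof: the paper likewise deduces the corollary from Proposition \ref{equiv SCLT} by applying the pushforward functor $\Mot(B)\to\Mot$ (denoted $f_*$ there, realizing as $M\mapsto H(B,R_B(M))$), using its additivity and compatibility with realizations and Tate twists. Your extra checks on descent to homological equivalence and on normalizations are sound but not needed beyond what the Conventions section already provides.
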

\begin{proof}
 The statement follows directly from  Proposition \ref{equiv SCLT} by applying  the functor $f_* : \Mot(B) \rightarrow \Mot $.
\end{proof}

\begin{remark}\label{rem psy}
The decompositions \eqref{relative kunneth} and  \eqref{first kunneth} are not unique in general, since their cohomological counterparts  
\begin{align}\label{decomp coho rel} Rf_*C_X = \bigoplus_n   {}^p\! R^{n}f_*C_X \end{align}
\begin{align}\label{decomp coho abs} H(X) = \bigoplus_n H(B,{}^p\! R^{n}f_*C_X ) \end{align}
 are not. Let us stress that the above statements do not prove that  \emph{any} decomposition as in \eqref{decomp coho rel} (or as in  \eqref{decomp coho abs}) is of motivic origin. Proposition \ref{equiv SCLT} only shows  that there exists  one decomposition as in \eqref{decomp coho rel}  of motivic origin.

Deligne constructed three different decompositions of the form \eqref{decomp coho rel} which are better than the others, in the sense that each of them can be characterized as the unique decomposition satisfying some additional property (for example some behavior with respect to duality) \cite{Del_dec}. We do not know if any of these three is of motivic origin in general.

Based on Deligne's work, de Cataldo \cite{DeCataldo} showed that there are five decompositions as in \eqref{decomp coho abs} with remarkable properties. Again we do not know if those are motivic in general. The goal of the next section is to show that, under some special hypotheses which are always verified by Lagrangian fibrations, it is possible to show that de Cataldo's decompositions are motivic. This will be important in the end of Section \ref{section voisin} (proof of {Theorem \ref{main thm}}), where we will use the unicity of such a decomposition.
\end{remark}

\section{Good decomposition: from relative to absolute} \label{S:good decomp}
 
In this section we  recall the notion  of good decomposition (\cref{def de cataldo}) which is due to de Cataldo \cite{DeCataldo}. We show that the relative Lefschetz standard conjecture implies that a good decomposition is motivic (cf. \cref{rem psy}). Good decompositions do not exist in general. However, as we will see in \cref{section SY}, they do exist for Lagrangian fibrations.

 We keep the same notation as the previous section:
we fix a projective surjective morphism $f:X \longrightarrow B$ of algebraic varieties, with $X$ smooth and connected, and  a relatively ample divisor $\eta$.
\begin{definition}\label{def de cataldo}(De Cataldo \cite{DeCataldo})
The pair $(f,\eta)$ admits a {\em good decomposition\/} if there exists a decomposition 
\begin{align}\label{def good}
H^*(X) = \bigoplus_n H^*(B, ^p\! \! R^nf_\ast C_X)
\end{align}
splitting the perverse filtration and such that the map induced by the cup-product with $\eta$ is homogeneous of degree two (i.e., it is 2-graded).
In this case the induced relative primitive decomposition of $H^*(X)$ is called a {\em good primitive decomposition}.
\end{definition}
\begin{remark}\label{duality good} (cf.  \cite[Proposition 2.6.3]{DeCataldo})
In general a good decomposition does not exist. Also, it may exist for some relatively ample divisors but not for all and in general it does depend on the given divisor.
If a good decomposition exists then it is unique, and it coincides with all the splittings constructed by Deligne and de Cataldo.
In particular, it is self-dual (i.e.  Poincar\'e  duality preserves the direct sum decomposition).
The  unicity and autoduality hold also for  the induced good primitive decomposition. 
\end{remark}

The notion of good decomposition is crucial in that it allows to pass from relative Lefschetz to absolute Lefschetz. This is attested by the following proposition:

\begin{proposition}\label{motivic good decomposition} Let $X$ be a smooth projective variety with a morphism $f\colon X\to B$, and let $\eta$ be a relatively ample divisor.
Assume that:
\begin{enumerate}
\item The pair $(f,\eta)$ verifies the relative Lefschetz standard conjecture (\cref{def SCLT}),
\item The pair $(f,\eta)$ admits a good decomposition (\cref{def de cataldo}).
\end{enumerate}
Then the good decomposition is motivic and each motive in the decomposition is isomorphic to its dual (up to a Tate twist).
Moreover, the same statement holds true also for the good primitive decomposition.
\end{proposition}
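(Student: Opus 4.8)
The plan is to extract from the relative Lefschetz standard conjecture enough \emph{motivic} operators to reproduce de Cataldo's good decomposition by a purely formal construction, and then to appeal to its uniqueness. First I would apply Proposition \ref{equiv SCLT}: the relative Lefschetz standard conjecture provides a motivic decomposition $\mathfrak{h}(X/B)=\bigoplus_n\mathfrak{h}^n(X/B)$ of $\mathfrak{h}(X/B)\in\Mot(B)$ with $R_B(\mathfrak{h}^n(X/B))$ concentrated in perverse degree $n$, together with the motivic operator $\cup\eta\in\End_{\Mot(B)}\mathfrak{h}(X/B)$ and, for each $n\ge 0$, a motivic isomorphism $\mathfrak{h}^{-n}(X/B)(-n)\cong\mathfrak{h}^n(X/B)$ realizing the inverse of the relative hard Lefschetz isomorphism $\cup\eta^n\colon{}^p\!R^{-n}f_\ast C_X\cong{}^p\!R^{+n}f_\ast C_X[2n]$. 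In other words, $\bigoplus_n\mathfrak{h}^n(X/B)$ carries, inside the pseudo-abelian category $\Mot(B)$, the full structure of a ``Lefschetz module'': a perverse-graded object equipped with a hard Lefschetz operator and all of its inverses.

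Next I would observe that Deligne's and de Cataldo's constructions of the canonical decomposition of $Rf_\ast C_X$, and of the induced relative primitive decomposition, are functorial in the Lefschetz module: the relevant projectors are given by universal noncommutative expressions in the Lefschetz operator and its hard Lefschetz inverses, with no further choices. Carrying out these expressions in $\End_{\Mot(B)}\mathfrak{h}(X/B)$ produces motivic idempotents; since the realization $R_B\colon\Mot(B)\to D^b(B)$ is faithful, the relations among them (orthogonality, completeness, and $2$-gradedness with respect to $\cup\eta$) hold in $\Mot(B)$ as soon as they hold after $R_B$, so one obtains a motivic refinement of $\mathfrak{h}(X/B)=\bigoplus_n\mathfrak{h}^n(X/B)$ realizing Deligne's canonical and relative primitive decompositions of $Rf_\ast C_X$. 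Applying $f_\ast\colon\Mot(B)\to\Mot$ transports this to a motivic decomposition of $\mathfrak{h}(X)$ whose realization is the corresponding splitting of the perverse filtration of $H^*(X)$. Since, by hypothesis (2), a good decomposition exists, the uniqueness statement of Remark \ref{duality good} identifies this realized splitting with the good decomposition (resp.\ with the good primitive decomposition). Hence both are motivic.

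It remains to check that each summand is isomorphic to its dual up to a Tate twist. Poincar\'e duality for the smooth projective variety $X$ is motivic, $\mathfrak{h}(X)^\vee\cong\mathfrak{h}(X)(d_X)$; dualizing the motivic good decomposition therefore produces another motivic decomposition of $\mathfrak{h}(X)$, whose realization is the Poincar\'e dual of the good decomposition. By the autoduality recorded in Remark \ref{duality good} the latter is again the good decomposition, merely reindexed and Tate-twisted, and faithfulness of $R\colon\Mot\to\Vect$ then forces the duality isomorphism to match each summand with the dual of the summand of opposite index up to a Tate twist. The same reasoning applies to the good primitive decomposition.

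The step I expect to be the main obstacle is verifying that the construction of the good decomposition is genuinely \emph{formal} — expressible purely in terms of $\cup\eta$ and the hard Lefschetz inverses supplied by the relative Lefschetz standard conjecture — so that it can be carried out verbatim in $\Mot(B)$; this requires disentangling Deligne's and de Cataldo's arguments from their sheaf-theoretic context and keeping careful track of the two gradings (by perverse degree and by cohomological degree) in the primitive refinement. It is precisely hypothesis (2), via de Cataldo's uniqueness, that guarantees the formal output is the $2$-graded, i.e.\ good, one.
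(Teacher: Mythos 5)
Your proposal hinges on the claim that Deligne's and de Cataldo's splittings are ``given by universal noncommutative expressions in the Lefschetz operator and its hard Lefschetz inverses, with no further choices'', so that they can be transcribed verbatim into $\End_{\Mot(B)}(\mathfrak{h}(X/B))$. This is exactly the point you flag as the main obstacle, and it is a genuine gap, not a routine verification: the paper itself records (Remark \ref{rem psy}) that it is \emph{not} known whether any of Deligne's three decompositions, or de Cataldo's five, are of motivic origin in general, even granting the relative Lefschetz standard conjecture. The difficulty is concrete: the motivic inverses $\gamma_n(f)$ furnished by Definition \ref{def SCLT} (and hence by Proposition \ref{equiv SCLT}) are only constrained to invert $\cup\eta^n$ on ${}^p\!R^{-n}f_*C_X$, with no prescribed behaviour in other perverse degrees, and they are not unique. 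So they do not provide an honest $\mathfrak{sl}_2$-type partner of $\cup\eta$, and a ``universal formula'' evaluated on them has no reason to realize Deligne's projectors; your appeal to faithfulness of $R_B$ cannot help here, because faithfulness only transports identities that already hold after realization, and for these particular correspondences the required identities are not known to hold. In effect, producing an algebraic operator that is $(-2)$-graded for the perverse decomposition is essentially the content of the proposition, so the route through a formal Deligne-type construction is close to circular.

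The paper's proof sidesteps this entirely and uses hypothesis (2) in a different, essential way: it constructs the projectors of the good decomposition by induction on the degree, defining $p:=\id_{\mathfrak{h}(X)}-\sum p_i$ (killing the pieces already constructed, with the $\pm n$ piece in each step obtained from the $\mp n$ piece by duality) and then setting $p_n:=\gamma_n(f)\circ p\circ(\cup\eta^n)\circ p$. The $2$-gradedness of $\cup\eta$ with respect to the (assumed, unique, self-dual) good decomposition is what guarantees that $p\circ(\cup\eta^n)\circ p$ realizes the isomorphism from degree $-n$ to degree $+n$ and zero elsewhere, so that $p_n$ is a motivic projector cutting out the degree $-n$ piece. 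In other words, the existence of the good decomposition is used as cohomological input to certify explicitly built correspondences, rather than as the target of a formal canonical construction. To repair your argument you would either have to prove the formality claim for the Deligne--de Cataldo splittings with the weak inverses actually available (which is open), or replace that step by an ad hoc construction of the projectors along the lines above.
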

\begin{proof}
The proof is similar (and actually easier) to the one of \cref{equiv SCLT} and its corollary.
In this proof, we will use the  action of a relative correspondence   on  absolute cohomology induced by the functor $\Mot(B)\to\Mot$.

We construct the motives in the good decomposition by increasing induction on the degree. By the autoduality of a  good decomposition (see \cref{duality good}),
once the motive $ \mathfrak{h}(B,\mathfrak{h}^{-n}(X/B))$ is constructed  the factor $\mathfrak{h}(B,\mathfrak{h}^{+n}(X/B))$  of the decomposition is determined by taking the dual (after  an appropriate Tate twist). 
The isomorphism between these two motives is then deduced by the first assumption.

We can therefore suppose that we have constructed the motive $\mathfrak{h}(B,\mathfrak{h}^i(X/B))$ for every cohomological degree $i$ strictly smaller than $-n$ and strictly larger than $+n$. Let $p_i$ denote  the corresponding projectors.
The goal is now to construct such a projector in degree $-n$. 
To do so, consider $\gamma_n(f)$ as in \cref{def SCLT} and define 
  \[ p:=\id_{\mathfrak{h}(X)}-\sum p_i \] 
  to be the projector that acts on $ \mathfrak{h}(X)$ by killing all  motives   already constructed (i.e., those of cohomological degree strictly smaller than $-n$ and strictly larger than $+n$).
Then, by the very definition of good decomposition, the correspondence  $p \circ \cup\eta^n\circ p$ induces an isomorphism between degree $-n$ and degree $+n$ and zero elsewhere, hence 
  \[ p_n:=p \circ \gamma_n(f) \circ p \circ \cup\eta^n\circ p\] 
  is the desired projector.

This concludes the proof for the good decomposition, and it is standard to deduce from it the case of the good primitive decomposition.
\end{proof}

\section{Ng\^o fibrations and relative Lefschetz}
\label{section waf}

We keep the notation of the previous section. In order to exhibit morphisms $f: X \rightarrow B$ verifying the assumptions of Proposition \ref{criterion SCLT}, we now introduce the concept of {\em Ng\^o fibrations.\/} An Ng\^o fibration is a special case of a \emph{weak abelian fibration}, as defined by Ng\^o \cite{Ngo10}, \cite{Ngo2}. Before giving the definitions, let us recall some terminology concerning commutative group schemes. 
\begin{definition}
Let $g:P \rightarrow B$ be a smooth commutative group $B$-scheme, with identity component $g^o:P^o \rightarrow B$. For any point $b \in B$, there exists a canonical exact sequence of connected commutative group schemes over $k(b)$, called the \emph{Chevalley dévissage}:
\[
0 \rightarrow L_b \rightarrow P_b^o \rightarrow A_b \rightarrow 0\ ,
\]
where $L_b$ is affine and connected, and maximal with these properties, and where $A_b$ is an abelian variety. 

The $\delta$-\emph{loci} are then the locally closed subvarieties $B_i$ of $B$ defined by
\[
B_i:= \{ b \in B \vert \dim_{k(b)}(L_b)=i \}
\]

Fix a prime $\ell$ and write $\overline{\mathbb{Q}}_{\ell,S}$ for the constant étale $\overline{\mathbb{Q}}_{\ell}$-adic sheaf over a base $S$ and $\overline{\mathbb{Q}}_{\ell,S}(n)$ for its $n$-th Tate twist. Let $d:= \dim_B P$. The \emph{Tate module} of $P/B$ is the constructible $\ell$-adic sheaf $T_{\ell}(P):=R^{2d-1} g_!\overline{\mathbb{Q}}_{\ell,P}(d)$ over $B$. 

We say that $P$ is \emph{polarizable} if there exists a pairing 
\[
T_{\ell}(P) \times T_{\ell}(P)\  \rightarrow\ \overline{\mathbb{Q}}_{\ell,B}(1)
\]
such that, for any $b \in B$, the null-space of the pairing at $b$ is precisely $T_{\ell}(L_b)$. 
\end{definition}
\begin{definition} \label{WAF}
A weak abelian fibration is a pair of morphisms \[(f:X \rightarrow B, g:P \rightarrow B)\] such that:
\begin{enumerate}
\item $f$ is proper;
\item $P$ is a smooth commutative group $B$-scheme;
\item there is an action $a:P \times_B X \rightarrow X$ of $P$ on $X$ over $B$;
\item $f$ and $g$ have the same pure relative dimension, denoted by $d$;
\item the action has affine stabilizers at every point $x \in X$; 
\item the Tate module $T_{\ell}(P)$ of $P/B$ is polarizable. 
\end{enumerate}
When the morphisms defining the structure of a weakly abelian fibration on $X,P,B$ as above are clear from the context, we will write $(X,B,P)$ to denote the weakly abelian fibration. 
\end{definition}

We propose the following definition:

\begin{definition}\label{def ngo} An {\em Ng\^o fibration} is a weak abelian fibration $(X,B,P)$ with the following additional property:
  \[
\codim B_i \geq i\  \ \forall i \ \in \mathbb{Z}_{\geq 0}
\]
(NB: in the literature, an Ng\^o fibration is called a ``$\delta$-regular weak abelian fibration''). 
\end{definition}


\begin{proposition}\label{relative SCLT HK}
Let $f:X \longrightarrow B$ be an Ng\^o fibration with irreducible fibers.
Then it verifies the relative Lefschetz standard conjecture (\cref{def SCLT}).
\end{proposition}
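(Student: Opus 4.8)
The plan is to deduce this from \cref{criterion SCLT} by verifying its two hypotheses for the pair $(f,\eta)$: that the fiber over the generic point $\eta_B$ of $B$ satisfies the absolute Lefschetz standard conjecture, and that each perverse sheaf ${}^p\!R^{n}f_*C_X$ is the intersection complex of a local system with full support on $B$. The $\delta$-regularity hypothesis takes care of the first point, and Ng\^o's support theorem of the second.

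For the generic fiber, I would first note that $\delta$-regularity forces $B_0$ to be dense open in $B$ (the $B_i$ with $i\geq 1$ are finitely many locally closed subsets of codimension $\geq i\geq 1$), so that $\eta_B\in B_0$ and hence $P^o_{\eta_B}$ is an abelian variety, while $P_{\eta_B}$ (having finitely many components) is a smooth proper commutative $k(B)$-group with abelian identity component. By \cref{WAF}(5) the stabilizer of any point of $X_{\eta_B}$ is affine, so its identity component is an affine subgroup scheme of an abelian variety, hence trivial; thus all stabilizers are finite and, as $\dim X_{\eta_B}=d=\dim P_{\eta_B}$, every orbit is $d$-dimensional. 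Since $X_{\eta_B}$ is irreducible and (generic smoothness in characteristic zero) smooth, the open orbit must be all of $X_{\eta_B}$, for its complement would otherwise be a proper closed $P_{\eta_B}$-stable subvariety still carrying $d$-dimensional orbits. So $X_{\eta_B}$ is a torsor under $P_{\eta_B}/F$ for the common finite stabilizer $F$; passing to a geometric point $\bar\eta_B$ this torsor becomes trivial, so $X_{\bar\eta_B}\cong P_{\bar\eta_B}/F$, which — being irreducible (hence connected), smooth, proper and commutative — is an abelian variety. Using the reduction-to-$\bC$ device of \S\ref{S:conventions}, the generic fiber is then an abelian variety as far as the Lefschetz standard conjecture is concerned, and the conjecture is classical for abelian varieties; this gives hypothesis (1).

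For the full-support statement, the input is Ng\^o's support theorem applied to the $\delta$-regular weak abelian fibration $(X,B,P)$ with $X$ smooth and $f$ proper (recalled later in this section; see \cite{Ngo10}): it bounds the supports of the simple summands of $Rf_*C_X$ furnished by the decomposition theorem, and together with the irreducibility of the fibers — equivalently, $R^{2d}f_*C_X$ is the constant sheaf, so no summand is supported on a proper closed subvariety — it forces all these supports to equal $B$. Hence each semisimple perverse sheaf ${}^p\!R^{n}f_*C_X$ is a direct sum of $IC_B(\mathcal{L}_j)$'s with $\mathcal{L}_j$ simple local systems on dense opens, and restricting to a common dense open and using the additivity of $IC_B$ one writes ${}^p\!R^{n}f_*C_X=IC_B(\mathcal{L}_n)$ for a semisimple local system $\mathcal{L}_n$; in the analytic setting this is transferred from the $\ell$-adic statement through the comparison isomorphism, which matches the supports occurring in the two decomposition theorems. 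This gives hypothesis (2), and \cref{criterion SCLT} yields the claim.

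I expect the real work to be the second step: quoting Ng\^o's support theorem in exactly the form needed and checking that the irreducibility of the fibers is precisely the extra input excluding summands with small support (this is also why assumption (4) is flagged as restrictive in the introduction). The first step is more elementary but needs some care with the conventions of \S\ref{S:conventions} when descending the generic fiber to an abelian variety over $\bC$.
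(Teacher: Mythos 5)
Your proposal is correct and follows essentially the same route as the paper: the paper's proof likewise applies \cref{criterion SCLT}, noting that the generic fiber is (possibly after a finite extension) an abelian variety, for which the Lefschetz standard conjecture is known, and that Ng\^o's support theorem combined with the irreducibility of the fibers yields full supports in the decomposition theorem. Your write-up merely fills in the details (finiteness of stabilizers via $\delta$-regularity, the torsor argument, and the reduction to $\bC$) that the paper compresses into the remark that the conjecture is stable under finite extensions.
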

\begin{proof}
We want to apply \cref{criterion SCLT} to this setting. 
The first assumption is satisfied because the generic fiber is (possibly after a finite extension) an abelian variety and the Lefschetz standard conjecture is known for abelian varieties \cite{Lieb} (and  is stable under finite extensions).
The second assumption is satisfied since by Ng\^o support theorem \cite{Ngo10} an Ng\^o fibration with irreducible fibers has only full supports in the decomposition theorem. \end{proof}

\begin{corollary}  Let $f\colon X\to B$ be a Beauville--Mukai integrable system, i.e. the relative compactified  Jacobian of a linear system of curves on a K3 surface \cite{Mukai84}, \cite{Beauville91}. Assume that $f$ has irreducible fibers, i.e. that all the curves in the linear system are integral.
Then $f\colon X\to B$ verifies the relative Lefschetz standard conjecture.
\end{corollary}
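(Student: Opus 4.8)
The plan is to deduce this corollary directly from Proposition~\ref{relative SCLT HK} by verifying that a Beauville--Mukai integrable system with integral fibers is an Ng\^o fibration with irreducible fibers. The irreducibility of fibers is precisely the hypothesis that all curves in the linear system are integral: the compactified Jacobian of an integral curve is irreducible. So the entire content is to exhibit the structure of an Ng\^o fibration on $f\colon X\to B$, and then invoke Proposition~\ref{relative SCLT HK}.

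First I would recall the geometric setup: $S$ is a K3 surface, $\mathcal{L}$ a line bundle on $S$, $B = |\mathcal{L}|$ (or an open/linear subsystem thereof) the linear system parametrizing curves $C\subset S$, and $X = \overline{\Jac}^{\,d}$ the relative compactified Jacobian of a fixed degree, which by Beauville and Mukai is a (projective) HK variety of K3$^{[n]}$-type, Lagrangian-fibered over $B$. The associated group scheme is $g\colon P\to B$, the relative \emph{generalized} (degree-zero) Jacobian, i.e. $P = \underline{\Pic}^{\,0}(\mathcal{C}/B)$ where $\mathcal{C}\to B$ is the universal curve; this is a smooth commutative group $B$-scheme acting on $X$ fiberwise by tensor product, with the same relative dimension $d = \dim B = g(C)$ as $f$. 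I would check the axioms of Definition~\ref{WAF}: properness of $f$ is clear (compactified Jacobians of families of projective curves are proper); smoothness and the group structure of $P$ are classical for the Jacobian of a family of reduced irreducible (hence integral) curves; the action $a\colon P\times_B X\to X$ and equality of relative dimensions are standard; affineness of stabilizers holds because for a rank-one torsion-free sheaf $F$ on an integral curve $C$, the stabilizer of $F$ under $\Pic^0(C)$ is trivial when $F$ is a line bundle and in general is a product of ${\mathbb G}_m$'s coming from the normalization — in any case affine; and polarizability of $T_\ell(P)$ is the Weil pairing on the Tate module of the Jacobian (the autoduality of the generalized Jacobian of an integral curve, with the null-space over $b$ being exactly the toric part $T_\ell(L_b)$, cf. Ng\^o's treatment of the Hitchin fibration and the analogous statement for compactified Jacobians).

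The main obstacle — and the step deserving the most care — is the $\delta$-regularity inequality $\codim B_i\geq i$ for the $\delta$-loci. Here $L_b$ is the affine (toric, since $C$ is integral, by the Chevalley d\'evissage of $\Pic^0$ of an integral projective curve) part of $\Pic^0(C_b)$, and $\dim L_b = \delta(C_b)$ is the total $\delta$-invariant of the singularities of $C_b$, equivalently $p_a(C_b) - g(\widetilde{C}_b)$. So $B_i$ is the locus of curves in the linear system with total $\delta$-invariant exactly $i$. The required bound, $\codim_B\{C : \delta(C)=i\}\geq i$, is a statement about how severely singular a curve in a complete (or sufficiently generic) linear system on a K3 surface can be; for linear systems of integral curves on K3 surfaces this $\delta$-regularity is known — it follows from the fact that imposing a singularity of $\delta$-invariant $i$ on a member of the linear system is (at least) $i$ conditions, together with the characterization of compactified Jacobians of K3 linear systems as known examples of $\delta$-regular weak abelian fibrations (this is exactly the context in which Ng\^o-type support theorems have been applied to Beauville--Mukai systems in the literature, e.g. work around the P=W and multiplicativity results). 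I would cite the appropriate reference for this codimension estimate rather than reprove it.

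Once these verifications are in place, $f\colon X\to B$ is an Ng\^o fibration (Definition~\ref{def ngo}) with irreducible fibers, so Proposition~\ref{relative SCLT HK} applies verbatim and yields that $(f,\eta)$ verifies the relative Lefschetz standard conjecture, for $\eta$ any relatively ample divisor on $X$. This completes the proof. I would keep the write-up short, essentially organized as: (i) identify $P$ and the action; (ii) check the six axioms of a weak abelian fibration, with polarizability from the Weil pairing on the relative Jacobian; (iii) invoke the known $\delta$-regularity of K3 linear systems of integral curves for the codimension inequality; (iv) apply Proposition~\ref{relative SCLT HK}.
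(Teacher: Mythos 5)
Your overall reduction is the same as the paper's: check that the Beauville--Mukai system with integral curves is an Ng\^o fibration with irreducible fibers and then invoke Proposition \ref{relative SCLT HK}. The difference lies in how the hypotheses of Definition \ref{WAF} and the $\delta$-regularity of Definition \ref{def ngo} are verified. The paper does not check them axiom by axiom for the linear system: it observes that \emph{any} Lagrangian fibration automatically satisfies all the requirements except possibly polarizability (this is the content of the proof of Proposition \ref{LSV_Ngo}, where the group scheme and affine stabilizers come from the integrable-system framework of Arinkin--Fedorov and $\delta$-regularity follows from the existence of the symplectic form, as in Ng\^o's \cite[Section 2]{Ngo2}), and then quotes Ng\^o's construction of the Weil pairing for relative compactified Jacobians \cite[p.~76]{Ngo10} for polarizability. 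Your treatment of polarizability (Weil pairing, null-space equal to the toric part) matches the paper's citation, but your handling of $\delta$-regularity is the weak point: the assertion that ``imposing a singularity of $\delta$-invariant $i$ on a member of the linear system is at least $i$ conditions'' is not an elementary dimension count for a \emph{fixed} linear system on a K3 --- it is a genuine theorem, and the clean way to get it (and the one the literature you allude to actually uses) is precisely the symplectic/Lagrangian argument of Ng\^o that the paper invokes. So your write-up would be correct once you replace the vague codimension-count citation by that argument (or by an explicit reference proving $\codim B_i\geq i$ for these systems); as it stands, that step is doing real work and is left unsupported, whereas the paper's route gets it for free from the Lagrangian structure.
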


\begin{proof} Since $f$ is a Lagrangian fibration, it satisfies all the requirements in the definition of Ng\^o fibration except possibly the polarizability (cf. the proof of Proposition \ref{LSV_Ngo} below). The polarizability is proven by Ng\^o for compactified relative Jacobians of curves \cite[page 76]{Ngo10} (cf. also \cite[proof of Theorem 3.3.1]{DeC17}). It follows that $f$ is an Ng\^o fibration and Proposition \ref{relative SCLT HK} applies.
\end{proof}
In addition to the above corollary, \cref{relative SCLT HK} applies also to LSV tenfolds, as we will explain in Section \ref{S:LSV}.

\section{The Shen--Yin decomposition is good}\label{section SY}  
 The object of the present section is to recall a remarkable result of Shen and Yin concerning Lagrangian fibrations, which implies in particular the existence of a good decomposition (see \cref{def de cataldo}).

Throughout this section, we fix a Lagrangian fibration $f:X \longrightarrow B$. The dimension of $X$ will be denoted by $\dim X = 2r$.
Let $q$ be the Beauville--Bogomolov form, let $\beta$ be the Lagrangian class (i.e. the pull-back of an ample divisor on $B$) and let $\eta$  be a relatively ample divisor.
Note that $q(\beta)=0$, and that we can (and will) modify $\eta$ in a unique way by adding a $\mathbb Q$-multiple of $\beta$ in order to have the vanishing $q(\eta)=0$ \cite[(22)]{SY21}. Up to clearing denominators, we can further assume that $\eta$ is an integral class.   Then, as first observed by Verbitsky, the vanishing $\eta^{r+1}=\beta^{r+1}=0$ hold.

\begin{theorem}\label{thm SY}(Shen--Yin \cite{SY21}) Let the notation be as above. Then the following holds:
\begin{enumerate}
\item\label{good SY} The pair $(f,\eta)$ admits a good primitive decomposition (see \cref{def de cataldo}) 
\[ H^*(X)= \bigoplus_{0\leq i \leq r} \bQ[\eta]/(\eta^{r-i+1}) \otimes W^i\ . \]
\item\label{main SY}  As  $\bQ[\eta,\beta]$-module, $H^*(X)$ can be written as
\[ H^*(X)= \bigoplus_{0\leq i,j \leq r} \bQ[\eta]/(\eta^{r-i+1}) \otimes \bQ[\beta]/(\beta^{r-j+1}) \otimes V^{i,j}\ \]
for certain subspaces $V^{i,j}\subset H^{i+j}:=H^{i+j}(X)$,
\item There is an equality $W^i=\bigoplus_{0\leq j\leq r} \bQ[\beta]/(\beta^{r-j+1}) \otimes V^{i,j}$.
\end{enumerate}
\end{theorem}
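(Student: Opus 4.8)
Since this result is due to Shen and Yin, my plan is to import their theorem as item (2) and then to extract items (1) and (3), together with the goodness of the decomposition in the sense of \cref{def de cataldo}, by unwinding definitions. I would begin by reconciling normalizations: Shen and Yin work with a relatively ample class and with a pullback of an ample class from the base, and after replacing the relatively ample class by its unique representative with $q(\cdot)=0$ (as done above) these become our $\eta$ and $\beta$; Verbitsky's vanishings $\eta^{r+1}=\beta^{r+1}=0$, recalled just before the statement, are theirs. Since $q(\beta)=0$, in the Fujiki relation for $\int_X\eta^r\beta^r$ every term must pair each $\eta$ with a $\beta$, so this integral is a nonzero multiple of $q(\eta,\beta)^r$; as $\int_X\eta^r\beta^r\neq 0$ one gets $q(\eta,\beta)\neq 0$, and hence by Verbitsky's structure theorem the cup-product presents $H^*(X)$ as a module over $\bQ[\eta,\beta]/(\eta^{r+1},\beta^{r+1})$ with no further relation between $\eta$ and $\beta$. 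Item (2) --- the splitting of this module into ``boxes'' $\bQ[\eta]/(\eta^{r-i+1})\otimes\bQ[\beta]/(\beta^{r-j+1})$, one for each vector of a basis of a primitive space $V^{i,j}\subseteq H^{i+j}(X)$ --- is then the content of \cite{SY21}, which I would take as given rather than reprove: it rests on the ``Perverse $=$ Hodge'' symmetry together with the Looijenga--Lunts--Verbitsky Lie algebra, on top of relative hard Lefschetz for $\eta$ and hard Lefschetz on the base for $\beta$.

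Granting (2), items (1) and (3) should be formal. I would set $W^i:=\bigoplus_{0\leq j\leq r}\bQ[\beta]/(\beta^{r-j+1})\otimes V^{i,j}$, so that (2) immediately reads $H^*(X)=\bigoplus_{0\leq i\leq r}\bQ[\eta]/(\eta^{r-i+1})\otimes W^i$; then (3) holds by definition, and it remains to check that this is a \emph{good} primitive decomposition, i.e. that it is induced by a splitting of the perverse filtration in which $\cup\eta$ is homogeneous of degree two. Here I would use that $\cup\eta$ raises the perverse degree by exactly two on the associated graded ($\eta$ being relatively ample), that $\cup\beta$ preserves it ($\beta$ being a pullback from the base), and that, by relative hard Lefschetz, the $\eta$-Lefschetz string $\bigoplus_a\eta^aW^i$ occupies perverse degrees symmetric about zero, so that $W^i$ --- hence each $V^{i,j}$ --- sits in perverse degree $i-r$. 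Assigning to $\eta^a\beta^b v$, $v\in V^{i,j}$, the perverse degree $i-r+2a$ then defines a grading of $H^*(X)$ in which $\cup\eta$ sends degree $n$ exactly to degree $n+2$ (and to zero once $a=r-i$, by Verbitsky's vanishing); that this grading splits the perverse \emph{filtration}, rather than merely computing its associated graded, is the multiplicativity of the perverse filtration for Lagrangian fibrations, again from \cite{SY21}. The relative primitive decomposition it induces is $\bigoplus_i\bQ[\eta]/(\eta^{r-i+1})\otimes W^i$, which is item (1), and de Cataldo's uniqueness statement (\cref{duality good}) then identifies it with the canonical splittings.

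The only genuine obstacle is item (2) itself: that the $\eta$- and $\beta$-Lefschetz structures are simultaneously compatible and exhibit $H^*(X)$ as a free bigraded module over $\bQ[\eta,\beta]/(\eta^{r+1},\beta^{r+1})$. This is the substance of \cite{SY21}, and I would take it as a black box; everything else is bookkeeping with conventions and perverse degrees.
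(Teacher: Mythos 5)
Your proposal is correct and follows essentially the same route as the paper: the paper's proof simply cites \cite{SY21} (the vanishings via their equation (22), and both the existence of the $V^{i,j}$ and the relation to the good primitive decomposition via their Theorem 3.1 and its proof), just as you import item (2) as a black box and treat (1), (3) and the goodness of the splitting as bookkeeping. Your extra unwinding of the perverse-degree conventions and the reduction of (1) and (3) to the definition of $W^i$ is consistent with what \cite{SY21} provides, so there is no gap.
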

\begin{proof}
The vanishings are explained in \cite[(22)]{SY21}. 
The existence of $V_{i,j}$ is explained in \cite[Theorem 3.1 and its proof]{SY21}. 
The relation with the good primitive decomposition is in \cite[Theorem 3.1 and its proof]{SY21}. 
\end{proof}

\begin{remark}\label{cor unicity}
The spaces $V_{i,j}$ as in \cref{thm SY} \eqref{main SY} are unique.
Indeed, one has 
  \[V_{i,j}= H^{i+j} \cap \Ker(\cup \eta^{r-i+1}) \cap \Ker(\cup \beta^{r-j+1})\ .\]
\end{remark}

\begin{proposition}\label{decomp eta}
Keep the notation as above and suppose that the fibration $f$ is an Ng\^o fibration whose fibers are all irreducible.
Then the good primitive decomposition in \cref{thm SY}\eqref{good SY} is motivic and each motive in the decomposition is isomorphic to its dual (up to a Tate twist). 
 \end{proposition}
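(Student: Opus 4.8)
The plan is to obtain this as a direct application of Proposition \ref{motivic good decomposition}, after verifying its two hypotheses in the present setting.

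I would begin with a small bookkeeping remark on $\eta$. In the statement of Theorem \ref{thm SY} the original relatively ample divisor was replaced by $\eta + c\beta$ for a suitable $c \in \bQ$ (and denominators were then cleared) so that $q(\eta)=0$ and $\eta$ is integral. This modified $\eta$ is still relatively ample: $\beta$ is pulled back from $B$, so it restricts to zero on every fiber of $f$, and relative ampleness depends only on the restrictions to fibers. Thus we are genuinely in the situation of Proposition \ref{motivic good decomposition} for this $\eta$.

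Hypothesis (1) of Proposition \ref{motivic good decomposition}, namely that $(f,\eta)$ verifies the relative Lefschetz standard conjecture, is exactly Proposition \ref{relative SCLT HK}, which applies because $f$ is assumed to be an Ng\^o fibration with irreducible fibers (note that the proof of Proposition \ref{relative SCLT HK}, via Proposition \ref{criterion SCLT}, works for any relatively ample $\eta$, in particular the normalized one). Hypothesis (2), that $(f,\eta)$ admits a good decomposition, follows from Theorem \ref{thm SY}\eqref{good SY}: Shen--Yin exhibit a good \emph{primitive} decomposition of $H^*(X)$, and by Definition \ref{def de cataldo} a good primitive decomposition is by construction induced by a good decomposition, so in particular a good decomposition exists.

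With both hypotheses satisfied, Proposition \ref{motivic good decomposition} applies and yields that the good decomposition is motivic with each summand isomorphic to its dual up to a Tate twist; moreover its last assertion gives the same conclusion for the good primitive decomposition, which by the uniqueness recalled in Remark \ref{duality good} is precisely the Shen--Yin decomposition of Theorem \ref{thm SY}\eqref{good SY}. Since the substance of the argument is carried by Propositions \ref{relative SCLT HK} and \ref{motivic good decomposition} together with the Shen--Yin theorem, there is no real obstacle here; the only points needing a little care are the normalization of $\eta$ and the observation that a ``good primitive decomposition'' presupposes a ``good decomposition'', so that the hypotheses of Proposition \ref{motivic good decomposition} are literally met.
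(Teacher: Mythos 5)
Your proposal is correct and follows the paper's own argument: the paper proves this by invoking Proposition \ref{relative SCLT HK} (valid since $f$ is an Ng\^o fibration with irreducible fibers) to get the relative Lefschetz standard conjecture, and then applying Proposition \ref{motivic good decomposition}, with the good (primitive) decomposition supplied by Theorem \ref{thm SY}. Your extra remarks on the normalization of $\eta$ (that adding a multiple of $\beta$ preserves relative ampleness) and on a good primitive decomposition presupposing a good decomposition are sensible details left implicit in the paper, not deviations from its route.
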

 \begin{proof}
 By \cref{relative SCLT HK} such a Lagrangian fibration verifies the relative Lefschetz standard conjecture. 
 We can then apply \cref{motivic good decomposition}.
 \end{proof}


 Recall that birational HK varieties have (non-canonically) isomorphic cohomology rings and motives \cite{Huy,Riess}, see Remark \ref{rem huy}.
This is implicit in the following statement.
\begin{proposition}\label{decomp two fibrations}  Let $\eta$ and $\beta$ be as above.
Suppose that:
\begin{enumerate}
\item There exists a  HK variety $Y$ birational to $X$ together with a Lagrangian fibration $g:Y \rightarrow B'$ such that $\beta$ is relatively ample for $g$ and $\eta$ is a Lagrangian class for $g$,
\item Both $f$ and $g$ are Ng\^o fibrations with irreducible fibers (or can be deformed to fibrations with this property).
\end{enumerate}
 Then  the  decomposition in \cref{thm SY}\eqref{main SY} is motivic and each motive in the decomposition is isomorphic to its dual (up to a Tate twist).
In particular the standard conjectures hold true for $X$.
 \end{proposition}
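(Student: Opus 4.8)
The plan is to obtain the projectors onto the summands of the decomposition of \cref{thm SY}\eqref{main SY} as composites of two families of motivic projectors that are already at our disposal: those of the $\eta$-primitive decomposition attached to $f$, and those of the $\beta$-primitive decomposition attached to $g$. Since $f$ is (or deforms to) an Ng\^o fibration with irreducible fibers, \cref{decomp eta} gives that the good primitive decomposition of $(f,\eta)$ from \cref{thm SY}\eqref{good SY} is motivic: for all $0\le i\le r$, $0\le k\le r-i$ there is an algebraic projector $P^{i}_{k}$ on $\mathfrak{h}(X)$ realizing $\eta^{k}W^{i}$, and the $P^{i}_{k}$ form a complete orthogonal system. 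For $g\colon Y\to B'$ the relatively ample class is $\beta$ (already satisfying $q(\beta)=0$) and a Lagrangian class is $\eta$, so the analogue of \cref{thm SY}\eqref{good SY} for $(g,\beta)$ reads $H^{*}(Y)=\bigoplus_{0\le j\le r}\bQ[\beta]/(\beta^{r-j+1})\otimes\widetilde W^{j}$, and by \cref{decomp eta} applied to $g$ it too is motivic. Transporting along the birational identification $\mathfrak{h}(Y)\cong\mathfrak{h}(X)$ of \cref{rem huy} — a graded ring isomorphism which on $H^{2}$ is the canonical one, hence fixes $\eta$ and $\beta$ and, by the formula of \cref{cor unicity}, preserves the subspaces $V^{i,j}$ — we obtain algebraic orthogonal projectors $Q^{j}_{l}$ on $\mathfrak{h}(X)$, $0\le j\le r$, $0\le l\le r-j$, realizing $\beta^{l}\widetilde W^{j}$.

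Next one checks that the two families commute and that $P^{i}_{k}\circ Q^{j}_{l}$ realizes $\eta^{k}\beta^{l}V^{i,j}$. By \cref{thm SY} for $f$ one has $\eta^{k}W^{i}=\bigoplus_{j,l}\eta^{k}\beta^{l}V^{i,j}$, while its mirror for $g$ gives $\beta^{l}\widetilde W^{j}=\bigoplus_{i,k}\eta^{k}\beta^{l}V^{i,j}$ — here the identity $\widetilde V^{j,i}=V^{i,j}$ is forced, both being cut out inside $H^{i+j}$ by the same two kernels via the formula of \cref{cor unicity}. Thus the decomposition $H^{*}(X)=\bigoplus_{i,j,k,l}\eta^{k}\beta^{l}V^{i,j}$ of \cref{thm SY}\eqref{main SY} is a common refinement, compatible with both the $\eta$-primitive and the $\beta$-primitive decompositions, and $\eta^{k}W^{i}\cap\beta^{l}\widetilde W^{j}=\eta^{k}\beta^{l}V^{i,j}$. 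For two direct sum decompositions admitting such a refinement the associated projectors commute — each is a partial sum of the mutually orthogonal projectors of the refinement — and because the realization functor is faithful on homological motives, this commutation, checked on realizations, already holds in $\Mot$. Hence $P^{i}_{k}\circ Q^{j}_{l}$ is an algebraic projector realizing $\eta^{k}\beta^{l}V^{i,j}$, and the collection of these exhibits \cref{thm SY}\eqref{main SY} as motivic. Moreover each summand is isomorphic to its dual up to a Tate twist: Poincar\'e duality carries the $\eta^{k}\beta^{l}V^{i,j}$-piece to the $\eta^{r-i-k}\beta^{r-j-l}V^{i,j}$-piece up to twist, and these two are isomorphic by an algebraic map assembled from $\cup\eta$, $\cup\beta$, the bigrading projectors and the relative Lefschetz inverses furnished by \cref{relative SCLT HK} for $f$ and for $g$.

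For the final assertion one combines the motivic bigraded decomposition with the algebraicity of those inverse Lefschetz operators: for an ample class of the shape $L=a\eta+b\beta$ with $a,b>0$ the operator $\cup L^{n}$ preserves each $(i,j)$-isotypic part and acts on it through the $\fsl_{2}\times\fsl_{2}$-action underlying \cref{thm SY}, so on the pieces where it should be an isomorphism its inverse is a universal polynomial in $\cup\eta$, $\cup\beta$ and their inverses; composed with the bigrading projectors this yields an algebraic inverse of hard Lefschetz for $X$. Hence the Lefschetz standard conjecture holds for $X$, and over $\bC$ this implies all the standard conjectures.

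The heart of the matter is the second paragraph: recognizing \cref{thm SY}\eqref{main SY} as the \emph{motivic} common refinement of two independently produced motivic decompositions. What makes it go through is, on one hand, that \cref{decomp eta} supplies precisely Shen--Yin's (unique) good primitive decompositions — so that the $\fsl_{2}\times\fsl_{2}$-compatibility recorded in \cref{thm SY} is available — and, on the other hand, the faithfulness of the realization functor, which upgrades a numerical commutation to an identity of correspondences. The point that $g$ lives on the birational model $Y$ rather than on $X$ is absorbed by \cref{rem huy}, and the ``or can be deformed'' clauses of the hypotheses are handled by specialization exactly as in the proof of \cref{extension}.
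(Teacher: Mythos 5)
Your proposal is correct and follows essentially the same route as the paper: by the uniqueness of the Shen--Yin bigraded decomposition (\cref{cor unicity}) it is the common refinement of the two good primitive decompositions for $(f,\eta)$ and $(g,\beta)$, each of which is motivic and selfdual by \cref{decomp eta} (the second transported to $X$ via \cref{rem huy}), and composing the two families of algebraic projectors gives motivicity and selfduality of the bigraded pieces. The only (minor) divergence is the final step, where the paper simply observes that the Shen--Yin decomposition refines the K\"unneth decomposition, so the K\"unneth projectors are algebraic with selfdual factors --- an equivalent formulation of the Lefschetz standard conjecture --- which is cleaner than your sketched direct inversion of $\cup L^n$ for $L=a\eta+b\beta$, though that variant can also be completed using the algebraic projectors and Cayley--Hamilton.
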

 \begin{proof}
The Shen--Yin decomposition from \cref{thm SY}\eqref{main SY} is independent of the choice of $f$ and $g$, and only depends on $\eta$ and $\beta$  (see \cref{cor unicity}).

By \cref{thm SY}\eqref{good SY} if we forget the action of $\beta$ then we get the good decomposition with respect to $(f,\eta)$ and the latter is motivic by \cref{decomp eta}.
Similarly we can forget the action of $\eta$ and deduce that it is motivic as it is the good decomposition with respect to $(g,\beta)$.

If we now put these two decompositions together we deduce that  the decomposition in \cref{thm SY}\eqref{main SY} is motivic.
Moreover, as each factor of the two decompositions is selfdual we deduce that the factors in the Shen--Yin decomposition from \cref{thm SY}\eqref{main SY}      are so.
On the other hand the Shen--Yin decomposition refines the K\"unneth decomposition. Hence we have shown that the K\"unneth decomposition is motivic and that each of its factors is selfdual. This is one of the equivalent formulations of the Lefschetz standard conjecture.
 \end{proof}

\section{Voisin's trick and the end of the proof} \label{section voisin}

 We keep the notation from the previous section. In particular $X$ is a HK admitting a Lagrangian fibration and $\beta$ and $\eta$ are divisor classes on $X$, respectively Lagrangian and relatively ample,  such that $q(\beta)=q(\eta)=0$.

We have seen in \cref{decomp eta} a criterion to show that the decomposition with respect to the action of $\eta$ on the cohomology of $X$ is motivic, but a priori this does not apply to $\beta$, since $\beta$ needs to satisfy the assumptions of Proposition \ref{decomp two fibrations}. 
The goal of this section is to recall an ingenious idea of Voisin \cite{Voi20} which allows, roughly speaking, to invert the role of $\eta$ and $\beta$ (in a way similar to \cref{decomp two fibrations}). This will allow to conclude the proof of \cref{main}.

\begin{remark}\label{trick voisin}(Voisin \cite{Voi20})
Suppose that the Picard rank of $X$ is two (so that $\eta$ and $\beta$ are unique up to multiplication by a positive scalar).
Then there are two cases:
\begin{enumerate}
\item Both $\eta$ and $\beta$ are potentially Lagrangian (in the sense of \cref{def lagrangian}),
\item Only $\beta$ is potentially Lagrangian.
\end{enumerate}
In the first case, if we assume \cref{conj SYZ}, then there is a second Lagrangian fibration with respect to which the roles of $\eta$ and $\beta$ are inverted.

In the second case, (without assuming  \cref{conj SYZ}) the work of Huybrechts and Riess  \cite{Huy,Riess} implies the existence of an automorphism of the motive $\mathfrak{h}(X)$ which commutes with the cup-product and which inverts $\eta$ and $\beta$ (up to scalars). 

 By work of Boucksom and Huybrechts, the second case happens precisely when there is a prime exceptional divisor, i.e. a reduced and irreducible divisor  $\Theta$ with  $q(\Theta)<0$.
\end{remark}

\begin{theorem}\label{main thm}
Assume the hypothesis of \cref{main}.  
Then  the Shen--Yin decomposition from \cref{thm SY}\eqref{main SY} is motivic and each motive in the decomposition is isomorphic to its dual (up to Tate twist).
In particular the Lefschetz standard conjecture holds true for $X$.  
 \end{theorem}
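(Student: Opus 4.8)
The plan is to combine the two cases from Voisin's trick (Remark \ref{trick voisin}) with the results of Section \ref{section SY}. Recall that the hypotheses of \cref{main} split, via assumption (1) on the Picard rank, into the two cases of Remark \ref{trick voisin}: either both $\eta$ and $\beta$ are potentially Lagrangian, or only $\beta$ is. In the first case, invoking the SYZ conjecture (assumption (2) of \cref{main}), which holds by hypothesis, there is a genuine second Lagrangian fibration $g\colon Y\to B'$ on a birational HK model $Y$ of $X$ for which $\beta$ is relatively ample and $\eta$ is a Lagrangian class. Assumptions (3) and (4) of \cref{main} guarantee that both $f$ and $g$ (possibly after a deformation, as allowed in \cref{decomp two fibrations}) are Ng\^o fibrations with irreducible fibers. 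So in this case the statement follows directly from \cref{decomp two fibrations}.

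The substance of the proof is therefore the second case, where there is no honest second fibration — only the motivic automorphism $\phi$ of $\mathfrak{h}(X)$ provided by Huybrechts--Riess, commuting with the cup-product and interchanging (up to scalars) the actions of $\eta$ and $\beta$. The plan is to run the argument of \cref{decomp two fibrations} but replace the geometric input ``$g$ is an Ng\^o fibration'' by a transport-of-structure along $\phi$. Concretely: by \cref{decomp eta} (applied to $f$, using assumptions (3) and (4)), the good primitive decomposition of $H^*(X)$ with respect to $(f,\eta)$ from \cref{thm SY}\eqref{good SY} is motivic, realized by a system of algebraic projectors, together with algebraic isomorphisms between dual factors induced by powers of $\eta$ and the relative correspondences $\gamma_n(f)$. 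Conjugating this entire package of projectors and isomorphisms by the motivic automorphism $\phi$ produces algebraic projectors and isomorphisms which realize the \emph{$\beta$-side} good decomposition: since $\phi$ intertwines $\cup\eta$ with $\cup\beta$ (up to a nonzero scalar, which does not affect which subspaces are kernels of the relevant cup-power operators) and respects the realization functor, the conjugated projectors cut out exactly the perverse-type pieces for the $\beta$-Lefschetz operator. At the level of cohomology this is the good decomposition attached to $\beta$ in the sense of \cref{thm SY}\eqref{good SY} with the roles of $\eta$ and $\beta$ swapped.

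Having both the $\eta$-good decomposition and the $\beta$-good decomposition realized by algebraic correspondences on $X$, one superimposes them exactly as in the proof of \cref{decomp two fibrations}: the common refinement is the Shen--Yin bigraded decomposition of \cref{thm SY}\eqref{main SY}, which by \cref{cor unicity} depends only on $\eta$ and $\beta$ (not on the choice of fibrations), so there is no ambiguity in identifying the conjugated decomposition with the $\beta$-part of \cref{thm SY}. The bigraded pieces $V^{i,j}$ are then cut out by algebraic projectors, and since each factor of each of the two good decompositions is self-dual (\cref{motivic good decomposition}), so is each piece of the refinement. Because the Shen--Yin decomposition refines the K\"unneth decomposition, this shows the K\"unneth projectors of $X$ are algebraic and each K\"unneth component is self-dual, which is one of the standard equivalent formulations of the Lefschetz standard conjecture; the remaining standard conjectures then follow over $\bC$.

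I expect the main obstacle to be the bookkeeping in the second case: verifying that conjugation by the Huybrechts--Riess automorphism $\phi$ genuinely transports the \emph{good} decomposition for $(f,\eta)$ to the \emph{good} decomposition for $\beta$ — that is, that the conjugated projectors split the $\beta$-perverse filtration and that $\cup\beta$ is $2$-graded for them — rather than merely some decomposition. This requires knowing that $\phi$ is compatible with the realization functor in a way that carries the perverse $t$-structure data attached to $\eta$ to that attached to $\beta$; concretely one uses that the $\eta$-good decomposition is characterized intrinsically (Remark \ref{duality good}) by kernels of cup-powers of $\eta$, so applying $\phi$ and using $\phi(\cup\eta)=\lambda\,(\cup\beta)\,\phi$ on cohomology turns these into kernels of cup-powers of $\beta$, yielding the $\beta$-good decomposition by the uniqueness statement. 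Once this compatibility is pinned down, the rest is a formal repetition of \cref{decomp two fibrations}.
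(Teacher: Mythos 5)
Your proposal is correct and follows essentially the same route as the paper: the first case is handled by \cref{decomp two fibrations}, and in the second case the paper likewise conjugates the motivic $(f,\eta)$-decomposition (from \cref{thm SY}\eqref{good SY} and \cref{decomp eta}) by the Huybrechts--Riess automorphism, using that it intertwines $\cup\eta$ and $\cup\beta$ together with the unicity of the $V^{i,j}$ (\cref{cor unicity}) to obtain the $\beta$-side decomposition, and then concludes as in \cref{decomp two fibrations}. The compatibility point you flag at the end is exactly what the paper settles via the relations $\gamma^{-1}\circ\cup\eta\circ\gamma=\cup\beta$ and $\gamma^{-1}V^{i,j}=V^{j,i}$, so there is no gap.
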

 \begin{proof}
 We start with Voisin's idea explained in \cref{trick voisin} and consider the two cases listed there.
 In the first case, one is precisely in the setting of  \cref{decomp two fibrations}.  Hence, let us treat the second case.
 
 Let $\gamma$ be the automorphism of the motive $\mathfrak{h}(X)$ as in \cref{trick voisin}.
 The conjugation by $\gamma$ sends a projector to a projector hence it sends
the decomposition in \cref{thm SY}\eqref{main SY} to a new decomposition.

Moreover, as $\gamma$ commutes with the cup-product and  inverts $\eta$ and $\beta$ (up to scalar) one has the following identities (up to scalar)
\begin{align}\label{eq conj 1}
\gamma^{-1}\circ \cup \eta \circ \gamma = \cup \beta,\hspace{1cm}\gamma^{-1}\circ \cup \beta \circ \gamma = \cup \eta.
\end{align}
Hence the new decomposition obtained after conjugation is
\[ H^*(X)= \bigoplus_{0\leq i,j \leq r} \bQ[\beta]/(\beta^{r-i+1}) \otimes \bQ[\eta]/(\eta^{r-j+1}) \otimes (\gamma^{-1}V^{i,j}).\]
By the unicity of the decomposition (see \cref{cor unicity}) one gets 
\begin{align}\label{eq conj 2}
\gamma^{-1}V^{i,j}= V^{j,i}.
\end{align}

Now, let us consider the decomposition in \cref{thm SY}\eqref{main SY}. We want to show that this decomposition is motivic and that each motive in the decomposition is selfdual. Arguing as in the proof of \cref{decomp two fibrations}
it is enough to consider the two induced decompositions (obtained by forgetting the action of $\beta$ respectively $\eta$) and show that they have the same properties.

The first one is achieved, as in the proof of \cref{decomp two fibrations}, by combining  \cref{thm SY}\eqref{good SY} with \cref{decomp eta}.
The second one is deduced from the first one because  it is obtained from  the first one by conjugating by $\gamma$ (as is shown by the relations \eqref{eq conj 1} and \eqref{eq conj 2}).

We have shown that the Shen--Yin decomposition is motivic. This implies the   Lefschetz standard conjecture as we explained in the end of the proof of \cref{decomp two fibrations}.
 \end{proof}
 
\begin{remark}
The main application of the above theorem is given in Section \ref{S:LSV} and proves the Lefschetz standard conjecture for LSV tenfolds. It should be possible to extend this result to some of their deformations via a specialization argument.
\end{remark}

\section{An application to LSV tenfolds}\label{S:LSV}

In this section we prove \cref{thm LSV intro}, as a concrete application of Theorem \ref{main}. We actually prove an extension  of it,  namely Theorem \ref{thm LSV} below.

The section is divided in three subsections. The first subsection recalls the geometry of a general LSV tenfold and identifies the abelian group scheme acting on it;
the second subsection proves the polarizability of this  group scheme; the third subsection gives the proof of the Lefschetz standard conjecture for all LSV tenfolds (Theorem \ref{thm LSV})  and their twisted analogues. 

Let us recall what we mean by \emph{LSV tenfolds}. Let $M$ be a smooth cubic fourfold and let $Y \rightarrow B$ be the universal family  of cubic threefolds over $B:= (\mathbb{P}^5)^{\vee}$ whose fibers are the hyperplane sections of $M$. Let $U \subset B$ be the locus parametrizing smooth hyperplane sections and let $J_U \rightarrow U$ be the intermediate Jacobian fibration. By Donagi-Markman the total space of this fibration has a holomorphic symplectic form.
By \cite{LSV17, Sacca}, there exists a projective HK manifold $X$ of OG$10$-type which compactifies $J_U$, and which is equipped with a Lagrangian fibration  $X \to B$ extending the intermediate Jacobian fibration. In \cite{LSV17}, this is done for a  \emph{general}  cubic fourfold $M$ using an explicit construction that will be recalled below. In \cite{Sacca} this is extended to any cubic fourfold $M$ (smooth or mildly singular). These Lagrangian fibered HK manifolds compactifying $J_U \to U$ will be referred to as LSV tenfolds. If $M$ is general, we will refer to the corresponding HK as a general LSV tenfold. Note that the HK manifolds constructed in \cite{Sacca} are specializations of general LSV tenfolds. While for general $M$ a Lagrangian fibered HK compactification of $J_U$ is unique, for non-general
  $M$ there may be several non isomorphic compactifications. Note, however, that they are all birational.
In \cite{Voisin-twisted},  Voisin constructs a Lagrangian fibered HK compactification of the so-called \emph{twisted} fibration $J^t_U \to U$ associated to a general $M$. This is a non-trivial torsor over $J_U$, and the fibers parametrize one-cycles of degree one on the hyperplane sections, modulo rational equivalence. In \cite{Sacca} this is extended to arbitary $M$.

We show that the criterion established in Theorem \ref{main} applies to general LSV tenfolds, as well as to their twisted analogues. From this, Theorem \ref{thm LSV} follows by a specialization argument.


\subsection{LSV tenfolds and their Ng\^o fibration} Let $M$ be a general cubic fourfold. Let us briefly recall how the LSV tenfolds associated to $M$ and their associated Ng\^o fibration are constructed. We keep the notation introduced above and let $f: X \to B$ denote the Lagrangian fibration on the HK compactification of $J_U$ constructed in \cite{LSV17}.

For each $b \in B$, one considers the family of lines contained in the fiber $Y_b$ of $Y \rightarrow B$, thus obtaining a relative Fano surface $\mathcal{F} \rightarrow B$. In \cite{LSV17}, an open subscheme $\mathcal{F}^0 \rightarrow B$ of $\mathcal{F}$, called subscheme of \emph{very good lines}, is defined, via \emph{loc. cit.}, Definition 2.9. We obtain the smooth commutative group scheme $g:P \rightarrow B$ acting on $X \rightarrow B$ by descent to $B$ of a relative Prym variety over $\mathcal{F}^0$, i.e. the relative Prym attached to a certain \'etale double cover $\tilde{\mathcal{C}} \rightarrow \mathcal{C}$ of relative curves over $\mathcal{F}^0$. Let us describe this more precisely. 

If one fixes a very good line $\ell \in \mathcal{F}^0$ over $b \in B$, projection from $\ell$ realizes the cubic threefold $\mathcal{Y}_b$ as a conic bundle over $\mathbb{P}^2$ with discriminant a quintic curve $\mathcal{C}_{\ell}$: the latter is precisely the fiber of $\mathcal{C}$ over $\ell$. The double cover $\tilde{\mathcal{C}}_{\ell} \rightarrow \mathcal{C}_{\ell}$ is defined as the curve parametrizing the lines in $\mathcal{Y}_b$ incident to $\ell$. By definition, if the line is very good then for each $\ell$, both $\tilde{\mathcal{C}}_{\ell}$ and $\mathcal{C}_{\ell}$ are reduced and irreducible. 

We have thus an étale double cover 
\begin{equation} \label{double cover}
p:\tilde{\mathcal{C}} \rightarrow \mathcal{C}
\end{equation}
of relative curves over $\mathcal{F}^0$ at our disposal.

\begin{definition} \label{jacobian}
We will denote by $\tilde{\mathcal{J}}$, $\mathcal{J}$ the identity components of the relative Picard schemes of $\tilde{\mathcal{C}}$ and $\mathcal{C}$ over $\mathcal{F}^0$. 
\end{definition}
\begin{remark}
Since $p$ is étale and the coherent sheaf $p_* \mathcal{O}_{\tilde{\mathcal{C}}}$ is locally free, we can define a \emph{relative norm map} (\cite[6.5]{EGAII})
\begin{equation} \label{norm map}
 \mbox{Nm}_p : \tilde{\mathcal{J}} \rightarrow \mathcal{J}
\end{equation} 
which verifies in particular the following property (\cite[Prop. 6.5.8]{EGAII}). For each closed point $s$ in $\mathcal{F}^0$, consider the commutative diagram 
 \begin{equation} \label{normalizations}
\begin{tikzcd}
\tilde{\mathcal{C}}_s  \arrow[r, "p_s"]& \mathcal{C}_s \\
\tilde{D}_s \arrow[u, "\tilde{n}"] \ar[r,"\epsilon"] & D_s \ar[u,"n"]
\end{tikzcd}
 \end{equation}
where $n$, $\tilde{n}$ are normalization morphisms and $\epsilon$ is again an étale double cover. Observe that the diagram is in fact cartesian, because normalization commutes with smooth base change. Then the associated norm morphisms $\mbox{Nm}_{p_s}:\tilde{\mathcal{J}}_{s} \rightarrow \mathcal{J}_{s}$ and $\mbox{Nm}_{\epsilon}:J_{\tilde{D}_s} \rightarrow J_{D_s}$ are compatible, in the sense that 
\begin{equation} \label{norm_comp}
n^* \circ \mbox{Nm}_{p_s} = \mbox{Nm}_{\epsilon} \circ \tilde{n}^*\ .
\end{equation}
\end{remark}
We can now give the following: 
\begin{definition} \label{Prym}
The relative Prym $\mathcal{P} \rightarrow \mathcal{F}^0$ associated to the \'etale double cover $p$ \eqref{double cover} is the identity component of the kernel $\ker(\mbox{Nm}_{p})$ of the norm map \eqref{norm map}.
\end{definition}
The proof of \cite[Lemma 4.9]{LSV17} shows that the above relative Prym can be equivalently defined as the identity component of the fixed locus of the involution $\tau$ on $\tilde{\mathcal{J}}$ defined as
  \[ \tau:= -\iota^\ast\colon\ \ \tilde{\mathcal{J}}\ \to\ \tilde{\mathcal{J}}\ ,\] 
  where $\iota$ is the involution of $\tilde{\mathcal{C}}$ defined by the double cover $ \tilde{\mathcal{C}}\to {\mathcal{C}}$. 

\begin{lemma} \label{descent}
The group $\mathcal{F}^0$-scheme $\mathcal{P} \rightarrow \mathcal{F}^0$ descends along the projection $\mathcal{F}^0 \rightarrow B$ to a smooth, commutative group $B$-scheme $g: P \rightarrow B$ with an action $P \times_B X \rightarrow X$ over $B$.   Moreover, $P$ is isomorphic over $B$ to the smooth locus $X^{\mbox{\tiny{sm}}}$ of $X \rightarrow B$.
\end{lemma}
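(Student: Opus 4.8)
The plan is to prove Lemma \ref{descent} in two stages: first establish that the relative Prym $\mathcal{P}\to\mathcal{F}^0$ descends along $\mathcal{F}^0\to B$ to a group $B$-scheme $g:P\to B$ carrying an action on $X$; then identify $P$ with the smooth locus $X^{\mathrm{sm}}$ of $f$. For the descent, recall that $\mathcal{F}^0\to B$ is a (smooth surjective, hence) fppf cover and that the fibers $\mathcal{F}^0_b$ are the very good lines in the cubic threefold $Y_b$. The key point is to produce a canonical descent datum: for two very good lines $\ell, \ell'$ over the same $b$, projection from each line exhibits $Y_b$ as a conic bundle, and there is a canonical isomorphism between the associated Pryms $\mathcal{P}_\ell$ and $\mathcal{P}_{\ell'}$. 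This is exactly the content of the corresponding construction in \cite{LSV17} (the intermediate Jacobian of the cubic threefold $Y_b$ is canonically identified, via each conic bundle structure, with the Prym of the discriminant double cover, by Beauville/Mumford's theory of Prym varieties of conic bundles). One packages these canonical isomorphisms over $\mathcal{F}^0\times_B\mathcal{F}^0$ into a descent datum, checks the cocycle condition on $\mathcal{F}^0\times_B\mathcal{F}^0\times_B\mathcal{F}^0$ (again this compatibility comes from the geometry, i.e. all three identifications are with the intrinsic intermediate Jacobian $J^2(Y_b)$), and invokes fppf descent for the (quasi-projective, since a component of a relative Picard scheme) scheme $\mathcal{P}$, its group structure, and its action on $X$ (which itself is built by an analogous descent in \cite{LSV17}). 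The smoothness and commutativity of $P$ are preserved since these are fppf-local properties.

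For the second stage — identifying $P$ with $X^{\mathrm{sm}}$ — the strategy is to work fiberwise and use that over the open locus $U\subset B$ of smooth hyperplane sections, both $P|_U$ and $X^{\mathrm{sm}}|_U = J_U$ agree with the intermediate Jacobian fibration (the action of $P$ on $X$ restricts over $U$ to the torsor action of $J_U$ on itself, which is simply translation, giving an isomorphism $P|_U\cong J_U = X^{\mathrm{sm}}|_U$ after choosing the zero section). One then extends this isomorphism over all of $B$. The cleanest way is to use the action map: fixing the distinguished section of $f$ (the closure of the zero section of $J_U\to U$, which lands in $X^{\mathrm{sm}}$), the orbit map $a(-, e):P\to X$ factors through $X^{\mathrm{sm}}$ (since $P$ is smooth over $B$ and $P^o=P$ here) and is an isomorphism over $U$. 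One checks it is an isomorphism over all of $B$: both source and target are smooth over $B$ of the same relative dimension, the map is an isomorphism on an open dense subset, and — this is where one needs input from \cite{LSV17, Sacca} — the action of $P$ on $X^{\mathrm{sm}}$ is \emph{simply transitive} fiberwise, i.e. every fiber $X^{\mathrm{sm}}_b$ is a torsor under $P_b$. Granting the simple transitivity, the orbit map is a bijective morphism between smooth $B$-schemes, and since in characteristic zero a bijective morphism of smooth varieties of the same dimension is an isomorphism (Zariski's main theorem plus normality), we conclude $P\cong X^{\mathrm{sm}}$ over $B$.

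The main obstacle I expect is the fiberwise simple transitivity of the $P$-action on $X^{\mathrm{sm}}$ over the \emph{singular} fibers — i.e. over $b\in B\setminus U$ where the hyperplane section $Y_b$ is a singular (but for general $M$, mildly singular) cubic threefold. Over such $b$, $X_b$ is a singular compactified Prym and $P_b$ is a (possibly non-compact) semi-abelian-type group; one must verify that $X^{\mathrm{sm}}_b$ is exactly a $P_b$-torsor and not merely a union of several orbits, and that the Chevalley dévissage of $P_b$ has the expected affine part. This requires a careful analysis of the local structure of the compactified intermediate Jacobian near the singular locus, drawing on the explicit descriptions in \cite{LSV17} (for general $M$) and \cite{Sacca}. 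The descent-datum cocycle verification in the first stage, by contrast, should be essentially formal once one has cited the canonical identification of the cubic-threefold intermediate Jacobian with the conic-bundle Prym, since that identification is manifestly intrinsic (independent of the chosen line).
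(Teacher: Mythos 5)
Your proposal has two genuine gaps, and they sit exactly where the mathematical content of the lemma lies. First, in your second stage everything reduces to the claim that $X^{\mathrm{sm}}_b$ is a torsor under $P_b$ for \emph{every} $b\in B$, including points of the discriminant locus where $Y_b$ is singular and $X_b$ is a compactified Prym; you explicitly defer this, but without it the orbit-map argument only reproduces the isomorphism over $U$, which is already known, so the deferred point is not a loose end but the statement itself. Second, your descent datum in the first stage is justified by identifying each Prym $\mathcal{P}_\ell$ with ``the intrinsic intermediate Jacobian $J^2(Y_b)$''; this identification only exists for $b\in U$, i.e.\ for smooth hyperplane sections. Over singular $Y_b$ there is no classical intermediate Jacobian to serve as the line-independent pivot, and proving that the Prym attached to a very good line is canonically independent of the line over the discriminant is precisely the nontrivial descent work carried out in \cite{LSV17}; so the cocycle verification is not ``essentially formal'' — either you are silently re-proving a substantial part of \cite{LSV17}, or you are citing their construction, in which case stage one is a reference rather than a proof.

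The paper's proof runs in the opposite direction and disposes of both points at once. It first observes that the fibers of $f\colon X\to B$ are integral (being closures of connected Prym varieties, by \cite[Proposition 4.10(1), proof of Corollary 4.17]{LSV17}), so that $f$ is an integrable system in the sense of \cite[Definition 2.6]{AF16}; then \cite[Theorem 2]{AF16} produces in one stroke a smooth commutative group $B$-scheme $P$, an action $P\times_B X\to X$, and the fact that $X^{\mathrm{sm}}$ is a $P$-torsor — exactly the simple transitivity you were missing — and the zero section of $X^{\mathrm{sm}}\to B$ from \cite[Corollary 4.8]{Sacca} upgrades the torsor to an isomorphism $P\simeq X^{\mathrm{sm}}$. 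The relation to the relative Prym is then immediate: by the construction in \cite[Section 5]{LSV17} one has a cartesian square identifying $X\times_B\mathcal{F}^0$ with a compactification $X'$ of $\mathcal{P}$ over $\mathcal{F}^0$ in which $\mathcal{P}$ is the smooth locus of $X'\to\mathcal{F}^0$, and since formation of the smooth locus commutes with the smooth base change $\mathcal{F}^0\to B$, the pullback of $P\simeq X^{\mathrm{sm}}$ is $\mathcal{P}$; no descent datum or cocycle check is needed. If you want to salvage your outline, replace the projected ``careful analysis of the local structure of the compactified intermediate Jacobian'' by the citation of \cite[Theorem 2]{AF16}, whose only geometric input is the integrality of the fibers.
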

\begin{proof}
The fibers of the morphism $f:X \rightarrow B$ are integral, since   each such fiber can be identified with the closure in $X$ of a (smooth, connected) Prym variety (cfr. \cite[Proposition 4.10 (1), proof of Corollary 4.17]{LSV17}). The morphism $f$ is then part of an \emph{integrable system} in the sense of \cite[Definition 2.6]{AF16}, and by applying \cite[Theorem 2]{AF16}, we obtain a smooth, commutative $B$-group scheme $P \rightarrow B$ with an action $P \times_B X \rightarrow X$ over $B$, such that the smooth locus $X^{\mbox{\tiny{sm}}}$ of $f$ in $X$ is a torsor over $P$. Since by \cite[Corollary 4.8]{Sacca} $X^{\mbox{\tiny{sm}}} \to B$ has a zero section, it follows that $P \simeq X^{\mbox{\tiny{sm}}}$.
Now remember that by construction (cfr. \cite[Section 5]{LSV17}), $f:X \rightarrow B$ is part of a cartesian square
\[
\begin{tikzcd} 
X^\prime \arrow[r, "f^\prime"] \arrow[d] & \mathcal{F}^0 \arrow[d] \\
X \arrow[r, "f"] & B
\end{tikzcd}
\]
where $X^\prime$ is a compactification of $\mathcal{P}$ over $\mathcal{F}^0$, and more precisely, $\mathcal{P}$ coincides with the smooth locus of $f^\prime$ in $X^\prime$. This tells us that we have a cartesian diagram 
\[
\begin{tikzcd} 
\mathcal{P} \arrow[r] \arrow[d] & \mathcal{F}^0 \arrow[d] \\
X^{\mbox{\tiny{sm}}} \arrow[r] & B
\end{tikzcd}
\]
 hence concluding the proof.
\end{proof}


Since we want to apply our Theorem \ref{main} to an LSV tenfold $X$, it becomes our task to show: 

\begin{proposition} \label{LSV_Ngo} The data $(X, B, P)$ associated as above to a general LSV tenfold $X$ define an Ng\^o fibration (i.e. a $\delta$-regular weak abelian fibration, see \cref{WAF}) with integral fibers.
\end{proposition}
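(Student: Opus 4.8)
The plan is to verify the six axioms of a weak abelian fibration from \cref{WAF} together with the $\delta$-regularity inequality $\codim B_i\geq i$ of \cref{def ngo}, for the data $(X,B,P)$ already constructed in \cref{descent}. Several of these are immediate from the geometry recalled above: properness of $f:X\to B$ is clear since $X$ is projective; $g:P\to B$ is a smooth commutative group scheme and acts on $X$ over $B$ by \cref{descent}; the relative dimensions of $f$ and $g$ agree (both equal $r=\dim B=5$) because $P\simeq X^{\mathrm{sm}}$ over $B$; and the integrality of the fibers of $f$ was already noted in the proof of \cref{descent}. It remains to check (i) affineness of stabilizers, (ii) polarizability of the Tate module $T_\ell(P)$, and (iii) the $\delta$-regularity codimension estimate.

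For the affine stabilizers, since $X^{\mathrm{sm}}$ is a torsor under $P$, the stabilizer of any point in $X^{\mathrm{sm}}$ is trivial, so the only issue is at points of the singular locus $X\smallsetminus X^{\mathrm{sm}}$. Here I would argue fiberwise: the fiber $P_b$ is, by the Chevalley dévissage, an extension of an abelian variety $A_b$ (a Prym) by an affine connected group $L_b$; the action of $P_b$ on the (integral) fiber $X_b$ factors through a group whose stabilizers, on the boundary, are built out of the affine part, hence affine. Concretely one uses the identification of $X_b$ with the compactified Prym and tracks the stabilizers through the conic-bundle description; this is the kind of computation done in the Beauville--Mukai and Ng\^o settings, and the reference \cite[proof of Theorem 3.3.1]{DeC17} (cited already in the corollary above) provides the template. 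The polarizability in (ii) is the content of the \textbf{second subsection} promised in the section introduction: one constructs the pairing on $T_\ell(P)$ from the principal polarization on the relative Jacobian $\tilde{\mathcal J}$ (the theta divisor / Weil pairing), restricts it to the Prym, descends it along $\mathcal F^0\to B$, and checks that its radical at each $b$ is exactly $T_\ell(L_b)$; here one imitates Ng\^o's argument for compactified Jacobians \cite[page 76]{Ngo10}, the extra input being the compatibility of norm maps with normalization recorded in \eqref{norm_comp}, which controls what happens over the various $\delta$-loci.

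The main obstacle will be (iii), the $\delta$-regularity estimate $\codim B_i\geq i$. This requires understanding, for each $i$, the locus $B_i\subset B=(\bP^5)^\vee$ of hyperplane sections whose associated Prym group scheme has affine part of dimension exactly $i$, and showing it has codimension at least $i$. The affine part $L_b$ measures the "degeneration" of the intermediate Jacobian of the cubic threefold $Y_b$, which is governed by the singularities of $Y_b$ (a cubic threefold acquires a larger toric/additive part as its singularities worsen). So the strategy is to stratify $B$ by the singularity type of the cubic threefold $Y_b$ — roughly, by the number and type of nodes and worse singularities — relate the dimension of $L_b$ to this stratification (using the Prym construction and the conic-bundle discriminant curve $\mathcal C_\ell$, whose own singularities mirror those of $Y_b$), and then carry out a dimension count on the space of singular hyperplane sections of the fixed smooth cubic fourfold $M$. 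A cubic fourfold is smooth, so by Bertini a generic hyperplane section is smooth and the codimension-one stratum consists of one-nodal sections, costing codimension $1$ while contributing $i=1$; imposing $k$ nodes in general position costs codimension $k$, matching $i=k$, and worse singularities (which contribute more to $\dim L_b$) are correspondingly deeper. Making this precise — in particular handling non-nodal degenerations and confirming that the naive count is not violated for special cubic fourfolds versus only general ones — is the delicate part, and is presumably where the hypothesis that $M$ is \emph{general} is used; the passage to arbitrary (smooth) cubic fourfolds, and to the twisted fibration, is then deferred to the specialization argument of \cref{extension} and the \textbf{third subsection}.
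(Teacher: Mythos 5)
The main gap is in step (iii), $\delta$-regularity, which you yourself flag as ``the main obstacle'' and then leave as a sketched strategy rather than a proof. Your proposed route --- stratify $B=(\bP^5)^\vee$ by the singularity type of the hyperplane section, relate $\dim L_b$ to the singularities via the discriminant quintic, and do a dimension count --- is not carried out, and making it rigorous would require controlling all degenerations (non-nodal and non-isolated singularities, non-reduced or reducible discriminant curves, the behaviour of the Prym under each), which is a substantial project in itself. The paper avoids this entirely: since $P$ was obtained in the proof of \cref{descent} by applying \cite[Theorem 2]{AF16}, the triple is automatically part of a \emph{degenerate abelian scheme} in the sense of \cite[Definition 2.1]{AF16}, and condition (iv) of that definition already gives the inequality $\codim B_i \geq i$; alternatively, $\delta$-regularity holds for \emph{any} such Lagrangian fibration as a consequence of the symplectic form on $X$, as explained in \cite[Section 2]{Ngo2}. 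In particular, no genericity of the cubic fourfold and no explicit stratification of the discriminant are needed for this point, and as written your argument does not establish the proposition at its central step.

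The affine-stabilizer step (i) is also only heuristic in your write-up: the assertion that the stabilizers on the boundary are ``built out of the affine part, hence affine'' is exactly what has to be proved, and the conic-bundle/compactified-Prym computation you allude to is not supplied. The paper again extracts this from the Arinkin--Fedorov framework: by \cite[Definition 2.1(vi)]{AF16} every fiber $X_b$ contains a point whose stabilizer is finite, and \cite[Lemma 5.16]{AF16} then implies that the stabilizer of every point of $X_b$ is affine. Your handling of the remaining items is fine and matches the paper: the first four axioms of \cref{WAF} and the integrality of the fibers are immediate from the construction and \cref{descent}, and deferring polarizability to a Weil-pairing argument on the relative Jacobian, restricted to the Prym and descended along $\mathcal{F}^0\to B$, is precisely what the paper does in \cref{polarizability} and \cref{restr_pol}.
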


\begin{proof}
By construction, our data $(X, B, P)$ satisfy the first four conditions in Definition \ref{WAF}. Now, since $P$ has been constructed in the proof of Lemma \ref{descent} by applying \cite[Theorem 2]{AF16}, we have that $P \rightarrow B$ is automatically part of the data defining a \emph{degenerate abelian scheme} in the sense of  Definition 2.1 of \emph{loc. cit.}. In particular, by  Definition 2.1(vi) of \emph{loc. cit.}, for every schematic point $b \in B$, there is a point $x \in X_b$ such that the stabilizer of $x$ in $P \times_B x$ is finite; but then, by \emph{loc. cit.}, Lemma 5.16, the stabilizer of every other point in the fiber over $b$ is affine, so that condition (5) in Definition \ref{WAF} is satisfied. Moreover, Definition 2.1(iv) of \emph{loc. cit.} implies that the $\delta$-regularity condition required by Definition \ref{def ngo} is satisfied, too (alternatively, the $\delta$-regularity also follows from the existence of a symplectic form on $X$, as explained in \cite[Section 2]{Ngo2}). It remains to check that condition (6) in Definition \ref{WAF} holds. This is proven in the following subsection (Lemma \ref{polarizability}).
\end{proof}

 \begin{remark} \label{Ngo twisted}
Given any general LSV tenfold, the same group scheme $P$ as in the above proposition acts, by construction, on its twisted version, in such a way that the first four conditions in Definition \ref{WAF} are satisfied. Then, the above proof applies to show that the data $(X, B, P)$ define an Ng\^o fibration with integral fibers even when $X$ is a twisted general LSV tenfold. 
\end{remark}
 \subsection{Polarizability of the descended relative Prym}

This subsection is devoted to providing the last ingredient needed in the proof of  \cref{LSV_Ngo}, namely:
\begin{lemma} \label{polarizability}
Let $P \rightarrow B$ be the commutative group scheme associated to a general LSV tenfold $X$ as in the previous subsection. Then the Tate module $T_{\ell}(P)$ is polarizable. 
\end{lemma}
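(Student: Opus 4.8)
The plan is to deduce polarizability of $T_\ell(P)$ from Ng\^o's polarizability theorem for relative Jacobians of families of integral curves, applied to the \'etale double cover \eqref{double cover}, and then to cut out the Prym part by means of the covering involution $\iota$. As a first step I would reduce the statement from $P/B$ to the relative Prym $\mathcal{P}/\mathcal{F}^0$: the cartesian squares in the proof of \cref{descent} identify $\mathcal{P}$ with $P \times_B \mathcal{F}^0$, the map $\mathcal{F}^0 \to B$ is faithfully flat and surjective, and the Tate module, the $\delta$-loci and the Chevalley d\'evissage are all compatible with base change; moreover a pairing on $T_\ell(\mathcal{P})$ which is \emph{canonically} defined is automatically compatible with the descent datum of $\mathcal{P}$ relative to $\mathcal{F}^0 \to B$, hence descends to a pairing on $T_\ell(P)$, and the null-space condition for $P$ over a point $b$ can be checked over any point of $\mathcal{F}^0$ above $b$. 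So it is enough to polarize $T_\ell(\mathcal{P})$.

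Since the very-good-line condition guarantees that $\tilde{\mathcal{C}} \to \mathcal{F}^0$ is a family of integral curves, Ng\^o's construction (\cite[p.~76]{Ngo10}; see also \cite[proof of Theorem~3.3.1]{DeC17}) produces a pairing $\langle\,,\,\rangle$ on $T_\ell(\tilde{\mathcal{J}})$ whose null-space over each point $s \in \mathcal{F}^0$ is the Tate module $T_\ell(L^{\tilde{\mathcal{J}}}_s)$ of the affine part of $\tilde{\mathcal{J}}_s$. Concretely this is the Weil pairing coming from the cup product on the $H^1$ of the fibre curves, so it is functorial and in particular $\iota^\ast$-invariant: $\langle \iota^\ast x, \iota^\ast y\rangle = \langle x,y\rangle$. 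Over $\overline{\mathbb{Q}}_\ell$ the involution $\iota^\ast$ splits $T_\ell(\tilde{\mathcal{J}})$ as $T^+ \oplus T^-$, with $T^-$ equal to $T_\ell(\mathcal{P})$ (recall that $\mathcal{P}$ is the identity component of the $\iota^\ast$-anti-invariant locus of $\tilde{\mathcal{J}}$, cf.\ \cref{Prym}) and $T^+$ the pull-back $T_\ell(\mathcal{J})$; by $\iota^\ast$-invariance of the pairing, $T^+$ and $T^-$ are orthogonal. I would then take as polarization of $T_\ell(\mathcal{P})$ the restriction of $\langle\,,\,\rangle$ to $T^-$; there is no need to divide by two here, since only the null-space, and not principality, is required.

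The remaining point — and the one I expect to be the only genuine difficulty — is to verify the null-space condition for this restricted pairing. Because the pairing is $\iota^\ast$-equivariant and $T^\pm$ are orthogonal, its radical over $s$ equals $(T_\ell(L^{\tilde{\mathcal{J}}}_s))^{\iota^\ast = -1}$, and one must identify this with $T_\ell(L^{\mathcal{P}}_s)$. This amounts to comparing the two Chevalley d\'evissages: $L^{\tilde{\mathcal{J}}}_s$ is characteristic in $\tilde{\mathcal{J}}^o_s$, hence $\iota^\ast$-stable, and after tensoring Tate modules with $\overline{\mathbb{Q}}_\ell$ — where finite isogenies become isomorphisms and passing to identity components is harmless — the abelian part of $\mathcal{P}_s$ is the anti-invariant part of the abelian part of $\tilde{\mathcal{J}}_s$ by Poincar\'e reducibility, the toric part of $\mathcal{P}_s$ is the anti-invariant part of the toric part of $\tilde{\mathcal{J}}_s$ at the level of cocharacter lattices, and the unipotent summands of the affine parts contribute nothing to the Tate modules. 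Carefully carrying out this bookkeeping — tracking connected components, finite isogenies and the unipotent parts of the affine groups — is the technical heart of the argument; everything else follows formally from Ng\^o's theorem together with the classical polarization theory of Prym varieties.
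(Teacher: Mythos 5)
Your core construction---Ng\^o's Weil pairing on the Tate module of the relative Jacobian, restricted to the Prym part---is the same as the paper's (\cref{pol_jac} and \cref{restr_pol}), and your way of cutting out the Prym differs only in presentation: you use the $\iota^*$-invariance of the pairing and the eigenspace decomposition $T^+\oplus T^-$, whereas the paper works with the norm maps $\mathrm{Nm}_{p_s}$, $\mathrm{Nm}_\epsilon$, their adjointness with respect to the Weil forms, and a diagram chase on the Chevalley d\'evissages. The paper itself notes, in the remark following \cref{restr_pol}, that your involution argument is a valid and more direct alternative, using divisibility of abelian varieties to see that the d\'evissage of $\tilde{\mathcal{J}}_s$ induces that of $\mathcal{P}_s=\mathrm{Fix}(-\iota^*)^\circ$. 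Your observation that the radical of the restricted pairing is exactly $\bigl(T_\ell(L^{\tilde{\mathcal{J}}}_s)\bigr)^{\iota^*=-1}$, by orthogonality of the eigenspaces, is correct, and the bookkeeping you sketch (unipotent parts have trivial Tate module, finite isogenies are invisible after tensoring with $\overline{\mathbb{Q}}_\ell$) does close that part of the argument.

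The genuine gap is your first step, the descent from $\mathcal{F}^0$ to $B$. You assert that the pairing on $T_\ell(\mathcal{P})$, being ``canonically defined'', is automatically compatible with the descent datum along the faithfully flat map $\mathcal{F}^0\to B$. But the pairing is built out of the relative double cover \eqref{double cover}, i.e.\ out of the choice of a very good line, and this datum does \emph{not} descend to $B$: two very good lines $\ell,\ell'$ over the same $b$ give two different curves and two a priori different Weil pairings on the same group $P_b\simeq\mathcal{P}_\ell\simeq\mathcal{P}_{\ell'}$, so compatibility with the descent datum (agreement of the two pullbacks on $\mathcal{F}^0\times_B\mathcal{F}^0$) is precisely what has to be proved, and ``canonicity'' does not provide it. The paper avoids checking any cocycle condition by a different mechanism: $\pi\colon\mathcal{F}^0\to B$ is smooth, surjective and has \emph{connected} fibers, so $\pi^*$ is fully faithful on constructible sheaves (\cref{ff_pullback}); since $T_\ell(\mathcal{P})\simeq\pi^*T_\ell(P)$ by base change, any pairing upstairs is the pullback of a unique pairing downstairs, and the null-space condition is then transported through an explicit stalk comparison at a point $s$ above $b$. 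Faithful flatness alone, which is all you invoke, is not enough; to repair your argument, either supply the full-faithfulness step (the key input being the connectedness of the fibers of $\mathcal{F}^0\to B$, \cite[Lemma 2.5 (ii)]{LSV17}) or prove directly that the pairing is independent of the chosen very good line.
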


In order to give the proof, we will start by constructing a polarization on the Tate module of the relative Prym $\mathcal{P}$ of   \cref{Prym}. This will be done in the next couple of lemmas.

\begin{lemma} \label{pol_jac}
Let $T_{\ell}$ be the Tate module of the relative Jacobian $\tilde{\mathcal{J}}$ associated  with the relative curve $\tilde{\mathcal{C}} \rightarrow \mathcal{F}^0$, see   \cref{jacobian}. Then $T_{\ell}$ is polarizable. 
\end{lemma}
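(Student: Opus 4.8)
The plan is to produce the polarization as the cup-product pairing on the cohomology of the family of curves $\pi\colon\tilde{\mathcal{C}}\to\mathcal{F}^0$ and to check fiberwise that its null-space is the affine part of the Chevalley dévissage. This is in essence Ng\^o's argument for Picard schemes of families of reduced curves \cite[\S 7.4, p.~76]{Ngo10} (see also \cite[proof of Theorem 3.3.1]{DeC17}); the input particular to our situation is that the very-goodness of the lines in $\mathcal{F}^0$ guarantees that every geometric fiber $\tilde{\mathcal{C}}_{s}$ is reduced and irreducible, so that his construction applies.

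First I would set up the pairing. Write $g\colon\tilde{\mathcal{J}}\to\mathcal{F}^0$ and $d:=\dim_{\mathcal{F}^0}\tilde{\mathcal{J}}$. The Kummer sequence on $\tilde{\mathcal{C}}$ (using that the fibers are reduced and connected, so $R^{0}\pi_{*}\mathbb{G}_{m}=\mathbb{G}_{m}$, and that $R^{2}\pi_{*}\mathbb{G}_{m}=0$ since the geometric fibers are curves over algebraically closed fields) identifies $R^{1}\pi_{*}\overline{\mathbb{Q}}_{\ell}(1)$ with the rational Tate module of the relative Picard scheme, i.e.\ with $T_{\ell}(\tilde{\mathcal{J}})=R^{2d-1}g_{!}\overline{\mathbb{Q}}_{\ell,\tilde{\mathcal{J}}}(d)$ (cf.\ \cite[\S 7.4]{Ngo10}). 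Since the geometric fibers of $\pi$ are irreducible of dimension one, the trace map gives a canonical isomorphism $R^{2}\pi_{*}\overline{\mathbb{Q}}_{\ell}\cong\overline{\mathbb{Q}}_{\ell,\mathcal{F}^0}(-1)$. Composing the cup-product $R^{1}\pi_{*}\overline{\mathbb{Q}}_{\ell}\otimes R^{1}\pi_{*}\overline{\mathbb{Q}}_{\ell}\to R^{2}\pi_{*}\overline{\mathbb{Q}}_{\ell}$ with this isomorphism and inserting Tate twists yields the proposed pairing
\[
T_{\ell}(\tilde{\mathcal{J}})\times T_{\ell}(\tilde{\mathcal{J}})\ \longrightarrow\ \overline{\mathbb{Q}}_{\ell,\mathcal{F}^0}(1)\ .
\]

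Next I would identify its null-space at a point $s\in\mathcal{F}^0$. Let $\tilde{n}\colon\tilde{D}_{s}\to\tilde{\mathcal{C}}_{s}$ be the normalization, as in \eqref{normalizations}, and $\Gamma_{s}$ the dual graph of $\tilde{\mathcal{C}}_{s}$. The weight filtration on $H^{1}(\tilde{\mathcal{C}}_{s},\overline{\mathbb{Q}}_{\ell})$ satisfies $W_{0}=H^{1}(\Gamma_{s},\overline{\mathbb{Q}}_{\ell})$ and $\mathrm{gr}^{W}_{1}=H^{1}(\tilde{D}_{s},\overline{\mathbb{Q}}_{\ell})$. As $H^{2}(\tilde{\mathcal{C}}_{s},\overline{\mathbb{Q}}_{\ell})$ is pure of weight $2$, any cup-product involving a class of weight $\le 0$ vanishes, so $W_{0}$ lies in the radical; the pairing induced on $\mathrm{gr}^{W}_{1}=H^{1}(\tilde{D}_{s},\overline{\mathbb{Q}}_{\ell})$ is, under the trace identification, Poincar\'e duality on the smooth proper curve $\tilde{D}_{s}$, hence perfect. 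Therefore the radical is exactly $W_{0}\subseteq H^{1}(\tilde{\mathcal{C}}_{s},\overline{\mathbb{Q}}_{\ell}(1))$. On the other hand, comparing the Chevalley dévissage $0\to L_{s}\to\tilde{\mathcal{J}}_{s}\to A_{s}\to 0$ with the normalization exact sequence for $\tilde{\mathcal{J}}_{s}=\Pic^{0}(\tilde{\mathcal{C}}_{s})$ identifies $A_{s}$ with $\Jac(\tilde{D}_{s})$, so $V_{\ell}(A_{s})=\mathrm{gr}^{W}_{1}$, and identifies the toric part of $L_{s}$ with a torus whose Tate module is $H^{1}(\Gamma_{s},\overline{\mathbb{Q}}_{\ell}(1))=W_{0}$; the vector part of $L_{s}$ has trivial $\ell$-adic Tate module, since we are in characteristic zero. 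Hence $T_{\ell}(L_{s})=W_{0}$ coincides with the radical computed above, which is what polarizability requires.

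The step I expect to be the main obstacle is this last identification: matching the abstract Chevalley dévissage of the (possibly very singular) generalized Jacobian $\Pic^{0}(\tilde{\mathcal{C}}_{s})$ with the concrete weight decomposition of $H^{1}(\tilde{\mathcal{C}}_{s})$, and in particular making sure that the additive part of $L_{s}$ — which is invisible both to the cup-product radical and to the $\ell$-adic Tate module — does not disturb the equality $T_{\ell}(L_{s})=W_{0}$. The remaining ingredients are either the standard Kummer-sequence comparison or weight-purity bookkeeping, and the only geometric fact proper to LSV tenfolds that enters is the reducedness and irreducibility of the fibers, which is precisely what ``very good line'' ensures.
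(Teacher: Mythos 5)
Your proof is correct, but it takes a somewhat different route than the paper. The paper does not construct the pairing by hand: it invokes Ng\^o's completely general construction of a Weil pairing on the Tate module of the relative Picard scheme of a proper flat curve with geometrically reduced fibers, built via the determinant of the cohomology (first over strictly henselian bases, then spread out), and then verifies the null-space condition by combining Ng\^o's Lemme 4.12.2 (compatibility of the pairing with pullback along normalization), the identification of the abelian quotient of $\mathcal{J}_{\tilde{\mathcal{C}}_s}$ with $J_{\tilde D_s}$ from Bosch--L\"utkebohmert--Raynaud, and Milne's identification of the pairing on the abelian part with cup product on $H^1(\tilde D_s)$. You instead build the pairing directly as the relative cup product on $R^1\pi_*\overline{\mathbb{Q}}_\ell(1)$ (via the Kummer-sequence identification with $T_\ell(\tilde{\mathcal{J}})$, using that the fibers over very good lines are reduced, irreducible and have trivial Brauer group, and the trace isomorphism $R^2\pi_*\overline{\mathbb{Q}}_\ell\cong\overline{\mathbb{Q}}_{\ell}(-1)$), and you check the null-space condition fiberwise by the weight filtration: $W_0=\ker(\tilde n^*)$ lies in the radical, the induced form on $\mathrm{gr}^W_1\cong H^1(\tilde D_s)$ is Poincar\'e duality, and $T_\ell(L_s)$ equals $W_0$ because the additive part has trivial $\ell$-adic Tate module in characteristic zero. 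At the level of fibers the two pairings agree (this is exactly the content of the Milne reference the paper uses), so the mathematics is closely parallel; what your version buys is a self-contained, purely cohomological construction that exploits the irreducibility of the fibers guaranteed by the very-good-line condition, whereas the paper's citation of Ng\^o's determinant-of-cohomology pairing applies verbatim to reduced but possibly reducible fibers and avoids having to set up the relative Kummer identification. Do make sure, as you note yourself, that the identification $T_\ell(\tilde{\mathcal J})=R^{2d-1}g_!\overline{\mathbb{Q}}_{\ell,\tilde{\mathcal J}}(d)\cong R^1\pi_*\overline{\mathbb{Q}}_\ell(1)$ and its compatibility with $\tilde n^*$ are stated in the relative setting (this is in Ng\^o's \S 4.12), since the polarizability condition is a statement about sheaves over all of $\mathcal{F}^0$, not just over a single fiber.
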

\begin{proof}
The statement follows from the completely general construction of \cite[page 76]{Ngo10} (cf. also \cite[proof of Theorem 3.3.1]{DeC17}), which defines a polarization $W_C$  on the Tate module of the relative Picard scheme $J_C$ of a proper, flat relative curve $C$ with geometrically reduced fibers over a base $S$. The polarization is induced by a \emph{Weil pairing} 
\[
\langle \cdot, \cdot \rangle_C : J_C \times J_C \rightarrow \mbox{Pic}^0(S)
\]
constructed via the theory of the determinant of the cohomology. (The construction is first given for strictly henselian bases $S$; by applying it to the strict henselianization of the local ring of any point of our base $B$, we can then obtain a polarization over $B$). The vanishing of $W_C$ on the linear part of $T_{\ell}(J_C)$ comes from the conjunction of the following two facts: 

(i) By \cite[Lemme 4.12.2]{Ngo10}, for any geometric point $s \in S$, denoting by $\nu:D_s \rightarrow C_s$ the normalization morphism, and for any pair $L,L^\prime$ of degree-zero line bundles on $C_s$, we have the identification of $k(s)$-vector spaces
\begin{equation} \label{pairing_ab}
\langle L, L^\prime \rangle_{C_s} = \langle \nu^*L, \nu^*L^\prime \rangle_{D_s}\ .
\end{equation}

(ii) By \cite[§9.2, Corollary 11]{BLR90}, the projection $J_{C_s} \rightarrow J^{ab}_{C_s}$ on the abelian part of $J_{C_s}$, whose kernel is the linear part $J^{aff}_{C_s}$ of $J_{C_s}$, is identified with the morphism $J_{C_s} \rightarrow J_{D_s}$ induced by the normalization. 

The non-degeneracy of the polarization on $T_{\ell}(J^{ab}_{C_s})$ comes from its identification with cup product on the first étale $\ell$-adic cohomology group of $D_s$ (\cite[Chapter V, Remark 2.4(f)]{Mil80}).  
\end{proof}

\begin{lemma} \label{restr_pol}
Let $T^\prime_{\ell}$ be the Tate module of the relative Prym $\mathcal{P}$ of  \cref{Prym} and consider it as a sub-object of the Tate module $T_{\ell}$ of the relative Jacobian $\tilde{\mathcal{J}}$. Then the restriction to $T^\prime_{\ell}$ of the polarization on $T_{\ell}$ provided by Lemma \ref{restr_pol} is again a polarization.
\end{lemma}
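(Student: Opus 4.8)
The plan is to take as pairing on $T'_\ell$ the restriction of the polarization on $T_\ell := T_\ell(\tilde{\mathcal{J}})$ furnished by \cref{pol_jac}, and to check that at each geometric point $s$ of $\mathcal{F}^0$ the null-space of this restricted pairing is the Tate module of the Chevalley linear part of the fiber $\mathcal{P}_s$. Everything takes place over $\mathcal{F}^0$ with $\overline{\bQ}_\ell$-coefficients, so that isogenies of smooth commutative group schemes induce isomorphisms on Tate modules, and $T_\ell(-)$ is exact on short exact sequences of smooth connected commutative group schemes and annihilates finite group schemes (the standard properties used by Ng\^o). First I would record the relative Prym decomposition: the involution $\iota$ of $\tilde{\mathcal{C}}$ over $\mathcal{F}^0$ induces an involution $\iota^*$ of $\tilde{\mathcal{J}}$ and a splitting $T_\ell = T_\ell^+ \oplus T_\ell^-$ into $(\pm 1)$-eigenspaces, with $T_\ell^+ \cong T_\ell(\mathcal{J})$ (via pullback along $p$) and $T_\ell^- = T_\ell(\mathcal{P}) = T'_\ell$, the last equality because $\mathcal{P}$ is the identity component of the fixed locus of $-\iota^*$ on $\tilde{\mathcal{J}}$. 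Since the Weil pairing of \cref{pol_jac} is functorial in automorphisms of the relative curve $\tilde{\mathcal{C}}$, it is $\iota^*$-invariant, so $\langle x, y \rangle = \langle \iota^* x, \iota^* y \rangle = -\langle x, y \rangle = 0$ for $x \in T_\ell^+$, $y \in T_\ell^-$. Thus $T_\ell = T_\ell^+ \oplus T_\ell^-$ is orthogonal, and it follows that the restriction of $\langle\cdot,\cdot\rangle$ to $T'_\ell$ is a well-defined $\overline{\bQ}_{\ell}(1)$-valued pairing whose fiberwise null-space is $N \cap T'_\ell$, where $N \subset T_\ell$ denotes the fiberwise null-space of $\langle\cdot,\cdot\rangle$ itself.

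It then remains to identify $N \cap T'_\ell$ fiberwise. Fix a geometric point $s \in \mathcal{F}^0$. By \cref{pol_jac}, $N_s = T_\ell(J^{aff}_{\tilde{\mathcal{C}}_s})$, where by \cite[\S 9.2, Corollary~11]{BLR90} the linear part $J^{aff}_{\tilde{\mathcal{C}}_s}$ is exactly $\ker(\tilde{n}^* \colon J_{\tilde{\mathcal{C}}_s} \to J_{\tilde{D}_s})$ ($\tilde{\mathcal{C}}_s$ being integral, there is a single normalization $\tilde{n} \colon \tilde{D}_s \to \tilde{\mathcal{C}}_s$). The norm compatibility \eqref{norm_comp} shows that $\tilde{n}^*$ sends $\mathcal{P}_s = (\ker \mathrm{Nm}_{p_s})^o$ into the abelian variety $(\ker \mathrm{Nm}_{\epsilon})^o$, the Prym of the \'etale double cover $\epsilon \colon \tilde{D}_s \to D_s$ of smooth curves. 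Put $K := \ker(\tilde{n}^*|_{\mathcal{P}_s}) = \mathcal{P}_s \cap J^{aff}_{\tilde{\mathcal{C}}_s}$. Being a closed subgroup of the affine group $J^{aff}_{\tilde{\mathcal{C}}_s}$, $K$ is affine, so $K^o$ is a connected affine subgroup of $\mathcal{P}_s$ and hence $K^o \subseteq L'_s$, the Chevalley linear part of $\mathcal{P}_s$; conversely $L'_s$, being connected and affine, has trivial image in the abelian variety $J_{\tilde{D}_s}$, so $L'_s \subseteq K$ and, being connected, $L'_s \subseteq K^o$. Therefore $K^o = L'_s$. Passing to Tate modules, the sub-objects $T_\ell(\mathcal{P}_s)$ and $T_\ell(J^{aff}_{\tilde{\mathcal{C}}_s})$ of $T_\ell(\tilde{\mathcal{J}}_s)$ meet precisely in $T_\ell(\mathcal{P}_s \cap J^{aff}_{\tilde{\mathcal{C}}_s}) = T_\ell(K^o) = T_\ell(L'_s)$ (using exactness of $T_\ell(-)$ and its vanishing on the finite group $K/K^o$). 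Hence $N_s \cap T'_\ell = T_\ell(L'_s)$ is the Tate module of the linear part of $\mathcal{P}_s$, which is exactly the requirement in the definition of polarizability; so the restricted pairing is a polarization on $T'_\ell$.

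The main obstacle is the identification carried out in the second paragraph: pinning down the Chevalley linear part of the singular-curve Prym $\mathcal{P}_s$ as the intersection $\mathcal{P}_s \cap J^{aff}_{\tilde{\mathcal{C}}_s}$, and checking that forming $\overline{\bQ}_\ell$-Tate modules commutes with this intersection of subgroup schemes — which comes down to additivity of $T_\ell(-)$ applied to the short exact sequence $0 \to \mathcal{P}_s \cap J^{aff}_{\tilde{\mathcal{C}}_s} \to \mathcal{P}_s \times J^{aff}_{\tilde{\mathcal{C}}_s} \to \mathcal{P}_s + J^{aff}_{\tilde{\mathcal{C}}_s} \to 0$. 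One should also keep in mind that, $\tilde{\mathcal{C}}_s$ being singular (although integral), the $\iota^*$-eigenspace decomposition of $\tilde{\mathcal{J}}_s$ holds only up to isogeny at the level of group schemes; working throughout with identity components and $\overline{\bQ}_\ell$-coefficients is what makes this harmless. The remaining points — orthogonality of the Prym decomposition and the behaviour of the pairing under restriction — are formal once \cref{pol_jac} is available.
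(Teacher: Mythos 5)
Your proof is correct, but it follows a genuinely different route from the paper's. The paper works through the Chevalley d\'evissage: it notes that the pairing of \cref{pol_jac} kills $T^{\prime,aff}_{\ell,s}$, identifies the induced pairing on the abelian quotient with the Weil pairing of the normalized curve $\tilde D_s$ via \eqref{pairing_ab}, splits $T_\ell(J_{\tilde D_s})$ orthogonally as $\Ker(T_\ell(\mathrm{Nm}_\epsilon))\oplus \mathrm{Im}(T_\ell(\epsilon^*))$ using $\mathrm{Nm}_\epsilon\circ\epsilon^*=2\,\mathrm{Id}$ together with the classical adjointness of norm and pullback for Weil pairings of smooth curves, and then runs a diagram chase to identify $T^{\prime,ab}_{\ell,s}$ with $\Ker(T_\ell(\mathrm{Nm}_\epsilon))$. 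You instead decompose the full $T_\ell(\tilde{\mathcal J})$ into $\iota^*$-eigenspaces, use equivariance of Ng\^o's pairing under $\iota$ to get orthogonality of that splitting (so the radical of the restricted pairing is $N\cap T'_\ell$), and then compute $N_s\cap T'_{\ell,s}$ purely group-theoretically as the Tate module of the Chevalley linear part of $\mathcal P_s$; this trades the paper's d\'evissage-and-norm bookkeeping for a cleaner intersection argument, and your use of the involution is close in spirit to the alternative argument the paper itself sketches in the remark after the proof (via $\mathrm{Fix}(-\iota^*)^\circ$), though you exploit it differently. The one step you assert rather than prove is the $\iota^*$-invariance of the determinant-of-cohomology pairing on the possibly singular fibers $\tilde{\mathcal C}_s$; this is true and, if one prefers not to invoke naturality of Ng\^o's construction directly, it follows from ingredients already at hand: since the radical of the pairing is $T^{aff}_{\ell,s}$, which is $\iota^*$-stable, invariance may be checked on the abelian quotient, where by \eqref{pairing_ab} and the cartesian square \eqref{normalizations} it reduces to invariance of the cup-product pairing on $H^1_{\textrm{\'et}}(\tilde D_s)$ under the lifted involution, which is clear. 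With that point made explicit, your argument is complete.
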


\begin{proof}
We use the notation for relative Jacobians introduced in Definition \ref{jacobian} and the norm maps associated with the diagram \eqref{norm_comp}. Fix a closed point $s$ in $\mathcal{F}^0$ and consider the commutative diagram of exact sequences, whose rows are given by the Chevalley dévissages,
\begin{equation} \label{chevalley_diag}
\begin{tikzcd}
0 \arrow[r]& \mathcal{J}^{aff}_s \arrow[r]& \mathcal{J}_s \arrow[r, "n^*"] & \mathcal{J}_s^{ab} \arrow[r]& 0 \\
0 \arrow[r] & \tilde{\mathcal{J}}^{aff}_s \arrow[r] \arrow[u]& \tilde{\mathcal{J}}_s \arrow[r, "\tilde{n}^*"] \arrow[u, "\mbox{Nm}_{p_s}"]& \tilde{\mathcal{J}}_s^{ab} \arrow[r] \arrow[u, "\mbox{Nm}_{\epsilon}"]& 0 \\
0 \arrow[r]& \mathcal{P}^{aff}_s \arrow[r] \arrow[u, "u"]& \mathcal{P}_s \arrow[r] \arrow[u, "v"]& \mathcal{P}^{ab}_s \arrow[r] \arrow[u, "w"]& 0 \\
\end{tikzcd}
\end{equation}
and where the commutativity of the upper-right square comes from \eqref{norm_comp}. The arrow $v$ is the natural inclusion, and $u$ is an inclusion, too: it is induced by the fact that $\mathcal{P}^{aff}_s$ has to map to $\tilde{\mathcal{J}}^{aff}_s$, since the latter is the largest affine subgroup scheme of $\tilde{\mathcal{J}}_s$. The arrow $w$ is then induced by the commutativity of the lower-left square. Moreover, $\ker(w)$ is finite, since it is simultaneously a subgroup of the abelian variety $\mathcal{P}_s^{ab}$ and, by the snake lemma, of the affine $\mbox{coker}(u)$. This means that passing to the corresponding Tate modules, the lower rectangle yields a commutative diagram of exact sequences

\[
\begin{tikzcd} 
0 \arrow[r]& T^{aff}_{\ell,s} \arrow[r]& T_{\ell,s} \arrow[r] & T^{ab}_{\ell,s} \arrow[r]& 0 \\
0 \arrow[r] & T^{\prime,aff}_{\ell,s} \arrow[r] \arrow[u]& T^{\prime}_{\ell,s} \arrow[r] \arrow[u]& T^{\prime, ab}_{\ell,s} \arrow[r] \arrow[u]& 0
\end{tikzcd}
\]
where the vertical arrows are inclusions. 

Now look at the restriction to $T^\prime_{\ell}$ of the polarization $W_{\tilde{\mathcal{C}}}$ on $T_{\ell}$ obtained in Lemma \ref{pol_jac}. The above diagram shows that $W_{\tilde{\mathcal{C}}_s}$ vanishes on $T^{\prime, aff}_{\ell,s}$, since it vanishes on $T^{aff}_{\ell,s}$. Hence, we need to show that the induced pairing $W^{ab}_{\tilde{\mathcal{C}}_s}$ remains non-degenerate on $T^{\prime, ab}_{\ell,s}$. 
 In order to do so, observe that with notation as in diagram \eqref{normalizations}, we have 
  \[T^{ab}_{\ell,s} \simeq T_{\ell}(J_{\tilde{D}_s})\ ,\] 
  and by formula \eqref{pairing_ab}, the induced pairing $W^{ab}_{\tilde{\mathcal{C}}_s}$ coincides with the pairing $W_{\tilde{D}_s}$. Now because of the classical relation 
  \[ \mbox{Nm}_{\epsilon} \circ \epsilon^* = 2 \cdot \mbox{Id}_{J_{D_s}}\ ,\] we have
\begin{equation}\label{ds}
T_{\ell}(J_{\tilde{D}_s}) \simeq \Ker(T_{\ell}(\mbox{Nm}_{\epsilon})) \oplus \mbox{Im}(T_{\ell}(\epsilon^*))\ .
\end{equation}
Moreover, the morphisms $\mbox{Nm}_{\epsilon}$ and $\epsilon^*$ are adjoint with respect to the bilinear forms $W_{\tilde{D}_s}$ and $W_{D_s}$ (\cite[page 186, equation (I)]{Mum70}, \cite[page 331, equation (2)]{BL04}), so that the direct sum \eqref{ds} is orthogonal with respect to $W_{\tilde{D}_s}$. The restriction of $W_{\tilde{D}_b}$ to $\Ker(T_{\ell}(\mbox{Nm}_{\epsilon}))$ thus remains non-degenerate. To get what we want, it will be enough to prove that $\Ker(T_{\ell}(\mbox{Nm}_{\epsilon}))$ coincides with $T^{\prime, ab}_{\ell,s}$. It is clear by the commutativity of \eqref{chevalley_diag} that $T^{\prime, ab}_{\ell,s}$ is included in $\Ker(T_{\ell}(\mbox{Nm}_{\epsilon}))$, and we will conclude by showing that the group scheme $\ker(\mbox{Nm}_{\epsilon}) / (\mathcal{P}_s^{ab} / \ker(w))$ is finite. 

For this, notice first that the upper-right square of \eqref{chevalley_diag} provides a surjection
\[
\ker(\mbox{Nm}_{\epsilon} \circ \tilde{n}^*) \twoheadrightarrow \ker(\mbox{Nm}_{\epsilon})
\]
hence, using the commutativity of that same square, 
a surjection
\[
\ker(n^* \circ \mbox{Nm}_{p_s}) \twoheadrightarrow \ker(\mbox{Nm}_{\epsilon}) / (\mathcal{P}_s^{ab} / \ker(w))
\]
which vanishes on $\ker(\mbox{Nm}_{p_s})$, 
thus yielding a surjection
\[
\ker(n^* \circ \mbox{Nm}_{p_s}) / \ker(\mbox{Nm}_{p_s}) \twoheadrightarrow \ker(\mbox{Nm}_{\epsilon}) / (\mathcal{P}_s^{ab} / \ker(w))\ .
\]
On the other hand, by definition, the map $n^*$ vanishes on the image of the  restriction of $\mbox{Nm}_{p_s}$ to $\ker(n^* \circ \mbox{Nm}_{p_s})$. By exactness of the upper row of \eqref{chevalley_diag}, $\mbox{Nm}_{p_s}$ thus induces a monomorphism
\[
\ker(n^* \circ \mbox{Nm}_{p_s}) / \ker(\mbox{Nm}_{p_s}) \hookrightarrow \mathcal{J}_s^{aff}\ ,
\]
which shows that the group scheme $\ker(n^* \circ \mbox{Nm}_{p_s}) / \ker(\mbox{Nm}_{p_s})$ is affine. Since it surjects onto $\ker(\mbox{Nm}_{\epsilon}) / (\mathcal{P}_s^{ab} / \ker(w))$, the latter cannot have non-zero dimension, because otherwise we would get a non-trivial homomorphism from an affine group scheme to an abelian variety. 
\end{proof}

\begin{remark}
\begin{enumerate}
\item The use of the relations between the morphisms $\mbox{Nm}_{\epsilon}$ and $\epsilon^*$ in the above proof is inspired by the strategy of the polarizability proof of \cite[Lemma 4.7.1 and Thm. 4.7.2]{DeC17}, in the context of the Hitchin fibration.
\item The final diagram chase in the previous proof, showing that the group scheme $\ker(\mbox{Nm}_{\epsilon}) / (\mathcal{P}_s^{ab} / \ker(w))$ is finite, could be replaced by a more direct argument, using the involution $\iota$ on $\tilde{\mathcal{C}}$ and using that $\ker \mbox{Nm}_p$ coincides with the identity component $Fix(-\iota^*)^\circ$. In fact, the short exact sequence $0 \to \mathcal{J}^{aff}_s \to \mathcal{J}_s \to  \mathcal{J}_s^{ab} \to 0$ is compatible with the involution $\tau=-i^*$ and the corresponding involution on $\widetilde{D}$. Using the fact that abelian varieties are divisible groups, one can easily check that the short exact sequence induces a short exact sequence of identity component of fixed loci. In particular, $\mathcal{P}_s^{ab}=\ker \mbox{Nm}_{\epsilon}$ and $\ker w=0$.
\end{enumerate}
\end{remark}

We will deduce Lemma \ref{polarizability} from  \cref{restr_pol} by  showing that we can descend to $B$ the polarization over $\mathcal{F}^0$ constructed above. For this, we will need a classical fact whose proof we recall for the convenience of the reader. 

\begin{lemma} \label{ff_pullback}
Let $\pi:Z \rightarrow B$ be a smooth surjective morphism with connected fibers. Then the functor $\pi^*$ on constructible $\ell$-adic sheaves is fully faithful. 
\end{lemma}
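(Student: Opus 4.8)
The plan is to reduce full faithfulness of $\pi^*$ to a statement about the unit of the adjunction $(\pi^*,\pi_*)$, and then to prove that statement after an étale base change on $B$. Since $\pi$ is a morphism of schemes, $\pi^*$ is exact and left adjoint to $\pi_*=R^0\pi_*$, so $\Hom(\pi^*F,\pi^*G)=\Hom(F,\pi_*\pi^*G)$ for all constructible $F,G$ on $B$; hence it suffices to prove that the unit $G\to\pi_*\pi^*G$ is an isomorphism for every constructible $G$ (and then to check routinely that the resulting bijection $\Hom(F,G)\cong\Hom(\pi^*F,\pi^*G)$ is the one induced by $\pi^*$). Being an isomorphism of sheaves is étale local on $B$; moreover, for an étale morphism $u\colon B'\to B$ one has $u^*\pi_*=\pi'_*u'^*$, where $\pi'\colon Z\times_BB'\to B'$ and $u'\colon Z\times_BB'\to Z$ are the base changes of $\pi$ and $u$, compatibly with the unit, and connected fibres stay connected after base change (over $\bC$ ``connected'' and ``geometrically connected'' fibres coincide, which is what is used below). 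Since $\pi$ is smooth and surjective it has a section étale locally on $B$, so I may assume $\pi$ admits a section $\sigma\colon B\to Z$.

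Granting the section, I would show that $\Gamma(V,G)\to\Gamma(Z_V,\pi^*G)$ is bijective for every étale $V\to B$, where $Z_V:=Z\times_BV$; this is precisely the assertion that $G\to\pi_*\pi^*G$ is an isomorphism. Injectivity follows from surjectivity of $\pi$ (which makes $\pi^*$ faithful). For surjectivity I would take $\xi\in\Gamma(Z_V,\pi^*G)$ and set $\eta:=\xi-\pi^*(\sigma^*\xi)$, a section of $\pi^*G$ vanishing along the image of $\sigma$ over $V$; the goal is $\eta=0$. Fix a finite stratification of $B$ into locally closed subsets on which $G$ is locally constant. Over the preimage in $Z_V$ of each stratum the sheaf $\pi^*G$ is locally constant, so the vanishing locus of $\eta$ is open and closed there; it contains the image of $\sigma$ over the stratum, which meets every fibre of $\pi$ over it, and those fibres are connected, so the vanishing locus contains each such fibre. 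Letting the stratum vary yields $\eta=0$, i.e.\ $\xi=\pi^*(\sigma^*\xi)$.

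The only step that is not purely formal is this last one --- controlling a section of the (in general not locally constant) sheaf $\pi^*G$ through the section of $\pi$, the chosen stratification, and the connectedness of the fibres --- so that is where I expect the actual work to sit. Everything else is standard: the adjunction reformulation, the base change identity $u^*\pi_*=\pi'_*u'^*$ for $u$ étale, the existence of sections of smooth surjective morphisms étale locally on the target, and the local constancy of a constructible sheaf along a suitable stratification. (Alternatively one can avoid the étale-local reduction and test the unit on stalks at the geometric points $\bar b$ of $B$: by continuity of $H^0$, $(\pi_*\pi^*G)_{\bar b}=\Gamma\bigl(Z\times_B\Spec\mathcal{O}^{\mathrm{sh}}_{B,\bar b},\,\pi^*G\bigr)$; a smooth surjective morphism over the strictly henselian local base $\Spec\mathcal{O}^{\mathrm{sh}}_{B,\bar b}$ again admits a section, and the same stratification argument applies.)
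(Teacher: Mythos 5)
Your argument is correct, but it follows a genuinely different route from the paper's. You reduce full faithfulness to the unit $G\to\pi_*\pi^*G$ being an isomorphism via the ordinary adjunction $(\pi^*,\pi_*)$, then prove this after passing to an étale-local section of $\pi$ and running an elementary stratification-plus-connectedness argument (the vanishing locus of a section of a locally constant sheaf is open and closed, and it sweeps out each connected fibre from the point where the section of $\pi$ meets it). The paper instead stays inside the six-functor formalism: it rewrites $\Hom_Z(\pi^*F,\pi^*G)$ via purity $R\pi^!\simeq\pi^*(\delta)[2\delta]$ and the adjunction $(R\pi_!,R\pi^!)$, computes $R\pi_!R\pi^!F$ fibrewise by base change, notes the complex sits in degrees $[-2\delta,0]$, and concludes with the trace isomorphism $H^{2\delta}_c(Z_b,F_b)(\delta)\simeq F_b$ for smooth connected non-empty fibres. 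Your approach is more elementary (no duality, no trace map, no proper base change, only étale localization and the existence of sections of smooth surjective morphisms), at the cost of a hands-on argument with sections; the paper's is shorter given the derived machinery it already uses elsewhere. Two small points you should tighten: (i) the claim that connected fibres stay connected under the étale base change and at non-closed points is not quite the triviality your parenthesis suggests — one should either read the hypothesis as geometric connectedness of fibres, or invoke constructibility of the geometrically-connected locus (EGA IV 9.7.8) together with the Jacobson property over $\bC$, since closed fibres alone are over algebraically closed residue fields; (ii) for $\overline{\bQ}_\ell$-coefficients your section-level manipulations should be understood at finite level ($\bZ/\ell^n$-sheaves) and passed to the limit, a standard step that both you and the paper leave implicit.
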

\begin{proof}
We have canonical isomorphisms
\begin{equation} \label{adj}
\Hom_{Z}(\pi^* F,\pi^* G) \simeq \Hom_{Z}(R\pi^! F,R\pi^! G) \simeq \Hom_{B}(R\pi_! R\pi^!F, G)\ ,
\end{equation}
where the first isomorphism follows from the smoothness of $\pi$, and the second one is the adjunction isomorphism. Now, for any point $i_b : b \hookrightarrow B$, by looking at the cartesian square 
\[
\begin{tikzcd} 
Z \arrow[d, "\pi"] & \arrow[l, "i^\prime_b"] Z_b \arrow[d, "\pi_b"] \\
B & \arrow[l, "i_b"] b
\end{tikzcd}
\]
and by applying the base change isomorphism, we have
\begin{equation} \label{bchange}
i_b^* R\pi_! R\pi^!F \simeq R\pi_{b,!} \pi_b^! F_b \simeq R\pi_{b,!} \pi_b^* F_b(\delta) [2 \delta]\ ,
\end{equation}
where $F_b$ denotes the stalk of $F$ at $b$, the second isomorphism comes from the smoothness of $\pi_b$, and $\delta$ is the relative dimension of $\pi$. Observe that
\begin{equation} \label{compsupp}
R\pi_{b,!} \pi_b^* F_b \simeq R \Gamma_c (Z_b, F_b)
\end{equation}
where $\Gamma_c$ is the compactly supported global sections functor, and by our hypotheses, $Z_b$ is smooth and connected of dimension $\delta$, for any $b$. As a consequence, the complex $i_b^* R\pi_! R\pi^!F$ is concentrated in degrees $[-2 \delta, \dots, 0]$ for any $b$, hence so is $R\pi_! R\pi^!F$. Thus, as $G$ is concentrated in degree zero, 
\begin{equation} \label{H0adj}
\Hom_{B}(R\pi_! R\pi^!F, G) \simeq \Hom_{B}(H^0 (R\pi_! R\pi^!F), G)\ .
\end{equation}
Then, our conclusion will follow as soon as we will prove that the adjunction morphism 
\begin{equation} \label{trace}
H^0 (R\pi_! R\pi^!F) \rightarrow F
\end{equation}
is an isomorphism (for then, \eqref{adj} and \eqref{H0adj} will yield an isomorphism 
\[
\Hom_{Z}(\pi^* F,\pi^* G) \simeq  \Hom_B(F,G)
\]
which provides an inverse of the morphism induced by $\pi^*$). For any $b$, the morphism \eqref{trace} induces a morphism between stalks at $b$, that by \eqref{bchange} and \eqref{compsupp} takes the form
\[
H^0 \left( R \Gamma_c (Z_b, F_b(\delta))[2 \delta] \right)= H_c^{2 \delta} (Z_b, F_b)(\delta) \rightarrow F_b
\]
By \cite[Exp. XVIII, Thm. 3.2.5]{SGA4}, this morphism coincides with the trace morphism on top-dimensional compactly supported cohomology. Since by our hypotheses on $\pi$, the fiber $Z_b$ is smooth, connected and non-empty for all $b$, the trace morphism is an isomorphism for all $b$.
\end{proof}

\begin{proof}[Proof of \cref{polarizability}]

The group scheme $P$ is a descent to $B$ of the relative Prym $\mathcal{P}$ over $\mathcal{F}^0$. We will show that the polarization defined in Lemma \ref{restr_pol} descends to a polarization of the Tate module of $P \rightarrow B$. 

Denote by $W^\prime$ the polarization on the Tate module $T_{\ell}(\mathcal{P})$ of $\mathcal{P}$ constructed in Lemma \ref{restr_pol}. 
Consider the cartesian square 
\[
\begin{tikzcd} 
\mathcal{P} \arrow[r, "g^\prime"] \arrow[d] & \mathcal{F}^0 \arrow[d, "\pi"] \\
P \arrow[r, "g"] & B
\end{tikzcd}
\]
If $S$ is a base scheme and $F,G$ are two constructible $\ell$-adic complexes on $S$, denote $\Hom_S(F,G):=\Hom_{D^b(S)}(F,G)$.
Observe then that by the definition of $T_{\ell}(\mathcal{P})$, the pairing $W^\prime$ is an element of
\[
\Hom_{\mathcal{F}^0}\bigl(R^{2d-1} g^\prime_!\overline{\mathbb{Q}}_{\ell,\mathcal{P}}(d) \otimes R^{2d-1} g^\prime_!\overline{\mathbb{Q}}_{\ell,\mathcal{P}}(d), \overline{\mathbb{Q}}_{\ell,\mathcal{F}^0}(1)\bigr)\ .
\]
By applying the base change isomorphism and the monoidality of pullback, we see that the above space of morphisms actually coincides with
\[
\Hom_{\mathcal{F}^0}\bigl(\pi^* \left( R^{2d-1} g_!\overline{\mathbb{Q}}_{\ell,P}(d) \otimes R^{2d-1} g_!\overline{\mathbb{Q}}_{\ell,P}(d) \right), \pi^* \overline{\mathbb{Q}}_{\ell,B}(1)\bigr)\ .
\]
But since the morphism $\pi$ is surjective and smooth (\cite[Section 5, page 110]{LSV17}) and has connected fibers (as follows from \cite[Lemma 2.5 (ii)]{LSV17}), Lemma \ref{ff_pullback} implies that the functor $\pi^*$ between constructible sheaves is fully faithful. 
In other words, for any two constructible sheaves $F$ and $G$ on $B$, the functor $\pi^*$ induces an isomorphism
\begin{equation} 
\Hom_B(F,G) \stackrel{\pi^*}{\simeq} \Hom_{\mathcal{F}^0}(\pi^* F,\pi^* G) \ .
\end{equation}
This tells us that $W^\prime$ arises from a pairing 
\[
W \in \Hom_{B}\bigl(R^{2d-1} g_!\overline{\mathbb{Q}}_{\ell,P}(d) \otimes R^{2d-1} g_!\overline{\mathbb{Q}}_{\ell,P}(d), \overline{\mathbb{Q}}_{\ell,B}(1)\bigr)\ .
\]
It remains to check that $W$ is a polarization. 
Take $b \in B$ a closed point and $s \in \mathcal{F}^0$ a closed point in the fiber $\mathcal{F}^0_b$ over $b$. 
We already know that the null space of the pairing
\[
W_s^\prime \in \Hom(T_{\ell}(\mathcal{P})_{s} \otimes T_{\ell}(\mathcal{P})_{s}, \overline{\mathbb{Q}}_{\ell}(1))
\]
is precisely the Tate module $T_{\ell}(P^{\prime,aff})_{s}$ of the linear part of $P_s^\prime$, and we want to deduce the same for  
\[
W_b \in \Hom\bigl(T_{\ell}(P)_{b} \otimes T_{\ell}(P)_{b}, \overline{\mathbb{Q}}_{\ell}(1)\bigr)\ .
\]
But observe that the natural isomorphism of fibers $P_b  \simeq \mathcal{P}_s$ induces a natural isomorphism
\[
\Hom\bigl(T_{\ell}(P)_{b} \otimes T_{\ell}(P)_{b}, \overline{\mathbb{Q}}_{\ell}(1)\bigr) \stackrel{I}{\simeq} \Hom\bigl(T_{\ell}(\mathcal{P})_{s} \otimes T_{\ell}(\mathcal{P})_{s}, \overline{\mathbb{Q}}_{\ell}(1)\bigr)\ ,
\]
and we claim that $W_b$ maps to $W_s^\prime$ under this isomorphism, thus concluding the proof.

 In fact, for any two constructible sheaves $F,G$ on $B$, the cartesian square
\[
\begin{tikzcd} 
\mathcal{F}^0 \arrow[d, "\pi"] & \arrow[l, "i^\prime_b"] \mathcal{F}^0_b \arrow[d, "\pi_b"] \\
B & \arrow[l, "i_b"] b
\end{tikzcd}
\]
yields a commutative diagram, whose subdiagrams all commute,
\[
\begin{tikzcd}
\Hom_B(F,G) \arrow[d, "\pi^*"] \arrow[r, "i^*_b"] & \Hom(F_b,G_b)  \arrow[d,"\pi^*_b"]  \arrow[dr, "I"] & \\
\Hom_{\mathcal{F}^0}(\pi^* F,\pi^* G) \arrow[r, "i_b^{\prime,*}"] & 
\Hom_{\mathcal{F}_b^0}(\pi_b^* F_b,\pi_b^* G_b) \arrow[r, "\iota"] & \Hom((\pi^*F)_s, (\pi^*G)_s)
\end{tikzcd}
\]
Here the vertical arrows are isomorphisms by fully faithfulness, and $\iota$ is pullback to $s \in \mathcal{F}_b^0$, also an isomorphism since the sheaves $\pi_b^*F_b$ and $\pi_b^*G_b$ are constant of stalk $F_b \simeq (\pi^*F)_s$ resp. $G_b \simeq (\pi^*G)_s$ on the fiber $\mathcal{F}_b^0$. Now, setting $F=T_{\ell}(P) \otimes T_{\ell}(P)$ and $G=\overline{\mathbb{Q}}_{\ell,B}$, this implies that $I$ maps $W_b$ to the image under $\iota \circ i^{\prime,*}_b$ of $W^\prime = \pi^* W$. The latter being precisely $W^\prime_s$, this proves our claim. 
\end{proof}

\begin{remark} 
The paper \cite{AF}, which appeared subsequently to this work, proved a polarizability result in  greater generality, implying in particular  \cref{polarizability}. The proof we present here is more geometric   and we believe of independent interest.
\end{remark}

\subsection{Application of Theorem \ref{main} to LSV tenfolds}

 In this subsection we show \cref{thm LSV}, which is an extension of  \cref{thm LSV intro} from the Introduction.
\begin{theorem}\label{thm LSV}
Let $X$ be a LSV tenfold, respectively $X^t$  its twisted version. Then the standard conjectures hold true for $X$, respectively for $X^t$. 
\end{theorem}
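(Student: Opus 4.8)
The plan is to deduce the statement from Theorem \ref{main}, after a specialization argument reducing us to general (twisted) LSV tenfolds. By the construction recalled above, every Lagrangian fibered HK compactification of $J_U$ (resp.\ of the twisted fibration $J_U^t$) associated with a smooth cubic fourfold $M$ is a specialization of a general LSV tenfold (resp.\ of a twisted general LSV tenfold); moreover, for a fixed non-general $M$ all such compactifications are birational, hence have isomorphic motives by Remark \ref{rem huy}. Since the Lefschetz standard conjecture for a variety implies it for all of its specializations (as recalled in the introduction), and since over $\bC$ the Lefschetz standard conjecture implies all the standard conjectures, it is enough to prove the theorem when $X$ (resp.\ $X^t$) is a general LSV tenfold.

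Next I would verify the four hypotheses of Theorem \ref{main} for a general LSV tenfold $X$; the twisted case is identical, with Remark \ref{Ngo twisted} taking the place of Proposition \ref{LSV_Ngo} below. For hypothesis (1): the Picard rank of a general LSV tenfold equals $2$, the Picard group being spanned over $\bQ$ by the relatively ample class $\eta$ and the Lagrangian class $\beta$; this follows from the description of $H^2(X,\bZ)$ in \cite{LSV17}, whose transcendental lattice, for general $M$, is the primitive fourth cohomology of $M$ (using that a general cubic fourfold has no Hodge classes in $H^4$ other than the powers of the hyperplane class), leaving a Picard group of rank $2$. Hypothesis (2) holds by Proposition \ref{SYZ holds}, since $X$ is of OG10-type. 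For hypotheses (3) and (4): the Lagrangian fibration $f\colon X\to B$ is an Ng\^o fibration with integral fibers by Proposition \ref{LSV_Ngo}; as for the remaining birational models, the Picard rank being $2$ puts us in one of the two cases of Remark \ref{trick voisin}. If both isotropic classes lie on the boundary of the birational K\"ahler cone, the second Lagrangian fibration furnished by the SYZ conjecture is again of (possibly twisted) LSV type, hence again an Ng\^o fibration with integral fibers by Proposition \ref{LSV_Ngo} and Remark \ref{Ngo twisted} (and stably so, by the deformation argument of Proposition \ref{extension}); if only one isotropic class lies on that boundary, no birational model carries a Lagrangian fibration other than $f$, and hypotheses (3)--(4) amount to the Ng\^o property of $f$ already established. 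Theorem \ref{main} then yields the Lefschetz standard conjecture, and hence all the standard conjectures, for $X$ and for $X^t$.

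The main obstacle, apart from Proposition \ref{LSV_Ngo} itself (whose proof occupies the bulk of Section \ref{S:LSV}), is hypothesis (3): one must control \emph{every} Lagrangian fibration on \emph{every} birational model of a general LSV tenfold, which requires a precise enough grip on the birational K\"ahler cone of $X$ to identify the possible second Lagrangian fibration and recognize it as an LSV-type fibration, so that Proposition \ref{LSV_Ngo} and Remark \ref{Ngo twisted} apply. By contrast, I expect the computation of the Picard rank of a general LSV tenfold to be routine period-map bookkeeping, and the specialization step to be formal.
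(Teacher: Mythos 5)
Your overall architecture (specialize to the general LSV tenfold, then check the four hypotheses of Theorem \ref{main}, with Remark \ref{Ngo twisted} covering the twisted case) matches the paper, and your treatment of hypotheses (1), (2) and of the specialization step is fine. The genuine gap is exactly the point you flag as the ``main obstacle'' for hypothesis (3) and then do not resolve: you leave open which of the two cases of Remark \ref{trick voisin} occurs, and in the first case (both isotropic classes on the boundary of the birational K\"ahler cone) your argument rests on the unsubstantiated claim that the second Lagrangian fibration produced by SYZ would again be ``of (possibly twisted) LSV type,'' so that Proposition \ref{LSV_Ngo} and Remark \ref{Ngo twisted} would apply to it. Nothing in the construction supports this: a second fibration on a birational model need not have base $(\mathbb{P}^5)^{\vee}$ nor fibers that are (torsors over) intermediate Jacobians of hyperplane sections of $M$, and in particular the twisted LSV tenfold is not a birational model of $X$ carrying the putative second fibration, so Remark \ref{Ngo twisted} is not available for this purpose. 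As stated, your case (1) branch is a gap, not a proof.

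The paper closes this branch by showing it simply does not occur: a general LSV tenfold carries an effective divisor $\Theta$ with $q(\Theta)<0$, and by \cite[Corollary 3.10]{Sacca} it has only one primitive Lagrangian class, so one is in the \emph{second} case of Remark \ref{trick voisin}; for the twisted version the same conclusion follows from \cite[Theorem 1.3]{LPZ}, which exhibits $X^t$ as birational to a divisorial symplectic resolution, so the second boundary ray gives a divisorial contraction and not a Lagrangian fibration. Once this is known, hypotheses (3)--(4) reduce to the Ng\^o property with integral fibers of the single fibration $X\to B$ (resp.\ its twisted analogue), which is Proposition \ref{LSV_Ngo} and Remark \ref{Ngo twisted} --- precisely the part of your case (2) branch that is correct. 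So to repair your proof you need the input ruling out a second Lagrangian fibration (existence of the prime exceptional divisor, resp.\ the LPZ divisorial resolution), rather than any control of a hypothetical second fibration.
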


\begin{proof} Any given LSV tenfold $X$ is part of a family, where the general member is a general LSV tenfold $X^\prime$ with Picard number 2 (this is just because the cubic fourfold $M$ defining $X$ can be deformed to a cubic fourfold $M^\prime$ which is general in the sense of \cite{LSV17} and with $h^{2,2}(M^\prime)=1$). Via a specialization argument, one is then reduced to proving the theorem for $X^\prime$. To this end, let us check that the assumptions of Theorem \ref{main} are satisfied by $X^\prime$. (The analogous specialization argument holds for the twisted version.)

Point (1) of Theorem \ref{main} is clear, and point (2) follows from Proposition \ref{SYZ holds}. As for point (3), it is known that $X^\prime$ has only one primitive Lagrangian class (\cite[Corollary 3.10]{Sacca}, which follows from the existence of an effective divisor $\Theta$ with $q(\Theta)<0$), and so we are in the second case of Remark \ref{trick voisin}. The same holds for the twisted version, using the fact that by   \cite[Theorem 1.3]{LPZ}, $X^t$ is birational to a divisorial symplectic resolution, hence the second ray of the birational K\"ahler cone determines a divisorial contraction and not a Lagrangian fibration. But then, in both cases, point (3) reduces to checking that the Lagrangian fibration $X^\prime\to B$ described in \cite{LSV17} (or its twisted analogue) is an Ng\^o fibration; this is the content of Proposition \ref{LSV_Ngo} and \cref{Ngo twisted}.
\end{proof}

\begin{remark}
As explained in the proof, an LSV tenfold $X$ with Picard number 2 falls into the second case of Remark \ref{trick voisin}, so that there is only one Lagrangian fibration to study, namely the one naturally provided by the construction of $X$. 
\end{remark}

%

 \begin{remark} 
The following more direct approach is possible to prove the standard conjectures for twisted intermediate Jacobian fibrations.

Let $Y=X^t$ be the twisted intermediate Jacobian fibration associated to a very general cubic fourfold $Z\subset\bP^5$. As mentioned earlier, it is shown in \cite[Theorem 1.3 and Remark 6.9]{LPZ} that
$Y$ is birational to the HK variety $\tilde{M}$ obtained as a symplectic resolution of a certain moduli space of semi-stable objects in the Kuznetsov category of $Z$.
Applying \cite[Theorem 1.8]{FFZ}, the Chow motive of $\tilde{M}$ is in the subcategory generated by the motive of $Z$. As birational HK varieties have isomorphic Chow motives, it follows that there is an inclusion of (Chow and hence also of homological) motives
 \begin{equation}\label{submotive}  \mathfrak{h}(Y)\cong \mathfrak{h}(\tilde{M})\ \hookrightarrow\ \bigoplus \mathfrak{h}(Z^r)(\ast)\ \ \hbox{in}\   \Mot_{} \ .\end{equation}
In particular, $Y$ verifies the standard conjectures. 

The same remark applies to the HK varieties of OG6 type constructed as symplectic resolutions of moduli spaces of semi-stable sheaves on an abelian surface $A$ in \cite{OGrady03}. As shown in \cite{Wu}, these HK varieties $Y$ admit a Lagrangian fibration that is an Ng\^o fibration (with possibly reducible fibers), and so perhaps a stratified version of our argument might apply. However, it is known that the Chow motive of $Y$ is in the subcategory generated by the motive of $A$ \cite{Floc}, and so in particular $Y$ verifies the standard conjectures.

On the other hand, we observe that this line of reasoning does {\em not\/} apply to the LSV tenfolds. Indeed, it is known that the very general LSV tenfold is {\em not\/} birational to a moduli space of stable objects in the Kuznetsov category of $Z$ \cite[Corollary 4.2]{Sacca} (cf. also \cite[Remark 4.3]{Sacca}).
Nevertheless, one may wonder whether the inclusion \eqref{submotive} might still be true for the general LSV tenfold.
\end{remark}

\bibliographystyle{alpha}
\bibliography{MyLibrary}

\Addresses

\end{document}
 \begin{proposition}\label{robert trick}
  Let $X$ be a HK variety which admits a Lagrangian fibration $f:X \rightarrow B$.
Assume that:
   \begin{enumerate}
   \item $X$ is of OG10-type.
\item The Picard rank of $X$ is $2$.

\end{enumerate}
Then there exists a HK variety $X^\prime$, birational to $X$ over $B$ and admitting a unique Lagrangian fibration and a unique lagrangian class (\cref{def lagrangian}).
   \end{proposition}

\begin{proof}
 This is contained (more or less explicitly) in \cite[Section 3]{quartet}. Since $X$ is of OG10-type, \cite[Example 3.5]{quartet} implies that the numerical assumptions of \cite[Theorem 3.1]{quartet} are satisfied by $X$. Then, as explained in \cite[Proof of Theorem 3.1]{quartet}, the work of Matsushita \cite{Matsushita} gives a HK variety $X^\prime$, birational to $X$ over $B$, and equipped with a line bundle $M$ having certain numerical properties. Next, applying \cite[Section 3.2]{quartet} to $X^\prime$ and $M$, we find that $X^\prime$ has a prime exceptional divisor, and so  $X^\prime$ has a unique Lagrangian fibration (see \cref{trick voisin}).
\end{proof}
\GA{modify biration X' to X. Erase Lemma 10.2}
 \begin{lemma}\label{bir invariant} Let $f\colon X\to B$, $f^\prime\colon X^\prime\to B$ be two HK varieties of Picard number 2 with a Lagrangian fibration. Assume $X$ and $X^\prime$ are birational over $B$. Then
 $X$ verifies the relative Lefschetz standard conjecture if and only if $X^\prime$ does so.
 \end{lemma}

 \begin{proof} Let $\beta, \eta\in H^2(X)$ be the two isotropic classes as above, and let $\beta^\prime, \eta^\prime\in H^2(X^\prime)$ be the similarly defined isotropic classes on $X^\prime$. Recall that the natural isomorphism
 $H^2(X)\cong H^2(X^\prime)$ respects the underlined quadratic forms.  Hence it sends the pair $\{\bQ[\beta],\bQ[\eta]\}$ to the pair $\{\bQ[\beta^\prime],\bQ[\eta^\prime]\}$, as those are the isotropic lines.  
 Since $X$ and $X^\prime$ are birational over $B$, the class $\beta=f^\ast {\mathcal O}_B(1)$ is sent to a multiple of $\beta^\prime$. Hence, the class $\eta$ is sent to a multiple of $\eta^\prime$.
 
 Let us assume $X$ verifies the relative Lefschetz standard conjecture. Then, by Proposition \ref{motivic good decomposition}, the good decomposition for the pair $(f,\eta)$ given by Theorem \ref{thm SY} is motivic. As again the isomorphism $H^\ast(X)\cong H^\ast(X^\prime)$ is an algebra isomorphism, the good decomposition for the pair $(f,\eta)$ induces the   good decomposition for  $(f^\prime,\eta^\prime)$. In particular, as the former is motivic and selfdual, the latter is so.
   Since $\eta^\prime$ is relatively ample with respect to $f^\prime$, this means that $(X^\prime, f^\prime)$ verifies the relative Lefschetz standard conjecture. 
 \end{proof}

   \begin{theorem}\label{thm robert} 
   Let $X$ be a HK variety admitting a Lagrangian fibration $f:X \rightarrow B$.
Assume that:
   \begin{enumerate}
   \item $X$ is of OG10-type.
\item The Picard rank of $X$ is $2$.
\item The fibration $f$ is an Ng\^o fibration (see Definition \ref{def ngo}).
\item The fibers of $f$ are irreducible.
\end{enumerate}
Then the Lefschetz standard conjecture holds for $X$.  
  \end{theorem}
  
  \begin{proof} By Proposition \ref{robert trick} there exists a HK variety $X^\prime$, birational to $X$ over $B$ and admitting a unique Lagrangian class. Now, by virtue of the last two assumptions $f:X\to B$ verifies the relative Lefschetz standard conjecture (cf. Proposition \ref{relative SCLT HK}).
 Applying Lemma \ref{bir invariant}, it follows that $f^\prime : X^\prime\to B$ also verifies the relative Lefschetz standard conjecture. As $X^\prime$ has only one Lagrangian fibration,  we are in the second case of Remark \ref{trick voisin}, hence by \cref{main}  $X^\prime$ verifies the Lefschetz standard conjecture. The truth of the Lefschetz standard conjecture being a birational invariant among HK varieties, see \cref{rem huy}, it follows that $X$ also verifies the Lefschetz standard conjecture.
 \end{proof}